\documentclass[[a4paper,UKenglish,cleveref, autoref, thm-restate]{lipics-v2021}
\bibliographystyle{abbrv}

\usepackage{xcolor}
\usepackage{extarrows}
\usepackage{amsthm}
\usepackage{amsmath}
\usepackage{amssymb}
\usepackage{amsfonts}
\usepackage{graphicx}
\usepackage[ruled]{algorithm2e}
\usepackage{mathtools}

\allowdisplaybreaks
\nolinenumbers

\newtheorem{thrm}{Theorem}[section]
\newtheorem{lem}[thrm]{Lemma}
\newtheorem{prop}[thrm]{Proposition}

\theoremstyle{definition}
\newtheorem{defn}[thrm]{Definition}
\theoremstyle{definition}

\theoremstyle{definition}

\theoremstyle{definition}

\theoremstyle{definition}
\newtheorem{fct}[thrm]{Fact}

\newcounter{ProblemCounter}

\newcommand{\sgmG}{\langle \mathcal{G} \rangle}
\newcommand{\Z}{\mathbb{Z}}
\newcommand{\N}{\mathbb{N}}
\newcommand{\Q}{\mathbb{Q}}
\newcommand{\Rpp}{\mathbb{R}_{> 0}}
\newcommand{\Rp}{\mathbb{R}_{\geq 0}}

\newcommand{\A}{\mathbb{A}}

\newcommand{\R}{\mathbb{R}}

\newcommand{\Zpp}{\mathbb{Z}_{>0}}

\newcommand{\mG}{\mathcal{G}}

\newcommand{\hG}{\widehat{\mathcal{G}}}
\newcommand{\mS}{\mathcal{S}}
\newcommand{\mC}{\mathcal{C}}
\newcommand{\mM}{\mathcal{M}}
\newcommand{\mH}{\mathcal{H}}

\newcommand{\bg}{\boldsymbol{g}}
\newcommand{\balpha}{\boldsymbol{\alpha}}

\newcommand{\sgn}{\operatorname{sgn}}

\newcommand{\RXs}{\mathbb{R}_{\geq 0}[X^{\pm}]^*}
\newcommand{\NXds}{\mathbb{N}[X^{\pm d}]^*}
\newcommand{\RX}{\mathbb{R}[X^{\pm}]}
\newcommand{\bff}{\boldsymbol{f}}
\newcommand{\lc}{\operatorname{lc}}
\newcommand{\hw}{\widehat{w}}
\newcommand{\hy}{\widehat{y}}
\newcommand{\hb}{\widehat{b}}

\newcommand{\init}{\operatorname{in}}
\newcommand{\coef}{\operatorname{coef}}

\newcommand{\SL}{\mathsf{SL}}

\title{The Identity Problem in $\mathbb{Z} \wr \mathbb{Z}$ is decidable}
\author{Ruiwen Dong}{Department of Computer Science, University of Oxford}{ruiwen.dong@kellogg.ox.ac.uk}{}{}

\authorrunning{R. Dong}

\Copyright{Ruiwen Dong} 

\ccsdesc[500]{Computing methodologies~Symbolic and algebraic manipulation} 

\keywords{wreath product, algorithmic group theory, identity problem, polynomial semiring, positive coefficients} 

\category{} 

\relatedversion{}

\supplement{}

\acknowledgements{The author would like to thank Markus Schweighofer and David Sawall for useful discussions. The author acknowledges support from UKRI Frontier Research Grant EP/X033813/1.}

\begin{document}

\maketitle
\begin{abstract}
We consider semigroup algorithmic problems in the wreath product $\mathbb{Z} \wr \mathbb{Z}$.
Our paper focuses on two decision problems introduced by Choffrut and Karhum\"{a}ki (2005): the \emph{Identity Problem} (does a semigroup contain the neutral element?) and the \emph{Group Problem} (is a semigroup a group?) for finitely generated sub-semigroups of $\mathbb{Z} \wr \mathbb{Z}$.
We show that both problems are decidable.
Our result complements the undecidability of the \emph{Semigroup Membership Problem} (does a semigroup contain a given element?) in $\mathbb{Z} \wr \mathbb{Z}$ shown by Lohrey, Steinberg and Zetzsche (ICALP 2013), and contributes an important step towards solving semigroup algorithmic problems in general metabelian groups.
\end{abstract}

\section{Introduction}
The computational theory of groups and semigroups is one of the
oldest and most well-developed parts of computational algebra.
Dating back to the work of Markov~\cite{markov1947certain} in the 1940s, the area plays an essential role in analysing system dynamics, with notable applications in automata theory and program analysis~\cite{blondel2005decidable, choffrut2005some, derksen2005quantum, hrushovski2018polynomial}.
See~\cite{kharlampovich1995algorithmic} for an all-encompassing survey on this topic.
Among the most prominent problems in this area are \emph{Semigroup Membership} and \emph{Group Membership}, proposed respectively by Markov and Mikhailova in the 1940s and 1960s.
For these decision problems, we work in a fixed group $G$. The input is a finite set of elements $\mG \subseteq G$, plus a distinguished element $g \in G$. 
Denote by $\sgmG$ the semigroup generated by $\mG$, and by $\sgmG_{grp}$ the group generated by $\mG$.
\begin{enumerate}[(i)]
    \item \textit{(Semigroup Membership)} decide whether $\sgmG$ contains $g$.
    \item \textit{(Group Membership)} decide whether $\langle\mG\rangle_{grp}$ contains $g$.
    \setcounter{ProblemCounter}{\value{enumi}}
\end{enumerate}
In this paper, we consider two problems closely related to (i) and (ii): the \emph{Identity Problem} and the \emph{Group Problem}, both introduced by Choffrut and Karhum\"{a}ki~\cite{choffrut2005some} in 2005.
\begin{enumerate}[(i)]
    \setcounter{enumi}{\value{ProblemCounter}}
    \item \textit{(Identity Problem)} decide whether $\sgmG$ contains the neutral element $I$ of $G$.
    \item \textit{(Group Problem)} decide whether $\langle\mG\rangle$ is a group, in other words, whether $\sgmG = \langle\mG\rangle_{grp}$.
    \setcounter{ProblemCounter}{\value{enumi}}
\end{enumerate}
In general matrix groups, Semigroup Membership is undecidable by a classical result of Markov~\cite{markov1947certain}.
All four problems remain undecidable even for integer matrix groups of dimension four~\cite{bell2010undecidability, mikhailova1966occurrence}.
Notably, using an embedding of the \emph{Identity Correspondence Problem}, Bell and Potapov~\cite{bell2010undecidability} showed undecidability of the Identity Problem and the Group Problem in the group $\SL(4, \Z)$ of $4 \times 4$ integer matrices of determinant one.
On the other hand, in the group $\SL(2, \Z)$, all four problems are decidable with various degrees of complexity~\cite{bell2017identity, choffrut2005some, lohrey2021subgroup}.
In particular, the Identity Problem and the Group Problem in $\SL(2, \Z)$ are both \textbf{NP}-complete by a result of Bell, Hirvensalo, and Potapov \cite{bell2017identity}.

In this paper we focus on these decision problems in the \emph{wreath product} $\Z \wr \Z$.
The wreath product is a fundamental construction in group and semigroup theory.
A great number of important groups can be constructed using the wreath product, notably \emph{metabelian groups}. 
Metabelian groups are groups whose commutator is abelian: these are the simplest generalization of abelian groups.
Algorithmic problems in metabelian groups have been the focus of active research since the 1970s~\cite{baumslag1994algorithmic, chapuis1997free, roman1979equations}, with a classic result of Romanovskii~\cite{romanovskii1974some} showing decidability of Group Membership in all finitely presented metabelian groups.
A key part of Romanovskii's proof is to embed metabelian groups into quotients of wreath products.
In fact, the Magnus embedding theorem~\cite{magnus1939theorem} states that every finitely generated free metabelian group can be embedded in a wreath product $\Z^m \wr \Z^n$.
Therefore, understanding the wreath product $\Z \wr \Z$ is the most crucial step towards studying general metabelian groups.
Apart from its interest within group theory, the wreath product also plays an important role in the algebraic theory of automata.
The Krohn–Rhodes theorem~\cite{krohn1965algebraic} states that every finite semigroup (and correspondingly, every finite automaton) can be decomposed into elementary components using wreath products.

One easy way to understand the wreath product $\Z \wr \Z$ is through its isomorphism to a matrix group~\cite{magnus1939theorem} over the Laurent polynomial ring $\Z[X^{\pm}]$:
\begin{align}\label{eq:defphi}
    \Z \wr \Z \cong \left\{ 
\begin{pmatrix}
        X^{b} & y \\
        0 & 1
\end{pmatrix}
\;\middle|\; y \in \Z[X^{\pm}], b \in \Z 
\right\}.
\end{align}
Consider the four aforementioned decision problems in $\Z \wr \Z$.
Since $\Z \wr \Z$ is metabelian~\cite{kharlampovich2020diophantine}, the classic result of Romanovskii~\cite{romanovskii1974some} shows decidability of Group Membership in $\Z \wr \Z$.
If we retrace the proof of Romanovskii, one can reduce Group Membership in $\Z \wr \Z$ to solving systems of linear equations over the ring $\Z[X^{\pm}]$, which can then be decided using Gr\"{o}bner basis.
For Semigroup Membership in $\Z \wr \Z$, Lohrey, Steinberg and Zetzsche showed its undecidability using an encoding of 2-counter machines~\cite{lohrey2015rational}.
Decidability of the Identity Problem and the Group Problem in $\Z \wr \Z$ remained an intricate open problem.
A recent paper by Dong~\cite{dong2022solving} gave a partial decidability result when the generators all satisfy $b = \pm 1$.
Dong's idea was to represent a product of elements in $\Z \wr \Z$ as a walk on $\Z$.
When the generators all satisfy $b = \pm 1$, this walk can be decomposed into simple cycles, and the Group Problem reduces to solving a \emph{single} homogeneous linear equation over the semiring $\N[X]$.
Extending Dong's result to arbitrary generators is highly challenging: the structure of the walk becomes much more complex when we allow steps of arbitrary length.
In this paper, we combine a series of new ideas from graph theory and algebraic geometry to show full decidability of the Identity Problem and the Group Problem in $\Z \wr \Z$.

The first main idea of this paper is to reduce both problems to solving a \emph{system} of homogeneous linear equations over the semiring $\N[X^{\pm}]$, in addition to two \emph{degree constraints}.
We use a highly non-trivial graph theoretic construction to establish this reduction.
The second main idea of this paper is to generalize a local-global principle by Einsiedler, Mouat and Tuncel~\cite{einsiedler2003does} to solve these linear equations with degree constraints.
In particular, the original local-global principle by Einsiedler et al.\ is not compatible with the degree constraints which are essential in our reduction.
We introduce new ideas to prove a generalized local-global principle that incorporates these additional degree constraints.

We now mention some other known decidability results in wreath products.
In~\cite{ganardi2017knapsack}, Ganardi, K{\"o}nig, Lohrey and Zetzsche showed that for every non-trivial finitely generated abelian group $G$, the \emph{knapsack problem} in $G \wr \Z$ is \textbf{NP}-complete. Notably, this result applies to $\Z \wr \Z$.
In~\cite{lohrey2015rational}, Lohrey, Steinberg and Zetzsche showed decidability of the \emph{Rational Subset Membership Problem} (which subsumes all four decision problems mentioned in the beginning) in the wreath product $H \wr V$, where $H$ is a finite and $V$ is virtually free.
In~\cite{DBLP:conf/icalp/CadilhacCZ20}, Cadilhac, Chistikov and Zetzsche proved decidability of Rational Subset Membership in the \emph{Baumslag-Solitar groups} $\mathsf{BS}(1, p)$.
This group can be considered as an analogue of $\left(\Z / p \Z\right) \wr \Z$ ``with carrying''.
In~\cite{kharlampovich2020diophantine}, Kharlampovich, L{\'o}pez, and Myasnikov showed decidability of solving Diophantine equations in certain metabelian groups, including $\Z \wr \Z$ and $\mathsf{BS}(1, p)$.
Many of these results are closely related to automata theory, which we draw inspiration from.

A natural follow-up to our work would be trying to solve the Identity Problem and the Group Problem in \emph{all} finitely presented metabelian groups. This boils down to deciding both problems in quotients of $\Z^m \wr \Z^n$.
One encounters some difficulties when trying to generalize our approach to $\Z^m \wr \Z^n$.
Notably, decomposition of walks in $\Z^n$ are much more complex, and we can no longer reduce these problems to solving a finite system of equations.
We also point out that one cannot go much further beyond metabelian groups (which are 2-step solvable groups), since there exist 3-step solvable groups with undecidable word problem~\cite{kharlampovich1981finitely}.

\section{Preliminaries}
\subsection*{Words, semigroups and graphs}
Let $G$ be an arbitrary group.
Let $\mG = \{g_1, \ldots, g_a\}$ be a finite set of elements in $G$.
Considering $\mG$ as an alphabet, denote by $\mG^*$ the set of words over $\mG$.
For an arbitrary word $w = g_{i_1} g_{i_2} \cdots g_{i_m} \in \mG^*$, by multiplying consecutively the elements appearing in $w$, we can evaluate $w$ as an element $\pi(w)$ in $G$.
We say that the word $w$ \emph{represents} the element $\pi(w)$.
The semigroup $\langle \mG \rangle$ generated by $\mG$ is hence the set of elements in $G$ that are represented by \emph{non-empty} words in $\mG^*$.

A word $w$ over the alphabet $\mG$ is called \emph{full-image} if every letter in $\mG$ has at least one occurrence in $w$.
The following observation shows that deciding the Group Problem amounts to finding a full-image word representing the neutral element.

\begin{restatable}{lem}{lemgrpword}\label{lem:grpword}
    Let $\mG = \{g_1, \ldots, g_a\}$ be a set of elements in a group $G$.
    The semigroup $\langle \mG \rangle$ is a group if and only if the neutral element $I$ of $G$ is represented by a full-image word over $\mG$.
\end{restatable}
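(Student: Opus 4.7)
The plan is to prove both directions by direct manipulation of words, with the reverse direction being the more substantive one.

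For the forward direction ($\Rightarrow$), I would suppose $\langle \mG \rangle$ is a group. Then for each generator $g_i \in \mG$, the inverse $g_i^{-1}$ lies in $\langle \mG \rangle$ and so is represented by some non-empty word $w_i \in \mG^*$. I would then explicitly construct the full-image word
\[
w \;=\; g_1 w_1 \, g_2 w_2 \, \cdots \, g_a w_a,
\]
which visibly contains every letter and evaluates to $g_1 g_1^{-1} g_2 g_2^{-1} \cdots g_a g_a^{-1} = I$.

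For the reverse direction ($\Leftarrow$), suppose $w \in \mG^*$ is a full-image word with $\pi(w) = I$. The key idea is: to show $\langle \mG \rangle$ is a group it suffices to show $g_i^{-1} \in \langle \mG \rangle$ for each generator $g_i$, since then any product in $\mG$ has its inverse realized as the reversed product of these generator-inverses. To get $g_i^{-1} \in \langle \mG \rangle$, I would use full-imageness to factor $w = u \, g_i \, v$ for some (possibly empty) $u, v \in \mG^*$. Applying $\pi$ gives $\pi(u) \, g_i \, \pi(v) = I$, hence $g_i^{-1} = \pi(v u) = \pi(vu)$. Provided $vu$ is non-empty, this exhibits $g_i^{-1}$ as an element of $\langle \mG \rangle$.

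The main (and only) subtle point is the degenerate case $u = v = \varepsilon$, which forces $w = g_i$ and hence $g_i = I$; here one must separately notice that $g_i^{-1} = I = \pi(w) \in \langle \mG \rangle$, so the conclusion still holds. Apart from this corner case, the argument is a clean combinatorial unfolding, and no further machinery is needed.
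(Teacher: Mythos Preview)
Your proposal is correct and matches the paper's proof essentially line for line: the paper also constructs $w = g_1 w_1 \cdots g_a w_a$ for the forward direction and factors $w = v g_i v'$ to obtain $g_i^{-1} = \pi(v')\pi(v)$ for the reverse direction. You are in fact slightly more careful than the paper, which silently glosses over the degenerate case $u = v = \varepsilon$ that you explicitly handle.
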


The following lemma shows that decidability of the Group Problem implies decidability of the Identity Problem.

\begin{lem}[\cite{bell2010undecidability}]\label{lem:grptoid}
    Given a finite subset $\mG$ of a group $G$, the semigroup $\langle \mG \rangle$ contains the neutral element $I$ if and only if there exists a non-empty subset $\mH \subseteq \mG$ such that $\langle \mH \rangle$ is a group.
    In particular, if the Group Problem is decidable in the group $G$, then the Identity Problem is also decidable.
\end{lem}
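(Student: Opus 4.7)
The plan is to prove the biconditional by direct short arguments in each direction, relying on \cref{lem:grpword} for the harder direction, and then to derive the algorithmic consequence by a brute-force enumeration over the (finitely many) non-empty subsets of $\mG$.

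For the backward direction, I would assume that there is a non-empty $\mH \subseteq \mG$ with $\langle \mH \rangle$ a group. Since $\mH$ is non-empty, $\langle \mH \rangle$ is a non-empty subsemigroup of $G$ that is also a group, so it must contain the neutral element $I$. Since every word over $\mH$ is also a word over $\mG$, we have $\langle \mH \rangle \subseteq \langle \mG \rangle$, hence $I \in \langle \mG \rangle$, as required.

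For the forward direction, I would suppose $I \in \langle \mG \rangle$, so there exists a non-empty word $w = g_{i_1} g_{i_2} \cdots g_{i_m} \in \mG^*$ with $\pi(w) = I$. Let $\mH \subseteq \mG$ be the set of distinct letters actually appearing in $w$; then $\mH$ is non-empty (since $w$ is) and, by construction, $w$ is a full-image word over $\mH$ representing $I$. Applying \cref{lem:grpword} to the alphabet $\mH$ immediately yields that $\langle \mH \rangle$ is a group. This is the only step that requires any previously established machinery, so the bulk of the work has effectively been absorbed into \cref{lem:grpword}; I anticipate no genuine obstacle here.

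Finally, for the ``in particular'' clause, I would observe that $\mG$ has only $2^{|\mG|} - 1$ non-empty subsets, enumerable in finite time from the description of $\mG$. Given a decision procedure for the Group Problem over $G$, the Identity Problem is solved by iterating over every non-empty $\mH \subseteq \mG$, querying the Group Problem oracle on input $\mH$, and returning \emph{yes} if and only if at least one query returns \emph{yes}. Correctness of this procedure is exactly the equivalence proved above.
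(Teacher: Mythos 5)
Your proof is correct. Note that the paper does not actually supply a proof of this lemma — it cites it directly from Bell and Potapov~\cite{bell2010undecidability} — so there is no in-text argument to compare against. Your two directions are the standard ones: the backward implication is immediate from $\langle \mH \rangle \subseteq \langle \mG \rangle$ and the fact that a group contains its identity; the forward implication correctly restricts to the support $\mH$ of a witnessing word $w$, observes that $w$ is then full-image over $\mH$, and invokes Lemma~\ref{lem:grpword}. The reduction of the Identity Problem to finitely many Group Problem queries is exactly the intended algorithmic content. In short, you have supplied the proof that the paper elected to omit, and it is sound.
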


For detailed definition of graph theory terms, see~\cite{bondy1976graph}.
All graphs considered in this paper will be directed multigraphs.
For a graph $G$, we denote by $V(G)$ its set of vertices and by $E(G)$ its set of edges.
For a (directed) edge $e$, we denote by $s(e)$ the starting vertex of $e$.

A \emph{loop} is an edge that starts and ends at the same vertex.
A \emph{circuit} is a path that starts and ends at the same vertex.
An \emph{Euler path} of a graph $G$ is a path that uses each edge exactly once.
An \emph{Euler circuit} is an Euler path that starts and ends at the same vertex.
We call a graph \emph{Eulerian} if it contains an Euler circuit.
It is easy to see that attaching a circuit to an Eulerian graph still results in an Eulerian graph.

\subsection*{Laurent polynomials and the wreath product $\Z \wr \Z$}
A (univariate) \emph{Laurent polynomial} with coefficients over $\R$ is an expression of the form
\[
f = \sum_{i = p}^q a_i X^i, \quad \text{where $p, q \in \Z$ and $a_i \in \R, i = p, p+1, \ldots, q$.}
\]
If $p > q$, then $f$ is understood to be zero. 
Otherwise, $p \leq q$, and we suppose $a_p \neq 0$, $a_q \neq 0$.
In this case, we call $p$ the \emph{negative degree} of $f$, denoted by $\deg_-(f)$, and call $q$ the \emph{positive degree} of $f$, denoted by $\deg_+(f)$.
We call $a_p$ the \emph{negative leading coefficient} of $f$, denoted by $\lc_-(f)$, and call $a_q$ the \emph{positive leading coefficient} of $f$, denoted by $\lc_+(f)$.
Define additionally $\deg_-(0) = + \infty$, $\deg_+(0) = - \infty$ and $\lc_-(0) = \lc_+(0) = 0$.
\underline{In this paper, all polynomials will be univariate Laurent polynomials.}
The set of all polynomials with coefficients over $\R$ forms a ring and is denoted by $\R[X^{\pm}]$.
One can define $\Q[X^{\pm}]$ and $\Z[X^{\pm}]$ similarly by restricting the coefficients $a_i$ to $\Q$ and $\Z$.

Given a tuple of  polynomials $\bff = (f_1, \ldots, f_n) \in \left(\R[X^{\pm}]\right)^n$ and $r \in \R$, one naturally defines the evaluation $\bff(r) \coloneqq (f_1(r), \ldots, f_n(r)) \in \R^n$.
The definition of leading coefficients also extends to tuples of polynomials by 
$
\lc_*(\bff) \coloneqq (\lc_*(f_1), \ldots, \lc_*(f_n)) \in \R^n
$, where $* \in \{+, -\}$.

Consider the semiring $\Rp[X^{\pm}]$ of  polynomials with positive coefficients: these are expressions of the form $f = \sum_{i = p}^q a_i X^i$ where $p, q \in \Z$ and $a_i \in \Rp, i = p, p+1, \ldots, q$.
Define further $\Rp[X^{\pm}]^* \coloneqq \Rp[X^{\pm}] \setminus \{0\}$.
One can define $\N[X^{\pm}]$ and $\N[X^{\pm}]^*$ similarly by restricting the coefficients $a_i$ to $\N$.

An element $f = \sum_{i = p}^q a_i X^i \in \Rp[X^{\pm}]^*$ is called \emph{gap-free} if $a_i \neq 0$ for all $i = p, p+1, \ldots, q$.
Is it easy to see that, given arbitrary $M, N \in \Zpp$ and $f \in \Rp[X^{\pm}]^*$, the polynomial $(X^{-M} + X^{-M + 1} +  \cdots + X^N)^n \cdot f$ is gap-free for all large enough $n$.

A \emph{monomial} is a  polynomial $f = a_i X^i$ with only one term (including zero).
Let $d \geq 1$ be a positive integer. One can define the semirings 
\[
\N[X^{\pm d}] \coloneqq \left\{\sum_{i = p}^q a_{di} X^{di} \in \N[X^{\pm}] \right\}, \quad \N[X^{\pm d}]^* \coloneqq \N[X^{\pm d}] \setminus \{0\}.
\]
These are  polynomials whose monomials have degrees divisible by $d$.
Similarly, if $a_{di} \neq 0$ for all $i = p, \ldots, q$, we will call $\sum_{i = p}^q a_{di} X^{di}$ gap-free.
Note that whether a polynomial is gap-free depends on the polynomial ring we consider it in.

Similarly one can define the rings $\Z[X^{\pm d}]$, $\Q[X^{\pm d}]$ and $\R[X^{\pm d}]$.
Furthermore, we define the field of rational functions $\Q(X)$ to be the set of expressions of the form $\frac{f}{g}$, where $f, g \in \Q[X^{\pm}]$.
Similarly, $\Q(X^{d})$ is defined as the set of expressions $\frac{f}{g}$, where $f, g \in \Q[X^{\pm d}]$.

The wreath product $\Z \wr \Z$ has several equivalent definitions.
Here, we introduce the one most convenient to our purpose.

\begin{defn}\label{def:wr}
    The wreath product $\Z \wr \Z$ is a group whose elements are pairs of the form $(y, b)$, where $y \in \Z[X^{\pm}]$ and $b \in \Z$.
    The neutral element in $\Z \wr \Z$ is given by $(0, 0)$.
    Multiplication is defined by
    $
    (y, b) \cdot (y', b') = (y + X^b \cdot y', b + b')
    $,
    and inversion is defined by $(y, b)^{-1} = (X^{-b} \cdot y, - b)$.   
    Note that the element $(y, b)$ corresponds to the matrix 
    $\begin{pmatrix}
        X^{b} & y \\
        0 & 1
    \end{pmatrix}$
    under the isomorphism~\eqref{eq:defphi} in the introduction.
\end{defn}

The wreath product $\Z \wr \Z$ can be embedded into the larger group $\Q(X) \rtimes \Z$, whose elements are pairs of the form $(y, b)$ with $y \in \Q(X)$ and $b \in \Z$ and whose multiplication and inversion are defined using the same formulas as in $\Z \wr \Z$.

\section{Overview of proof}\label{sec:overview}
The main result of this paper is the decidability of the Identity Problem and the Group Problem in $\Z \wr \Z$.
In view of Lemma~\ref{lem:grptoid}, it suffices to prove decidability of the Group Problem.
In this section we give an overview of its proof.

Our proof proceeds in three steps.
As a first step we reduce the Group Problem in $\Z \wr \Z$ to deciding whether a system of linear equations in $\R[X^{\pm}]$ has solution in $\Rp[X^{\pm}]^*$ with two additional degree constraints (see Proposition~\ref{prop:wrtoeq} and Corollary~\ref{cor:sys}).
As the second step we prove a local-global principle that further reduces solving linear equations over $\Rp[X^{\pm}]^*$ to solving a family of ``local'' equations over $\Rpp$ (see Proposition~\ref{prop:locglob}).
As the third step, we show that solving these ``local'' equations can be done using the first order theory of the reals as well as Gr\"{o}bner basis techniques (see Proposition~\ref{prop:declcd} and \ref{prop:declcinf}).

Let $\mG$ be a finite subset of $\Z \wr \Z$.
Write $\mG = \{(y_a, b_a) \mid a \in A\}$ where $A$ is a finite set of indices.
Divide $A$ into three subsets of indices $A = I \cup J \cup K$ where 
\begin{equation}\label{eq:defijk}
    I \coloneqq \{i \mid b_i > 0\}, \quad J \coloneqq \{j \mid b_j < 0\}, \quad K \coloneqq \{k \mid b_k = 0\}.
\end{equation}

First, we exclude the easy case where $I$ or $J$ is empty.
\begin{restatable}{prop}{propeasy}\label{prop:easy}
    Suppose $I = \emptyset$ or $J = \emptyset$.
    The semigroup $\langle \mG \rangle$ is a group if and only if $I = J = \emptyset$ and $\sum_{k \in K} n_k y_k = 0$ for some positive integers $n_k \in \Zpp$.
    In particular, this is decidable by integer programming.
\end{restatable}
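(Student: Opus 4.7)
The plan is to handle the three sub-cases of the hypothesis separately, using the obvious semigroup homomorphism $\pi_2 \colon \Z \wr \Z \to \Z$ sending $(y,b) \mapsto b$ to rule out the mixed cases, and then explicitly analysing the abelian subcase $I=J=\emptyset$ via \Cref{lem:grpword}.

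First I would dispose of the case where exactly one of $I,J$ is empty, say $I=\emptyset$ and $J\neq\emptyset$ (the other case is symmetric). Since every generator has $b_a\leq 0$ and at least one generator has $b_j<0$, every element of $\langle\mG\rangle$ has $\pi_2$-image $\leq 0$. If $\langle\mG\rangle$ were a group, it would have to contain the inverse of some $(y_j,b_j)$ with $j\in J$; this inverse has second coordinate $-b_j>0$, giving a contradiction. Hence $\langle\mG\rangle$ is never a group in this situation, which matches the ``only if'' conclusion.

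Next I would treat the case $I=J=\emptyset$. Here every generator is of the form $(y_k,0)$ with $k\in K$, and the multiplication rule of \Cref{def:wr} collapses to $(y_k,0)\cdot (y_{k'},0) = (y_k + y_{k'},0)$, so the generators commute and
\[
\langle\mG\rangle \;=\; \Bigl\{\Bigl(\textstyle\sum_{k\in K} n_k y_k,\,0\Bigr) \,\Bigm|\, n_k\in\N,\ \textstyle\sum_k n_k \geq 1\Bigr\}.
\]
By \Cref{lem:grpword}, $\langle\mG\rangle$ is a group iff the identity $(0,0)$ is represented by a full-image word; since we are in the commutative setting, this is equivalent to the existence of $(n_k)_{k\in K}\in\Zpp^{K}$ with $\sum_k n_k y_k = 0$. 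For the non-trivial direction (existence of such $(n_k)$ implies $\langle\mG\rangle$ is a group), I would observe that given any $\bigl(\sum_k m_k y_k,0\bigr)\in \langle\mG\rangle$, one can choose $M\in\Zpp$ so large that $Mn_k - m_k\geq 0$ for all $k$ with at least one strictly positive value, and then $\bigl(\sum_k (Mn_k-m_k)y_k,0\bigr)$ is its inverse inside $\langle\mG\rangle$.

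Finally, for decidability, the condition $\sum_{k\in K} n_k y_k = 0$ with $n_k \geq 1$ is, after the shift $n_k = n_k' + 1$, a system of finitely many linear equations over $\Z$ (one per monomial appearing in the $y_k$) in the unknowns $n_k'\in\N$. Feasibility of such systems is the classical integer linear programming feasibility problem and is decidable. I do not expect any serious obstacle here: the only subtle point is invoking \Cref{lem:grpword} to reduce the group condition to finding a \emph{full-image} word representing the identity, which is exactly what forces the strict positivity $n_k\in\Zpp$ rather than merely $n_k\in\N$.
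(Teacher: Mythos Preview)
Your proposal is correct and follows essentially the same approach as the paper: use the second-coordinate projection to rule out the case where exactly one of $I,J$ is empty, then invoke \Cref{lem:grpword} in the commutative case $I=J=\emptyset$ to reduce to the existence of strictly positive integers $n_k$ with $\sum_k n_k y_k=0$, and finally observe that this is an integer programming instance. Your explicit construction of inverses in the commutative case is an extra direct argument that the paper omits (it simply re-invokes \Cref{lem:grpword} for the converse), but it is harmless and correct.
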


A simple proof of Proposition~\ref{prop:easy} is given in the Appendix~\ref{app:proofs}.
\underline{For the rest of this paper,} \underline{we will suppose $I \neq \emptyset$ and $J \neq \emptyset$.}

Define
\[
d \coloneqq \gcd\left( \{b_a \mid  a \in I \cup J\} \right).
\]
For each pair $(i, j) \in I \times J$, define the rational function
\begin{equation}\label{eq:defhij}
    h_{(i,j)} \coloneqq \frac{y_i}{1 + X^d + \cdots + X^{b_i - d}} + \frac{y_j}{X^{-d} + X^{-2d} + \cdots + X^{- |b_j|}} \in \Q(X).
\end{equation}
By direct computation we have
\begin{equation}\label{eq:effectele}
(y_i, b_i)^{|b_j|} \cdot (y_j, b_j)^{b_i} = \left(h_{(i,j)} \cdot (1 + X^d + \cdots + X^{b_i |b_j| - d}), 0\right).
\end{equation}
One can also take \eqref{eq:effectele} as the definition of $h_{(i,j)}$.

A subset $S$ of $I \times J$ is called \emph{double-full} if for every $i \in I$ there exists $j_i \in J$ such that $(i, j_i) \in S$, and for every $j \in J$ there exists $i_j \in I$ such that $(i_j, j) \in S$.
The following proposition reduces the Group Problem in $\Z \wr \Z$ to solving linear equations over $\NXds$.

\begin{restatable}{prop}{propwrtoeq}\label{prop:wrtoeq}
The semigroup $\langle \mG \rangle$ is a group if and only if there exist a double-full set $S \subset I \times J$ and polynomials $f_{(i,j)}, f_k \in \NXds$ for $(i, j) \in S, k \in K$ that satisfy the following three conditions.
\begin{enumerate}[(i)]
    \item \textbf{(Single linear equation)} The following equation over $\Q(X)$ is satisfied:
    \begin{equation}\label{eq:Xd}
        \sum_{(i, j) \in S} f_{(i,j)} \cdot h_{(i,j)} + \sum_{k \in K} f_k \cdot y_{k} = 0.
    \end{equation}
    \item \textbf{(Positive degree bound)} We have:
    \begin{equation}\label{eq:deg+}
        \deg_{+}\left(\sum\nolimits_{(i, j) \in S} f_{(i,j)}\right) + d \geq \deg_{+}\left(\sum\nolimits_{k \in K} f_{k}\right).
    \end{equation}
    \item \textbf{(Negative degree bound)} We have:
    \begin{equation}\label{eq:deg-}
        \deg_{-}\left(\sum\nolimits_{(i, j) \in S} f_{(i,j)}\right) \leq \deg_{-}\left(\sum\nolimits_{k \in K} f_{k}\right).
    \end{equation}
\end{enumerate}
\end{restatable}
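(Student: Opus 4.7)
The plan is to identify full-image words over $\mG$ representing the neutral element with closed, balanced, labelled walks on $\Z$, and then to translate the walk data into the polynomials $f_{(i,j)}, f_k$ of the statement. For any word $w = g_{a_1} \cdots g_{a_m}$ over $\mG$, I would consider its \emph{position polynomials} $f_a := \sum_{t:\, a_t = a} X^{s_t}$, where $s_t := b_{a_1} + \cdots + b_{a_{t-1}}$ is the partial sum of step sizes; since each $s_t$ is a multiple of $d$, we have $f_a \in \N[X^{\pm d}]$. By Definition~\ref{def:wr}, the word $w$ evaluates to $(0, 0)$ exactly when $\sum_a (X^{b_a} - 1) f_a = 0$ (Eulerian balance of the induced walk on $\Z$) and $\sum_a f_a y_a = 0$ (polynomial vanishing), and $w$ is full-image iff every $f_a$ is non-zero. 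Using $b_k = 0$ and factoring $X^d - 1$ from the first identity rewrites it as the common-marginal equality
\[
P \;:=\; \sum_{i \in I} (1 + X^d + \cdots + X^{b_i - d}) f_i \;=\; \sum_{j \in J} (X^{-d} + \cdots + X^{-|b_j|}) f_j \;\in\; \N[X^{\pm d}].
\]

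For the forward direction, I would start from a full-image word $w$ given by Lemma~\ref{lem:grpword}, extract the $f_a$'s, and apply a coefficient-wise bipartite transportation argument (whose two marginals are the two expressions for $P$) to produce $f_{(i,j)} \in \N[X^{\pm d}]$ satisfying $\sum_j f_{(i,j)} = (1 + X^d + \cdots + X^{b_i - d}) f_i$ and $\sum_i f_{(i,j)} = (X^{-d} + \cdots + X^{-|b_j|}) f_j$. Setting $S := \{(i,j) : f_{(i,j)} \neq 0\}$ would yield a double-full set since $f_i$ and $f_j$ are non-zero by full-image. Substituting these marginal identities into the definition of $h_{(i,j)}$ collapses $\sum_{(i,j) \in S} f_{(i,j)} h_{(i,j)}$ to $\sum_i f_i y_i + \sum_j f_j y_j$, and then equation~(i) follows from $\sum_a f_a y_a = 0$. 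For the degree bounds, I would analyse the extremal vertices of the walk: Eulerian balance at the rightmost vertex $R$ forces $\max_i (\deg_+(f_i) + b_i) = \max_j \deg_+(f_j) = R$, whence $\deg_+(P) + d = R$, and connectedness of the walk forces every $K$-loop position to lie in $[\deg_-(P),\, \deg_+(P) + d]$, yielding (ii) and (iii) symmetrically.

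For the backward direction, given a double-full $S$ and $f$'s satisfying (i)--(iii), I would multiply every $f_{(i,j)}$ and $f_k$ by a common gap-filling factor of the form $Q \cdot (X^{-M} + X^{-M+d} + \cdots + X^N)^n$, where $Q = \prod_i (1 + X^d + \cdots + X^{b_i - d}) \cdot \prod_j (X^{-d} + \cdots + X^{-|b_j|})$ and $M, N, n$ are chosen large enough. This preserves (i), (ii), and (iii), while simultaneously ensuring that each marginal $\sum_j f_{(i,j)}$ and $\sum_i f_{(i,j)}$ is divisible in $\N[X^{\pm d}]$ by $1 + X^d + \cdots + X^{b_i-d}$ and $X^{-d} + \cdots + X^{-|b_j|}$ respectively with non-negative polynomial quotients $f_i, f_j$, and that every $f_a$ becomes gap-free on $d\Z$. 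These $f_a$'s together with $f_k$ then define a labelled multigraph $\Gamma$ on $d\Z$: the marginal identities imply Eulerian balance $\sum_a (X^{b_a} - 1) f_a = 0$; gap-freeness together with the degree bounds force $\Gamma$ to be connected; and double-fullness of $S$ combined with $f_k \in \N[X^{\pm d}]^*$ guarantees that every generator is used. Any Eulerian circuit of $\Gamma$ starting at $0$ then reads off a full-image word evaluating to $(0, 0)$, certifying that $\sgmG$ is a group.

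The hard part will be the backward direction, specifically showing that the single common scaling factor simultaneously achieves divisibility of both marginal families with non-negative quotients, fills all gaps in every support so that $\Gamma$ becomes connected, and leaves the linear relation (i) and the degree bounds (ii), (iii) intact.
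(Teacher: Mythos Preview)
Your proposal is correct in outline and takes a genuinely different route from the paper, so a brief comparison is in order.

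\textbf{Forward direction.} The paper passes to ``radical'' generators $(\hy_a,\hb_a)$ with $\hb_a \in \{d,-d,0\}$, replaces $w$ by $\hw$, and decomposes the $\hG$-graph $G(\hw)$ into primitive two-edge circuits; the polynomial $f_{(i,j)}$ records the attachment points of circuits of type $(i,j)$. You stay with the original generators, read off the position polynomials $f_a$ directly, and obtain the $f_{(i,j)}$ via a coefficient-wise bipartite transportation splitting of the common marginal $P$. The two constructions yield exactly the same marginal identities $\sum_j f_{(i,j)} = (1+\cdots+X^{b_i-d})f_i$ and $\sum_i f_{(i,j)} = (X^{-d}+\cdots+X^{-|b_j|})f_j$, and these identities are all that is needed to collapse $\sum f_{(i,j)} h_{(i,j)}$ to $\sum_{a\in I\cup J} f_a y_a$ and to read off the degree bounds from the extremal vertex of the walk. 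Your packaging avoids the auxiliary alphabet $\hG$ and the circuit-peeling induction (Lemma~\ref{lem:decomp}), at the cost of importing the transportation lemma.

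\textbf{Backward direction.} Here the two arguments diverge more substantially. The paper does not attempt to recover generator-wise polynomials $f_a$; instead it builds the Eulerian $\mG$-graph $G$ explicitly in three stages (foundation $A'$ from Lemma~\ref{lem:A}, then elementary circuits $(y_i,b_i)^{|b_j|}(y_j,b_j)^{b_i}$, then $k$-loops), verifying connectedness after each stage. Your plan is to scale by $Q\cdot(\text{gap-filler})^n$, divide the marginals to extract $f_i,f_j\in\N[X^{\pm d}]$, and let the Eulerian balance plus gap-freeness force connectedness of the resulting multigraph $\Gamma$. This works, but the connectedness step is less automatic than your summary suggests: gap-freeness of each $f_a$ alone does not give connectedness, since the edges of $\Gamma$ have lengths $b_a$ rather than $d$. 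You need the additional observation that for large $n$ all the support intervals of the $\tilde f_a$ overlap on a common core of length exceeding $\max_a|b_a|$ (so that B\'ezout combinations of the $b_a$ can be realised inside the core), and that every boundary vertex reaches the core in $O(1)$ steps by following any outgoing edge. This is the lemma hiding behind your flagged ``hard part''; once stated, it is routine. The paper's staged construction sidesteps this lemma by guaranteeing $dt\in V(A'')$ for every $t$ in the relevant range (Equation~\eqref{eq:conn}) before any loops are attached.
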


The proof of Proposition~\ref{prop:wrtoeq} will be given in Section~\ref{sec:wrtopoly}.
The idea is roughly as follows.
By Lemma~\ref{lem:grpword}, $\langle \mG \rangle$ is a group if and only if there is a full-image word $w \in \mG^*$ that represents the neutral element.
For the ``only if'' statement of Proposition~\ref{prop:wrtoeq}, we will represent $w$ as a walk over $\Z$, and decompose the walk into ``primitive circuits''.
Each primitive circuit contributes a multiple of $h_{(i,j)}$ or $y_k$ to the element represented by $w$.
Since $w$ represents the neutral element, this results in the linear equation in Condition~(i). Conditions~(ii) and (iii) will stem from the connectedness of the walk.
The ``if'' statement is significantly harder. Given the polynomials $f_{(i,j)}, f_k$, we will construct an Eulerian graph $G$ by attaching long ``elementary circuits'' in a way that corresponds to the coefficients of $f_{(i,j)}, f_k$.
We then read a word $w$ from an Euler circuit of $G$.
Condition~(i) will make sure $w$ represents the neutral element.
However, making sure $G$ is connected is highly non-trivial and will be the main difficulty of the proof.
In particular, Conditions~(ii)-(iii) will be crucial.
This concludes the idea of the proof for Proposition~\ref{prop:wrtoeq}.

Note that Proposition~\ref{prop:wrtoeq} involves finding solutions over $\NXds$ for linear equations with coefficients in $\Q(X)$.
When $d > 1$, this is inconvenient, so we now further reduce Proposition~\ref{prop:wrtoeq} to finding solutions over $\Rp[X^{\pm}]^*$ for a \emph{system} of linear equations.
For each $(i, j) \in I \times J$, since the denominator of $h_{(i,j)}$ is an element in $\Q[X^{\pm d}]$, there exist $h_{(i,j), 0}, \ldots, h_{(i,j), d-1} \in \Q(X)$ such that $h_{(i,j)}$ can be written as
\begin{equation}\label{eq:hsys}
h_{(i,j)} = h_{(i,j),0}(X^d) + h_{(i,j),1}(X^d) \cdot X + \cdots + h_{(i,j),d-1}(X^d) \cdot X^{d-1}.
\end{equation}
Similarly, for each $k \in K$, there exist $y_{k, 0}, \ldots, y_{k, d-1} \in \Q[X^{\pm}]$ so that $y_{k}$ can be written as
\begin{equation}\label{eq:ysys}
y_{k} = y_{k,0}(X^d) + y_{k,1}(X^d) \cdot X + \cdots + y_{k,d-1}(X^d) \cdot X^{d-1}.
\end{equation}
The following corollary shows that, using the elements $h_{(i,j), m}, y_{k, m}$ defined in~\eqref{eq:hsys} and \eqref{eq:ysys}, we can rewrite the conditions in Proposition~\ref{prop:wrtoeq} using only variables in $\Rp[X^{\pm}]^*$ instead of $\NXds$.
See Appendix~\ref{app:proofs} for a simple proof of Corollary~\ref{cor:sys}.
\begin{restatable}{cor}{corsys}\label{cor:sys}
The semigroup $\langle \mG \rangle$ is a group if and only if there exist a double-full set $S \subset I \times J$ and polynomials $f_S, f_K, f_{(i,j)}, f_k \in \RXs$ for $(i, j) \in S, k \in K$ that satisfy the following three conditions.
\begin{enumerate}[(i)]
    \item \textbf{(System of linear equations)} The following linear equations over $\R(X)$ are satisfied:
    \begin{align}
        & \sum_{(i, j) \in S} f_{(i,j)} h_{(i,j),m} + \sum_{k \in K} f_k y_{k, m} = 0, \quad m = 0, \ldots, d-1, \label{eq:syscor1} \\
        & f_S = \sum_{(i, j) \in S} f_{(i,j)}, \quad\quad
        f_K = \sum_{k \in K} f_k. \label{eq:syscor3}
    \end{align}
    \item \textbf{(Positive degree bound)} We have:
    \begin{equation}\label{eq:deg+cor}
        \deg_{+}\left(f_S\right) + 1 \geq \deg_{+}\left(f_K\right).
    \end{equation}
    \item \textbf{(Negative degree bound)} We have:
    \begin{equation}\label{eq:deg-cor}
        \deg_{-}\left(f_S\right) \leq \deg_{-}\left(f_K\right).
    \end{equation}
\end{enumerate}
\end{restatable}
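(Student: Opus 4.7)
The plan is to recognize that Corollary~\ref{cor:sys} is essentially a restatement of Proposition~\ref{prop:wrtoeq} after the variable substitution $X \leftrightarrow X^d$. The underlying algebraic fact is that $\Q(X)$ is free of rank $d$ over $\Q(X^d)$ with basis $\{1, X, X^2, \ldots, X^{d-1}\}$, so an element of $\Q(X)$ vanishes if and only if each of its $d$ coordinates in this basis vanishes. Substituting the decompositions~\eqref{eq:hsys} and~\eqref{eq:ysys} into~\eqref{eq:Xd}, and assuming the $\tilde{f}_{(i,j)}, \tilde{f}_k$ lie in $\NXds$ (so each $\tilde{f}(X) = f(X^d)$ for some $f \in \NXs$), the left-hand side groups as $\sum_{m=0}^{d-1} X^m \cdot E_m(X^d)$, where the vanishing of each $E_m$ is, after renaming $X^d \to X$, precisely the $m$-th equation of~\eqref{eq:syscor1}.

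For the forward direction, I would start with $\NXds$-solutions $\tilde{f}_{(i,j)}, \tilde{f}_k$ of Proposition~\ref{prop:wrtoeq}, write each as $\tilde{f}(X) = f(X^d)$ with $f \in \NXs \subseteq \RXs$, and apply the splitting above to obtain~\eqref{eq:syscor1}. Defining $f_S, f_K$ as in~\eqref{eq:syscor3}, and using $\deg_\pm(\tilde{f}) = d \cdot \deg_\pm(f)$ for $\tilde{f} \in \NXds$, the bounds~\eqref{eq:deg+} and~\eqref{eq:deg-} of the proposition divide by $d$ to give exactly~\eqref{eq:deg+cor} and~\eqref{eq:deg-cor}.

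For the backward direction, I would reverse this process starting from $\RXs$-solutions of the corollary. The first step is to replace them by $\NXs$-solutions. Since the $h_{(i,j),m}, y_{k,m}$ are rational, the set of $\Rp[X^{\pm}]$-tuples satisfying~\eqref{eq:syscor1} is a rational polyhedral cone; the face containing the given real solution in its relative interior is again rational polyhedral, and its relative interior contains rational points, which have the same coordinate-wise support as the original solution and therefore satisfy the same degree bounds. Clearing denominators lands in $\NXs$. The variable change $\tilde{f}(X) := f(X^d) \in \NXds$ then yields the proposition's solutions: the single equation~\eqref{eq:Xd} follows by reassembling the $d$ split equations, and~\eqref{eq:deg+},~\eqref{eq:deg-} follow from~\eqref{eq:deg+cor},~\eqref{eq:deg-cor} after multiplying by $d$.

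The main obstacle I anticipate is the rationalization step in the backward direction, because the degree bounds~\eqref{eq:deg+cor},~\eqref{eq:deg-cor} are not themselves linear constraints on the coefficients of the $f_{(i,j)}, f_k$: they depend on the supports of $f_S$ and $f_K$. The resolution is the face-selection trick above: on the particular face of the solution cone determined by the given real point, the supports (and thus the degrees) are pinned down, so the standard $\Rp$-to-$\Qp$ rationality passage on rational polyhedra applies directly without any loss of the degree information.
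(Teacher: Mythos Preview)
Your proposal is correct and follows essentially the same approach as the paper: both directions hinge on the substitution $X\leftrightarrow X^d$ together with the freeness of $\Q(X)$ over $\Q(X^d)$, and the only nontrivial step---passing from an $\RXs$ solution back to an $\NXs$ solution while preserving the degree constraints---is handled by the same rationality argument. Your face-selection phrasing (fixing the full support and then picking a rational interior point) is a slightly cleaner packaging of what the paper does by ``fixing the degrees'' and encoding positivity and the degree constraints as a boolean combination of homogeneous linear inequalities over $\Q$; the paper also first clears the denominators of the $h_{(i,j),m}$ to land in $\Q[X^{\pm}]$, a step you should make explicit before invoking the polyhedral-cone picture.
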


For brevity, from now on we denote $\A \coloneqq \RX$ and $\A^+ \coloneqq \RXs$.
Denote also $n \coloneqq 2 + |S| + |K|$.
Define the following subset of $\A^{n}$:
\begin{equation}\label{eq:defM}
    \mM \coloneqq \left\{ \bff = \left(f_S, f_K, (f_{(i,j)})_{(i, j) \in S}, (f_{k})_{k \in K} \right) \in \A^n \;\middle|\; \bff \text{ satisfies \eqref{eq:syscor1} and \eqref{eq:syscor3}} \right\}.
\end{equation}
That is, $\mM$ is the set of solutions of the linear equations~\eqref{eq:syscor1}-\eqref{eq:syscor3}.
Using linear algebra over the polynomial ring $\A$ (see~\cite{bareiss1968sylvester}), one can effectively compute a set of vectors $\bg_1, \ldots, \bg_m \in \A^{n}$ such that
\begin{equation}\label{eq:baseM}
\mM = \left\{ \phi_1 \bg_1 + \cdots + \phi_m \bg_m \;\middle|\; \phi_1, \ldots, \phi_m \in \A \right\}.
\end{equation}
One can even suppose $\bg_1, \ldots, \bg_m \in \left(\Q[X^{\pm}]\right)^{n}$ since $h_{(i,j)}$ and $y_k$ all have rational coefficients.
A set $\mM$ of the form~\eqref{eq:baseM} will be called a \emph{$\A$-submodule} of $\A^{n}$, and the elements $\bg_1, \ldots, \bg_m$ will be called a \emph{basis} of $\mM$.

Corollary~\ref{cor:sys} actually states the following: the semigroup $\langle \mG \rangle$ is a group if and only if $\mM$ contains an element $\bff = (f_S, f_K, \cdots) \in \left(\A^+\right)^{n}$ such that $\deg_+(f_S) + 1 \geq \deg_+(f_K)$ and $\deg_-(f_S) \leq \deg_-(f_K)$.
The key to deciding the existence of $\bff$ is the following proposition, which can be considered as a local-global principle that generalizes a result of Einsiedler, Mouat and Tuncel~\cite{einsiedler2003does}.
\begin{restatable}{prop}{proplocglob}\label{prop:locglob}
Let $\mM$ be a $\A$-submodule of $\A^{n}$.
Then $\mM$ contains an element $\bff = (f_S, f_K, \cdots) \in \left(\A^+\right)^{n}$ with $\deg_+(f_S) + 1 \geq \deg_+(f_K)$ and $\deg_-(f_S) \leq \deg_-(f_K)$, if and only if the following three conditions are all satisfied.
\begin{enumerate}[(i)]
    \item \textbf{(Existence of $\bff_r$ for all $r \in \R_{>0}$)} For each $r \in \R_{>0}$, there exists $\bff_r \in \mM$ such that 
    \begin{equation}\label{eq:locr}
        \bff_r(r) \in \R_{>0}^{n}.
    \end{equation}
    \item \textbf{(Existence of $\bff_{\infty}$)} There exists $\bff_{\infty} = (f_{\infty,S}, f_{\infty,K}, \cdots) \in \mM$ such that 
    \begin{equation}\label{eq:loc+}
        \lc_+(\bff_{\infty}) \in \R_{>0}^{n} \quad \text{ and } \quad \deg_{+}\left(f_{\infty,S}\right) + 1 \geq \deg_{+}\left(f_{\infty,K}\right).
    \end{equation}
    \item \textbf{(Existence of $\bff_{0}$)} There exists $\bff_{0} = (f_{0, S}, f_{0, K}, \cdots) \in \mM$ such that 
    \begin{equation}\label{eq:loc-}
        \lc_-(\bff_{0}) \in \R_{>0}^{n} \quad \text{ and } \quad \deg_{-}\left(f_{0, S}\right) \leq \deg_{-}\left(f_{0, K}\right).
    \end{equation}
\end{enumerate}
\end{restatable}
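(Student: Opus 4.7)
The ``only if'' direction is immediate: setting $\bff_r = \bff_\infty = \bff_0 := \bff$ verifies the three local conditions, since every nonzero $f \in \A^+$ satisfies $f(r) > 0$ for all $r > 0$ and has $\lc_+(f), \lc_-(f) > 0$.

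For the ``if'' direction, the plan is to first assemble the local data into a single $\tilde{\bff} \in \mM$ that is componentwise strictly positive on $\R_{>0}$, with $\lc_+(\tilde{\bff}), \lc_-(\tilde{\bff}) \in \R_{>0}^n$ and the two required degree inequalities, and then promote $\tilde{\bff}$ to an element of $\mM \cap (\A^+)^n$ by a Polya-type gap-filling trick. Because $\lc_+(\bff_\infty) \in \R_{>0}^n$, there is an $R > 0$ such that $\bff_\infty(r) \in \R_{>0}^n$ for all $r > R$, and symmetrically there is an $\epsilon \in (0, R)$ with $\bff_0(r) \in \R_{>0}^n$ for all $r \in (0, \epsilon)$. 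For each $r_0 \in [\epsilon, R]$, continuity of $\bff_{r_0}$ gives an open neighbourhood of $r_0$ on which it is componentwise positive; compactness of $[\epsilon, R]$ extracts a finite subcover yielding $\bff_{r_1}, \ldots, \bff_{r_N} \in \mM$.

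I then pick an integer $L$ so large that, componentwise, each positive degree in $X^L \bff_\infty$ strictly exceeds the corresponding positive degree in every $\bff_{r_i}$, and analogously each negative degree in $X^{-L} \bff_0$ lies strictly below the corresponding negative degree in every $\bff_{r_i}$. Monomial multiplication preserves the two degree inequalities of $\bff_\infty$ and $\bff_0$, so for appropriate positive reals $\mu, \nu, \lambda_i$ I set
\[
\tilde{\bff} := \mu X^L \bff_\infty + \nu X^{-L} \bff_0 + \sum_{i=1}^N \lambda_i \bff_{r_i} \in \mM.
\]
The choice of $L$ forces the positive and negative leading coefficients of $\tilde{\bff}$ to come entirely from the two endpoint terms, giving $\lc_\pm(\tilde{\bff}) \in \R_{>0}^n$ together with the required degree inequalities, and positivity of $\tilde{\bff}(r)$ for very large and very small $r$ follows once $\mu, \nu$ dominate the $\lambda_i$. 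The delicate point is positivity of $\tilde{\bff}(r)$ on the middle range $[\epsilon, R]$: although each such $r$ is covered by some $\bff_{r_i}$, the other $\bff_{r_j}$ may be negative in some components at that $r$. I would handle this either by (i) replacing each $\bff_{r_i}$ by $p_i^2 \bff_{r_i} \in \mM$, with $p_i \in \A$ chosen to vanish at the other sample points and concentrate its mass near $r_i$, followed by a careful choice of the $\lambda_i$ and a continuity argument, or by (ii) a Hahn--Banach duality argument: non-existence of such a $\tilde{\bff}$ would produce a linear functional on $\A^n$ that is nonnegative on $(\A^+)^n$ and vanishes on $\mM$; by one-dimensional moment theory this corresponds to a positive Borel measure on $[0, \infty]$ annihilating $\mM$, contradicting one of the three local conditions according to where that measure is supported. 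Approach (ii) generalises the Einsiedler--Mouat--Tuncel argument~\cite{einsiedler2003does}, with the two endpoint terms providing the bookkeeping needed to incorporate the degree constraints.

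For the final step, each component $\tilde{f}$ of $\tilde{\bff}$ is strictly positive on $\R_{>0}$ with $\lc_+(\tilde{f}), \lc_-(\tilde{f}) > 0$. A univariate Polya-type theorem for Laurent polynomials then supplies an $M \in \N$ such that $(X^{-1} + 1 + X)^M \tilde{f}$ has all strictly positive coefficients; taking $M$ uniformly large across components and using the $\A$-module structure of $\mM$, the vector $(X^{-1} + 1 + X)^M \tilde{\bff}$ lies in $\mM \cap (\A^+)^n$. Multiplication by $(X^{-1} + 1 + X)^M$ shifts every positive degree by $+M$ and every negative degree by $-M$, preserving both degree inequalities and yielding the desired global solution. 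The main obstacle is the middle step above: achieving global positivity on $[\epsilon, R]$ while simultaneously maintaining the leading-coefficient structure and degree inequalities at both ends, which is precisely where the Einsiedler--Mouat--Tuncel local-global principle must be generalised beyond its original positivity-only setting.
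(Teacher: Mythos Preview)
Your overall architecture matches the paper's: merge $\bff_\infty$ and $\bff_0$ into an ``endpoint'' element, patch a compact middle interval via condition~(i) and a compactness/partition-of-unity construction (your approach~(i) with the $p_i^2$ is essentially the paper's Lemma~\ref{lem:fC}), and finally invoke Handelman's theorem (your P\'olya-type step) to pass from pointwise positivity plus positive leading coefficients to an element of $(\A^+)^n$. You have also correctly isolated the crux: gluing the middle to the two ends while preserving both degree inequalities.

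The gap is that your gluing mechanism, as written, does not close. In the ansatz $\tilde{\bff} = \mu X^L \bff_\infty + \nu X^{-L} \bff_0 + \bff_C$, the scalars $\mu,\nu$ face contradictory demands. On the compact middle, where $\bff_\infty$ and $\bff_0$ may take negative values, you need $\mu,\nu$ small so that $\bff_C$ wins; but in the transition zone just outside that compact---where $\bff_C$ may already be negative and $r$ is not yet extreme enough for the leading-term asymptotics to dominate---you need the endpoint contribution to be large. No fixed pair $(\mu,\nu)$ resolves this, and enlarging the compact on which $\bff_C$ is positive only pushes the transition zone outward while typically raising the degree of $\bff_C$. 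Your Hahn--Banach route~(ii) does not obviously fare better: the dual-measure picture handles pure positivity, but you have not said how the two degree inequalities become linear constraints compatible with a separating functional.

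The paper's resolution is to replace your \emph{scalar} multiplier by a \emph{polynomial} one: take $\bff = \epsilon\, q^N \cdot \bff_{end} + \bff_C$ with $q = \tfrac{1}{2c}(X+X^{-1})$, where $\bff_{end}$ combines $\bff_0$ and $X^M\bff_\infty$ as in Lemma~\ref{lem:fend} and $c$ is chosen so that $\bff_{end}(r)\in\Rpp^n$ off $[1/c,c]$. The key property of $q$ is that $q(r)<1$ on $[1/c,c]$ while $q(r)>2$ off $[1/4c,4c]$; raising to a large power $N$ therefore \emph{simultaneously} suppresses $\bff_{end}$ in the middle and amplifies it at both ends, exactly dissolving the tension above. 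Because $q$ is symmetric under $X\leftrightarrow X^{-1}$, multiplication by $q^N$ shifts every $\deg_+$ by $+N$ and every $\deg_-$ by $-N$, so both degree inequalities survive unchanged. This single trick is the missing ingredient in your scheme.
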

The proof of Proposition~\ref{prop:locglob} will be given in Section~\ref{sec:locglob}.
The original result of Einsiedler et al.~\cite[Theorem~1.3]{einsiedler2003does} gives a similar local-global principle without the degree constraints on $f_S$ and $f_K$.
While our proof follows the main steps of the original proof, we need to introduce new arguments in order for the degree constraint to stay compatible with the local-global principle.

One direction of the implication in Proposition~\ref{prop:locglob} is clear.
In fact, if $f \in \A^+$ and $r \in \Rpp$, then we have $f(r) \in \Rpp$ and $\lc_{+}(f), \lc_{-}(f) \in \Rpp$.
Therefore, if $\mM$ contains an element $\bff = (f_S, f_K, \cdots) \in \left(\A^+\right)^{n}$ with $\deg_+(f_S) + 1 \geq \deg_+(f_K)$ and $\deg_-(f_S) \leq \deg_-(f_K)$, then simply take $\bff_r = \bff_{\infty} = \bff_{0} = \bff$ for all $r$:
Equation~\eqref{eq:locr} is satisfied for all $r$ as well as \eqref{eq:loc+} and \eqref{eq:loc-}; hence all three conditions are satisfied.

On the other hand, if the Equation~\eqref{eq:locr} as well as \eqref{eq:loc+} and \eqref{eq:loc-} can be satisfied \emph{individually} by different $\bff_r, \bff_{\infty}, \bff_{0}$, we cannot \emph{a priori} find an element $\bff \in \mM$ in $\left(\A^+\right)^{n}$.
Such an element $\bff$ would \emph{simultaneously} satisfy Equations~\eqref{eq:locr} for all $r \in \R_{>0}$ as well as \eqref{eq:loc+} and \eqref{eq:loc-}.
The key idea of proving this non-trivial direction is that if all three conditions (i)-(iii) are satisfied, then we can ``glue'' these different $\bff_r, \bff_{\infty}$ and $\bff_{0}$ together to obtain a single $\bff$ that satisfies Equation~\eqref{eq:locr} for \emph{all} $r$ as well as \eqref{eq:loc+} and \eqref{eq:loc-}.
While this idea comes from the original proof, the difficult part in our generalization is to make sure the degree constraints are still satisfied after the gluing procedure.
In the end, we multiply this $\bff$ by a ``large enough'' polynomial to obtain an element in $\left(\A^+\right)^{n}$, using a theorem of Handelman (Theorem~\ref{cor:Handelman}).

The following two propositions show that Conditions~(i), (ii) and (iii) of Proposition~\ref{prop:locglob} are all decidable.

\begin{restatable}{prop}{propdeclcd}\label{prop:declcd}
Let $\mM$ be an $\A$-submodule of $\A^{n}$.
Given as input a finite basis of $\mM$, it is decidable whether for every $r \in \R_{>0}$ there exists $\bff_r \in \mM$ with $\bff_r(r) \in \R_{>0}^{n}$.
\end{restatable}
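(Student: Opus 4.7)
The plan is to reduce the condition to a sentence in the first-order theory of the reals and then invoke Tarski--Seidenberg. Let $\bg_1, \ldots, \bg_m \in (\Q[X^{\pm}])^n$ be the given basis of $\mM$, as in~\eqref{eq:baseM}. A general element $\bff \in \mM$ has the form $\bff = \phi_1 \bg_1 + \cdots + \phi_m \bg_m$ with $\phi_j \in \A$, so its evaluation at $r \in \R_{>0}$ is $\bff(r) = \phi_1(r)\bg_1(r) + \cdots + \phi_m(r)\bg_m(r)$. Since each $\phi_j$ may be chosen to be an arbitrary real constant (a constant is a Laurent polynomial), the set $\{\bff(r) : \bff \in \mM\}$ coincides with the $\R$-linear span $V(r) \subseteq \R^n$ of $\bg_1(r), \ldots, \bg_m(r)$. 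Conversely every element of that span is attained by some $\bff \in \mM$ at the point $r$. Hence the condition to decide is equivalent to: $V(r) \cap \R_{>0}^n \neq \emptyset$ for every $r > 0$.

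Writing $\bg_j = (g_{j,1}, \ldots, g_{j,n})$ with $g_{j,i} \in \Q[X^{\pm}]$, this assertion translates into the first-order sentence
\[
\forall r \left[\, r > 0 \;\Longrightarrow\; \exists \lambda_1 \cdots \exists \lambda_m \; \bigwedge_{i=1}^{n} \sum_{j=1}^{m} \lambda_j \, g_{j,i}(r) > 0 \,\right].
\]
Each $g_{j,i}(r)$ is a Laurent polynomial in $r$ with rational coefficients; multiplying $g_{j,i}(r)$ by $r^N$ for an $N$ large enough to clear all negative powers of $r$ does not change any sign when $r > 0$, and turns the formula into a sentence over the ordered field $(\R, +, \cdot, 0, 1, <)$ with rational coefficients only. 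By Tarski--Seidenberg this theory is decidable, so the truth value of the sentence is computable from the input basis $\bg_1, \ldots, \bg_m$, which yields the claimed decision procedure.

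No serious obstacle is expected. The only substantive point is the first reduction, namely that letting $\phi_j$ range over all of $\A$ rather than over $\R$ does not enlarge the set of attainable values of $\bff(r)$ beyond $V(r)$; this is immediate because $\phi_j(r)$ is a single real number for each choice of $\phi_j$, while conversely every real constant already lies in $\A$. If one wished to avoid Tarski--Seidenberg and give a more combinatorial procedure, one could invoke Stiemke's lemma to rephrase failure as the existence of some $r > 0$ together with a nonzero $\mu \in \R_{\geq 0}^n$ satisfying $\sum_{i=1}^n \mu_i g_{j,i}(r) = 0$ for every $j$, but deciding this dual statement still requires real quantifier elimination, so nothing essential is gained.
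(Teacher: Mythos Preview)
Your proposal is correct and follows essentially the same approach as the paper: reduce to the observation that $\{\bff(r):\bff\in\mM\}$ equals the $\R$-span of $\bg_1(r),\ldots,\bg_m(r)$ (the paper isolates this as Lemma~\ref{lem:posd}), then express the condition as a first-order sentence over the reals and invoke Tarski. Your remark on clearing negative powers of $r$ is a small technical point the paper leaves implicit, and the Stiemke aside is extra but harmless.
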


\begin{restatable}{prop}{propdeclcinf}\label{prop:declcinf}
Let $* \in \{+, -\}$, $a \in \Z$ and $\mM$ be an $\A$-submodule of $\A^{n}$.
Given as input a finite basis of $\mM$, it is decidable whether there exists $\bff = (f_S, f_K, \cdots) \in \mM$ such that 
    \begin{equation}\label{eq:declcinf}
        \lc_*(\bff) \in \R_{>0}^{n} \quad \text{ and } \quad \deg_{*}\left(f_S\right) + a \geq \deg_{*}\left(f_K\right).
    \end{equation}
\end{restatable}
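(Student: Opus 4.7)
By applying the ring automorphism $X \mapsto X^{-1}$ of $\A$, which swaps $\deg_+ \leftrightarrow \deg_-$ and $\lc_+ \leftrightarrow \lc_-$, it suffices to treat the case $*=+$. Denote the $l$-th component of a vector $\bff \in \A^n$ by $f^{(l)}$. For each degree vector $D = (D_1, \ldots, D_n) \in \Z^n$, define the $\R$-vector subspace
\[
V_D \;:=\; \bigl\{\bff \in \mM : \deg_+(f^{(l)}) \leq D_l \text{ for all } l\bigr\},
\]
together with the $\R$-linear top-coefficient map $T_D : V_D \to \R^n$, $\bff \mapsto (\text{coefficient of } X^{D_l} \text{ in } f^{(l)})_l$. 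The required $\bff$ exists if and only if there is some $D \in \Z^n$ with $D_S + a \geq D_K$ and $T_D(V_D) \cap \R_{>0}^n \neq \emptyset$: in the forward direction, set $D_l := \deg_+(f^{(l)})$ for a witness $\bff$, which is well-defined since $\lc_+(\bff) \in \R_{>0}^n$ forces $f^{(l)} \neq 0$, and observe $T_D(\bff) = \lc_+(\bff)$; for the converse, any $\bff \in V_D$ with $T_D(\bff) \in \R_{>0}^n$ automatically satisfies $\deg_+(f^{(l)}) = D_l$ and $\lc_+(f^{(l)}) > 0$ for every $l$.

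The core algorithmic task is therefore to effectively describe $T_D(V_D)$ as $D$ varies. By the theory of Gr\"obner bases for submodules of $\A^n$, the subspace $T_D(V_D) \subseteq \R^n$ attains only finitely many distinct values as $D$ ranges over $\Z^n$, and the set of $D$'s realizing each value is a rational polyhedral cone (a chamber of the Gr\"obner fan of $\mM$) defined by finitely many linear inequalities in $D_1, \ldots, D_n$. Concretely, equip $\A^n$ with the $D$-weighted monomial order assigning weight $D_l - k$ to $X^k e_l$; then under the canonical identification $X^{D_l} e_l \leftrightarrow e_l$, the subspace $T_D(V_D)$ coincides with the weight-zero homogeneous component of the initial submodule $\mathrm{in}_D(\mM)$, which is computable by Buchberger-style reduction from the input basis. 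This handles the cancellation phenomenon: a naive degree bound on the coefficients $\phi_i$ in an expansion $\bff = \sum_i \phi_i \bg_i$ can undercount $V_D$ because of higher-order cancellations, but the Gr\"obner-fan description of $\mathrm{in}_D(\mM)$ captures them intrinsically.

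The algorithm compiles the finite list of pairs $(C_j, L_j)_{j=1}^{t}$, where $L_j \subseteq \R^n$ is the common value of $T_D(V_D)$ for $D \in C_j$. For each $j$, it tests (a) whether $L_j \cap \R_{>0}^n \neq \emptyset$ via linear programming over $\R$, and (b) whether $C_j$ contains an integer point satisfying $D_S + a \geq D_K$ via integer linear programming over $\Q$. Return YES iff some $j$ passes both tests. I expect the main technical hurdle to be adapting the Gr\"obner-fan machinery from the classical polynomial ring $\R[X]$ to the Laurent polynomial ring $\A = \R[X^{\pm}]$; this is handled by multiplying each basis element by a sufficiently large power of $X$ to clear negative exponents, invoking the classical construction, and translating the resulting chambers back to $\Z^n$ by shifting.
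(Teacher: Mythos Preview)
Your proposal is correct and follows essentially the same route as the paper. The paper's proof reformulates condition~\eqref{eq:declcinf} via Lemma~\ref{lem:lctoin} as the existence of $\balpha \in \Z^n$ with $\alpha_S - \alpha_K \leq a$ and $\coef(\init_{*,\balpha}(\bff)) \in \Rpp^n$ for some $\bff \in \mM$, then invokes the \emph{super Gr\"obner basis} of Einsiedler--Mouat--Tuncel to show (Lemma~\ref{lem:finalpha}) that only finitely many initial modules $\init_{*,\balpha}(\mM)$ arise and computes a representative $\balpha$ for each; finally Corollary~\ref{cor:r} reduces the positivity test for each representative to linear programming on $\sum r_i \coef(\init_{*,\balpha}(\bg_i))$. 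Your objects $T_D(V_D)$ are exactly the coefficient images of $\init_{+,-D}(\mM)$ at a fixed weight level, your Gr\"obner-fan chambers are precisely the polyhedra of Lemma~\ref{lem:finalpha} (parametrized by $D = -\balpha$, so that $D_S + a \geq D_K$ matches $\alpha_S - \alpha_K \leq a$), and your LP test in step~(a) is the content of Corollary~\ref{cor:r}. Two cosmetic remarks: your weight convention $D_l - k$ is inverted relative to the usual one (you want $k - D_l$ so that higher degree means higher weight and ``initial'' picks out top terms), and the chambers are affine polyhedra rather than cones since the defining inequalities $\alpha_l - \alpha_{l'} \gtreqless \deg_+(g_{i,l'}) - \deg_+(g_{i,l})$ have integer offsets---but as they are cylinders along the diagonal with integral data, nonemptiness over $\R$ already guarantees an integer point, so your ILP step~(b) is harmless if slightly stronger than needed.
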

Proposition~\ref{prop:declcd} and \ref{prop:declcinf} will be proven in Section~\ref{sec:locdec}.
The idea of proving Proposition~\ref{prop:declcd} is to reduce the statement to the first order theory of the reals; while for proving Proposition~\ref{prop:declcinf} we will use the \emph{super Gr\"{o}bner basis} introduced in the original proof of Einsiedler et al.~\cite{einsiedler2003does}.

We are now ready to prove our main theorem by bridging the remaining gaps.

\begin{restatable}{thrm}{thmid}\label{thm:id}
The Identity Problem and the Group Problem in $\Z \wr \Z$ are decidable.
\end{restatable}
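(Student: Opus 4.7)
The plan is to assemble the chain of reductions already prepared in the excerpt into a single decision procedure. By Lemma~\ref{lem:grptoid}, decidability of the Group Problem implies decidability of the Identity Problem, so it suffices to give an algorithm for the Group Problem. Given input $\mG \subseteq \Z \wr \Z$, I first partition its index set as $A = I \cup J \cup K$ via~\eqref{eq:defijk}. If $I = \emptyset$ or $J = \emptyset$, Proposition~\ref{prop:easy} settles the question by an integer programming call, so from now on I assume $I \neq \emptyset$ and $J \neq \emptyset$.

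In the main case, Corollary~\ref{cor:sys} says that $\langle \mG \rangle$ is a group if and only if there exists a double-full $S \subseteq I \times J$ together with polynomials in $\A^+ = \RXs$ satisfying the linear system~\eqref{eq:syscor1}--\eqref{eq:syscor3} and the degree bounds~\eqref{eq:deg+cor}--\eqref{eq:deg-cor}. Since $I \times J$ is finite, I enumerate its finitely many double-full subsets $S$ and handle each one separately; the algorithm outputs \textbf{yes} precisely when at least one value of $S$ succeeds. For a fixed $S$, I compute the $h_{(i,j),m}$ and $y_{k,m}$ from their definitions~\eqref{eq:hsys}--\eqref{eq:ysys} (explicitly, by expanding the rational functions $h_{(i,j)}$ and the polynomials $y_k$ according to residues modulo $d$). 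The set $\mM \subseteq \A^n$ defined in~\eqref{eq:defM} of solutions to~\eqref{eq:syscor1}--\eqref{eq:syscor3} is an $\A$-submodule, and by standard linear algebra over the principal ideal domain $\Q[X^{\pm}]$ (e.g., Bareiss-style elimination as referenced in the excerpt), I compute an explicit finite basis $\bg_1, \ldots, \bg_m \in (\Q[X^{\pm}])^n$ of $\mM$.

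With this basis in hand, Proposition~\ref{prop:locglob} replaces the existence of a single positive-coefficient solution with the conjunction of three local conditions. Condition~(i) of that proposition is decidable by Proposition~\ref{prop:declcd}. Condition~(ii) is an instance of Proposition~\ref{prop:declcinf} with $* = +$ and $a = 1$, and Condition~(iii) is an instance with $* = -$ and the appropriate choice of parameter. Thus all three conditions are effectively checkable from the basis of $\mM$, and the overall algorithm terminates with the correct answer.

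The main work, of course, is not in this assembly but in establishing the components: the graph-theoretic reduction behind Proposition~\ref{prop:wrtoeq} (and hence Corollary~\ref{cor:sys}), the generalized local-global principle of Proposition~\ref{prop:locglob} that must handle the degree constraints which were absent in~\cite{einsiedler2003does}, and the two decidability Propositions~\ref{prop:declcd}--\ref{prop:declcinf}. Given all of these as black boxes, the only minor point to verify is that the finite enumeration over double-full $S$ and the matching of parameters $(*, a)$ between Proposition~\ref{prop:locglob} and Proposition~\ref{prop:declcinf} is correct, which is straightforward.
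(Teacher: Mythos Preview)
Your proposal is correct and follows essentially the same assembly as the paper's proof: partition into $I,J,K$, handle the degenerate case via Proposition~\ref{prop:easy}, enumerate double-full $S$, compute a basis of $\mM$, then apply Proposition~\ref{prop:locglob} and decide its three local conditions via Propositions~\ref{prop:declcd} and~\ref{prop:declcinf}, finally invoking Lemma~\ref{lem:grptoid} for the Identity Problem. The one place where you are slightly vague is Condition~(iii): since Proposition~\ref{prop:declcinf} only provides the inequality shape $\deg_*(f_S)+a\ge\deg_*(f_K)$, matching $\deg_-(f_S)\le\deg_-(f_K)$ requires swapping the $S$ and $K$ coordinates and taking $*=-$, $a=0$, which the paper states explicitly.
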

\begin{proof}
    First we show decidability of the Group Problem.
    A detailed description of the full procedure is given in Appendix~\ref{app:alg} as a reference point for the readers.
    Given a finite set $\mG = \{(y_a, b_a) \mid a \in A\}$ in $\Z \wr \Z$, define the index sets $I, J, K$ as in \eqref{eq:defijk}.
    If $I$ or $J$ is empty, then Proposition~\ref{prop:easy} shows that the Group Problem for $\mG$ is decidable.
    If $I$ and $J$ are not empty, we enumerate all double-full sets $S \subset I \times J$.
    For each $S$ we compute a finite basis of $\mM$ defined in \eqref{eq:defM}.
    Corollary~\ref{cor:sys} together with Proposition~\ref{prop:locglob} shows that the Group Problem for $\mG$ has a positive answer if and only if for some $S$, the three conditions in Proposition~\ref{prop:locglob} are all satisfied.
    For each $S$, Condition~(i) can be decided using Proposition~\ref{prop:declcd}; Condition~(ii) can be decided using Proposition~\ref{prop:declcinf} by taking $* = +, a = 1$; Condition~(iii) can be decided using Proposition~\ref{prop:declcinf} by swapping the coordinates $S$ and $K$ and taking $* = -, a = 0$.
    Therefore the Group Problem in $\Z \wr \Z$ is decidable.
    By Lemma~\ref{lem:grptoid}, the Identity Problem in $\Z \wr \Z$ is also decidable.
\end{proof}

\section{From semigroup to polynomial equations}\label{sec:wrtopoly}

\subsection{Definition of $\mG$-graphs}
Section~\ref{sec:wrtopoly} is dedicated to the proof of Proposition~\ref{prop:wrtoeq}.
In this subsection, we will define the notion of a \emph{$\mG$-graph}.
Let $A$ be a finite set of indices.
Let $\mG = \{(y_a, b_a) \mid a \in A\}$ be a finite set of elements in the group $\Z \wr \Z$ or $\Q(X) \rtimes \Z$.
We define the following notion of a $\mG$-graph.

\begin{defn}[$\mG$-graphs]
A \emph{$\mG$-graph} is a directed multigraph $G$, whose set of vertices $V(G)$ is a finite subset of $\Z$, and its edges are each labeled with an index in $A$.
Furthermore, if an edge from vertex $d_1$ to vertex $d_2$ has label $a$, then $d_2 = d_1 + b_a$.
\end{defn}

For a word $w$ over the alphabet $\mG$, we associate to it a unique $\mG$-graph $G(w)$, defined as follows.
Write $w = (y_{a_1}, b_{a_1}) \cdots (y_{a_p}, b_{a_p})$. For each $i = 0, \ldots, p-1$, we add an edge starting at the vertex $b_{a_1} + \cdots + b_{a_{i}}$, ending at the vertex $b_{a_1} + \cdots + b_{a_{i+1}}$, with the label $a_i$. (If $i = 0$ then the edge starts at $0$ and ends at $b_{a_1}$.)
The graph $G(w)$ is then obtained by taking the connected component of the vertex $0$.
See Figure~\ref{fig:Gw} for the illustration of an example.

\begin{figure}[ht]
    \centering
    \begin{minipage}[t]{.45\textwidth}
        \centering
        \includegraphics[width=0.9\textwidth,height=0.85\textheight,keepaspectratio, trim={6.8cm 2.5cm 5.0cm 2.5cm},clip]{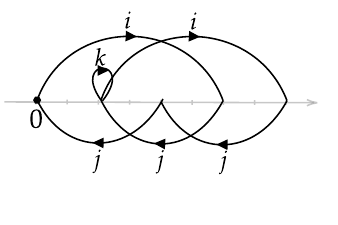}
        \caption{Illustration of $G(w)$. Here, \\ $A = \{i, j, k\}$, $\mG = \{(y_i, 6), (y_j, -4), (y_k, 0)\}$, \\ $w = (y_i, 6)(y_j, -4)(y_k, 0)(y_i, 6)(y_j, -4)(y_j, -4)$.}
        \label{fig:Gw}
    \end{minipage}
    \hfill
    \begin{minipage}[t]{0.45\textwidth}
        \centering
        \includegraphics[width=0.9\textwidth,height=0.85\textheight,keepaspectratio, trim={4.5cm 1cm 6cm 1cm},clip]{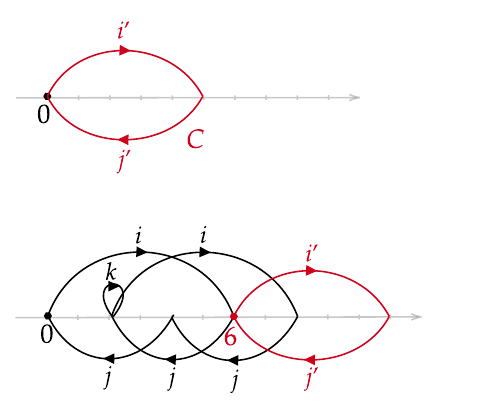}
        \caption{Attaching the circuit $C$ to $G(w)$ at vertex $6$.}
        \label{fig:attach}
    \end{minipage}
\end{figure}

By reading the letters in $w$ one by one and tracing the corresponding edges of $G(w)$, we obtain an Euler path of $G(w)$.
Furthermore, if the word $w$ represents the neutral element (or any element of the form $(y, 0)$), then this Euler path is an Euler circuit.

We point out that the element which $w$ represents is uniquely determined by $G(w)$:

\begin{fct}[Product of associated graph]
Let $w$ be a word over the alphabet $\mG$, and let $G = G(w)$ be its associated $\mG$-graph.
For an edge $e \in E(G)$, denote by $\ell(e)$ the label of $e$, denote by $s(e) \in \Z$ the starting vertex of $e$, then $w$ represents the element
\begin{equation}\label{eq:edges}
\left(\sum_{e \in E(G)} X^{s(e)} \cdot y_{\ell(e)}, \sum_{e \in E(G)} b_{\ell(e)}\right).
\end{equation}
\end{fct}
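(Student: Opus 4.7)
The plan is to prove this by straightforward induction on the length $p$ of the word $w$, using the multiplication rule from Definition~\ref{def:wr}. The key observation is that the edges of $G(w)$ are in natural bijection with the letters of $w$, and the starting vertex of the $i$-th edge encodes precisely the partial exponent sum that appears when one unfolds the product $(y_{a_1}, b_{a_1}) \cdots (y_{a_p}, b_{a_p})$.

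More concretely, I would first establish by induction on $p$ the explicit product formula
\[
(y_{a_1}, b_{a_1}) \cdots (y_{a_p}, b_{a_p}) = \left( \sum_{i=1}^{p} X^{b_{a_1} + \cdots + b_{a_{i-1}}} \cdot y_{a_i}, \; \sum_{i=1}^{p} b_{a_i} \right),
\]
with the convention that the empty sum $b_{a_1} + \cdots + b_{a_0}$ equals $0$. The base case $p = 1$ is immediate, and the inductive step is a one-line computation using $(y, b) \cdot (y', b') = (y + X^b y', b + b')$. Then I would match this formula against the definition of $G(w)$: by construction, the $i$-th edge $e_i$ has label $\ell(e_i) = a_i$ and starts at the vertex $s(e_i) = b_{a_1} + \cdots + b_{a_{i-1}}$, so the above sums rewrite exactly as $\sum_{e \in E(G)} X^{s(e)} y_{\ell(e)}$ and $\sum_{e \in E(G)} b_{\ell(e)}$.

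The only point requiring a remark is that $G(w)$ was defined as the connected component of the vertex $0$ in the multigraph built from $w$, so one has to verify that this does not discard any edge. But every edge $e_i$ added from $w$ is reachable from $0$ through the path $e_1, e_2, \ldots, e_i$, so all edges produced by $w$ lie in the connected component of $0$. Hence the sums over $E(G)$ and over the letters of $w$ coincide, and the identity follows. There is no real obstacle here; this is essentially a bookkeeping lemma whose purpose is to set up the graph-theoretic language used in subsequent sections.
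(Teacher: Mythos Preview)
Your argument is correct and is exactly the natural verification one would carry out; the paper itself states this result as a \emph{Fact} without proof, treating it as an immediate consequence of the multiplication rule in Definition~\ref{def:wr} and the construction of $G(w)$. Your remark that all edges lie in the connected component of $0$ (because they form a single walk starting at $0$) is the only point worth making explicit, and you handle it correctly.
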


For an arbitrary $\mG$-graph $G$, the element in Expression~\eqref{eq:edges} will be called the \emph{product} of the graph $G$.
It is easy to see that, if $G$ contains an Eulerian path, then by following this path we obtain a word $w$ that represents the product of $G$.

Let $C$ be a another Eulerian $\mG$-graph (seen as a circuit). 
We define the following action of \emph{attaching $C$ to $G$ at vertex $v$}:
For each edge $e$ of $C$, starting at vertex $s(e)$ with label $\ell(e)$, we add an edge $e'$ to $G$, starting at vertex $s(e) + v$ with label $\ell(e)$. See Figure~\ref{fig:attach} for the illustration of an example.
The product of the resulting graph is uniquely determined by $G$, $C$ as well as $v$:

\begin{fct}[Effect of attaching circuit to a graph]\label{fct:attach}
Let $G$ be an arbitrary $\mG$-graph and $C$ be an Eulerian $\mG$-graph.
Denote by $(y_G, b_G)$ the product of $G$ and by $(y_C, 0)$ the product of $C$.
Then attaching $C$ to $G$ at vertex $v$ results in a graph with product $\left(y_G + X^v \cdot y_C, b_G\right)$.
\end{fct}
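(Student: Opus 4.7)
The plan is to prove this by directly unpacking the two definitions involved: the product of a $\mG$-graph given in Expression~\eqref{eq:edges}, and the attachment operation defined just before the statement. Let $G'$ denote the graph obtained by attaching $C$ to $G$ at vertex $v$. By construction the edge set $E(G')$ splits as a disjoint union of $E(G)$ and one copy of $E(C)$, where each copied edge $e'$ has $\ell(e') = \ell(e)$ and $s(e') = s(e) + v$. (One can also quickly verify that $G'$ is itself a valid $\mG$-graph: the endpoint of $e'$ is $s(e) + v + b_{\ell(e)}$, matching the required relation between starting vertex, label, and ending vertex.)

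Next I would apply the formula \eqref{eq:edges} to $G'$ and split the sum along the above decomposition of $E(G')$. The $X^{s(e)}$ contribution from a copied edge becomes $X^{s(e)+v} = X^v \cdot X^{s(e)}$, so after factoring $X^v$ out of the second sum the $y$-coordinate separates cleanly as
\[
\sum_{e \in E(G)} X^{s(e)} y_{\ell(e)} \;+\; X^v \sum_{e \in E(C)} X^{s(e)} y_{\ell(e)} \;=\; y_G + X^v \cdot y_C.
\]
The $b$-coordinate separates analogously as $b_G + 0 = b_G$, where the vanishing of the second term uses precisely the hypothesis that the product of $C$ has $b$-coordinate zero.

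There is no real obstacle: the statement is a bookkeeping identity that follows at once from the definitions. The only point worth flagging is that the Eulerian assumption on $C$ is not invoked directly in the computation; it enters only through the accompanying hypothesis that the product of $C$ has trivial $b$-coordinate, which is automatic for Eulerian $\mG$-graphs since an Euler circuit returns to its starting vertex and therefore the sum of $b_{\ell(e)}$ over its edges must be zero.
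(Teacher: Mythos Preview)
Your proposal is correct. The paper states this as a ``Fact'' without proof, treating it as an immediate consequence of the product formula~\eqref{eq:edges} and the definition of attachment; your argument spells out exactly that computation.
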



\subsection{Group Problem implies polynomial equations}\label{subsec:grptopoly}
In this subsection, we prove the ``only if'' part of Proposition~\ref{prop:wrtoeq}.
We will show that if $\langle \mG \rangle$ is a group then there exists a double-full set $S \subset I \times J$ and polynomials $f_{(i,j)}, f_k \in \NXds$ for $(i, j) \in S, k \in K$ satisfying (i)-(iii) of Proposition~\ref{prop:wrtoeq}.

Recall that $d \coloneqq \gcd\left( \{b_a \mid  (y_a, b_a) \in \mG\} \right).$
Define the alphabet $\hG$ of \emph{radical} elements:
\[
\hG \coloneqq \left\{(\hy_a, \hb_a) \;\middle|\; (y_a, b_a) \in \mG \right\},
\]
where
\begin{align*}
    (\hy_a, \hb_a) \coloneqq
    \begin{cases}
    \left(\frac{y_a}{1 + X^d + \cdots + X^{b_a - d}}, d\right) \quad & a \in I \quad \text{ (or equivalently, $b_a > 0$)}, \\
    \left(\frac{y_a}{1 + X^{-d} + \cdots + X^{- (|b_a| - d)}}, -d\right) \quad & a \in J \quad \text{ (or equivalently, $b_a < 0$)}, \\
    (y_a, 0) \quad & a \in K \quad \text{ (or equivalently, $b_a = 0$)}.
    \end{cases}
\end{align*}
Note that these elements are in $\Q(X) \rtimes \Z$ instead of $\Z \wr \Z$.
Direct computation shows that
\begin{equation}\label{eq:lettorad}
(\hy_a, \hb_a)^{|b_a|/d} = (y_a, b_a)
\end{equation}
for $a \in I \cup J$.
Equation~\eqref{eq:lettorad} can also be taken as the definition of $(\hy_a, \hb_a)$.

Since $\langle \mG \rangle$ is a group, by Lemma~\ref{lem:grpword} there exists a full-image word $w \in \mG^*$ that represents the neutral element.
Replacing the letters $(y_a, b_a)$ in $w$ by the words 
$
\underbrace{(\hy_a, \hb_a) \cdots (\hy_a, \hb_a)}_{|b_a|/d \text{ times}}
$
for every $a \in I \cup J$, we obtain a word $\hw \in \hG^*$.
By Equation~\eqref{eq:lettorad}, $\hw$ also represents the neutral element.
To the word $\hw$ we associate a $\hG$-graph $G(\hw)$.
See Figure~\ref{fig:decomp} for the illustration of an example of $G(\hw)$; one can compare it with Figure~\ref{fig:Gw} which illustrates $G(w)$ for the same $w$.



\begin{figure}[ht]
    \centering
    \begin{minipage}[t]{.45\textwidth}
        \centering
        \includegraphics[width=1.0\textwidth,height=1.0\textheight,keepaspectratio, trim={6.8cm 3.3cm 6.3cm 3.0cm},clip]{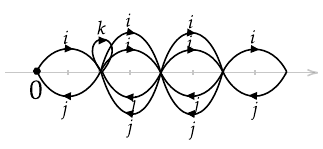}
        \caption{Decomposition of $G(\hw)$. Here $\mG$ and $w$ are the same as in Figure~\ref{fig:Gw}.}
        \label{fig:decomp}
    \end{minipage}
    \hfill
    \begin{minipage}[t]{0.45\textwidth}
        \centering
        \includegraphics[width=1.0\textwidth,height=1.0\textheight,keepaspectratio, trim={6cm 3.3cm 6.2cm 3.1cm},clip]{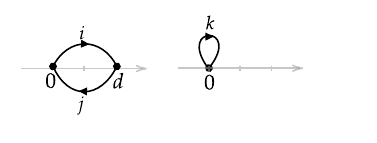}
        \caption{Primitive circuits of type $(i,j)$ and $k$.}
        \label{fig:primitive}
    \end{minipage}
\end{figure}

Since $\hw$ represents the neutral element, the graph $G(\hw)$ is Eulerian.
Since $\hb_a = \pm d$ or $0$ for all indices $a \in I \cup J \cup K$, the graph $G(\hw)$ can be decomposed into two classes of smaller circuits.
The first class of circuits is an edge with some label $i \in I$ (edge directed to the right) followed by an edge with some label $j \in J$ (edge directed to the left); we call such a circuit \emph{of the type} $(i, j)$.
The second class is a loop with label $k \in K$; we call such a circuit \emph{of the type} $k$.
We call these two classes of circuits \emph{primitive}.
See Figure~\ref{fig:primitive} for an illustration.


\begin{restatable}{lem}{lemdecomp}\label{lem:decomp}
    The graph $G(\hw)$ can be constructed by starting with an edgeless graph with a single vertex $0$ and gradually attaching primitive circuits.
\end{restatable}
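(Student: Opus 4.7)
My plan is to prove the lemma in two stages: first, exhibit a decomposition of the edges of $G(\hw)$ into primitive circuits; second, show that these circuits can be ordered so that each one, at the moment it is attached, already shares at least one vertex with the partial graph built so far.

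For the first stage, every edge in $G(\hw)$ has displacement $+d$, $-d$, or $0$, since the letters of $\hG$ satisfy $\hb_a \in \{+d, -d, 0\}$. Moreover $G(\hw)$ is Eulerian, as tracing $\hw$ yields an Euler circuit based at $0$. Each $K$-labeled loop is already a primitive circuit of type $k$, so only the $I$- and $J$-edges need to be paired. For each integer $v$, I would consider the cut separating vertices $\le v$ from vertices $> v$; balancedness of such a cut in an Eulerian graph forces the number of $I$-edges from $v$ to $v+d$ to equal the number of $J$-edges from $v+d$ to $v$. Pairing them bijectively yields one primitive $(i,j)$-circuit per $I$-edge, and together with the $K$-loops this partitions $E(G(\hw))$ into primitive circuits.

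For the second stage, let $C_1, \ldots, C_N$ denote the primitive circuits from the first stage and let $G_0$ be the edgeless graph with vertex set $\{0\}$. I would build $G_0, G_1, \ldots, G_N$ inductively: at step $t \ge 1$, pick any still-unused primitive circuit whose vertex set meets $V(G_{t-1})$ and attach it at a shared vertex to form $G_t$. The only point needing justification is that such a circuit always exists while $t < N$. If it did not, the used circuits would all have vertices in $V(G_t)$, the unused ones all in $V(G(\hw)) \setminus V(G_t)$, and since the circuits partition the edges of $G(\hw)$, no edge of $G(\hw)$ would cross between $V(G_t)$ and its complement. As $0 \in V(G_t)$ and the complement is non-empty, this would contradict connectedness of $G(\hw)$, which follows from the existence of the Euler circuit from $\hw$ based at $0$.

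The main obstacle is keeping the decomposition and the attachment order coherent, since a carelessly chosen pairing could in principle force an attachment at a vertex not yet present in the partial graph. The argument above sidesteps this by decoupling the two steps: any decomposition into primitive circuits produces a connected incidence graph (two circuits adjacent iff they share a vertex), because $G(\hw)$ itself is connected, so a breadth-first search starting from the circuits containing $0$ delivers the desired order.
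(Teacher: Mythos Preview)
Your proof is correct and takes a genuinely different route from the paper's. The paper argues by induction on the number of edges: it first strips off all $K$-loops, then repeatedly removes a primitive $(i,j)$-circuit incident to the \emph{maximum} vertex $m$ of the current graph, the key observation being that $m$'s only possible neighbour is $m-d$, so removing that circuit (and possibly the vertex $m$) leaves the graph connected and Eulerian. Your approach instead decomposes all the edges at once via the cut-balance argument and then, in a second independent step, orders the resulting circuits using connectedness. The paper's argument is a bit shorter and more self-contained; your two-stage argument is more modular and makes explicit the general principle that \emph{any} partition of the edge set of a connected Eulerian graph into subcircuits can be linearly ordered so that each circuit attaches to the union of the previous ones---a useful lemma in its own right. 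Your cut-balance count is also a nice alternative to the paper's ``peel from the top'' trick for showing that the $I$- and $J$-edges between consecutive vertices match up.
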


Recall the definition of $h_{(i,j)}$ in Equation~\eqref{eq:defhij}.
In fact, $h_{(i,j)}$ is the product of a primitive circuit of type $(i,j)$, meaning
$
(\hy_i, \hb_i) \cdot (\hy_j, \hb_j) = (h_{(i,j)}, 0).
$
Similarly, the product of a circuit of type $k$ is $(h_k, 0) \coloneqq (y_k, 0)$.
By Lemma~\ref{lem:decomp}, $G(\hw)$ can be decomposed into primitive circuits.
For each primitive circuit $C$ in the decomposition, denote by $s(C) \in d \Z$ the vertex where $C$ is attached, and by $type(C)$ the type of $C$.
Denote by $\mC$ the set of circuits in the decomposition of $G(\hw)$ into primitive circuits.
By Fact~\ref{fct:attach}, the product of $G(\hw)$ can be written as
\begin{equation}\label{eq:circuits}
    \left(\sum_{C \in \mC} X^{s(C)} \cdot h_{type(C)}, 0\right) = (0, 0).
\end{equation}

For each $(i, j) \in I \times J$ and $k \in K$, define the following  polynomials in $\N[X^{\pm d}]$:
\begin{equation}\label{eq:defonlyiff}
f_{(i,j)} \coloneqq \sum_{\substack{C \in \mC \text{ of type } (i,j)}} \left(X^{d}\right)^{\frac{s(C)}{d}},
\quad 
f_{k} \coloneqq \sum_{\substack{C \in \mC \text{ of type } k}} \left(X^{d}\right)^{\frac{s(C)}{d}}.
\end{equation}
Let 
$
S \coloneqq \{(i, j) \in I \times J \mid f_{(i,j)} \neq 0\}
$.
We point out that $f_k \neq 0$ for all $k \in K$, because $\hw$ is full-image, meaning $G(\hw)$ contains a loop of label $k$ for each $k \in K$.
Equation~\eqref{eq:circuits} becomes
\begin{equation}\label{eq:onlyif}
    \sum_{(i, j) \in S} f_{(i,j)} \cdot h_{(i,j)} + \sum_{k \in K} f_k \cdot y_{k} = 0.
\end{equation}
This is exactly Condition~(i) of Proposition~\ref{prop:wrtoeq}.
It suffices to show the following to complete the proof of the first implication of Proposition~\ref{prop:wrtoeq}.

\begin{restatable}{lem}{lemonlyif}\label{lem:onlyif}
Let $f_{(i,j)}, f_k \in \N[X^{\pm d}]$ and $S \subset I \times J$ be defined as above, then:
\begin{enumerate}[(i)]
    \item $S$ is double-full.
    \item $\deg_{+}\left(\sum_{(i, j) \in S} f_{(i,j)}\right) + d \geq \deg_{+}\left(\sum_{k \in K} f_{k}\right)$.
    \item $\deg_{-}\left(\sum_{(i, j) \in S} f_{(i,j)}\right) \leq \deg_{-}\left(\sum_{k \in K} f_{k}\right)$.
\end{enumerate}
\end{restatable}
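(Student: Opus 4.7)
The plan is to translate graph-theoretic properties of $G(\hw)$ into statements about the polynomials $f_{(i,j)}$ and $f_k$, using the bijective correspondence between primitive circuits $C$ in the decomposition of Lemma~\ref{lem:decomp} and the monomials $X^{s(C)}$ that they contribute. The three ingredients I will use repeatedly are: $G(\hw)$ is connected (by definition as the connected component of vertex $0$); every label in $I \cup J \cup K$ appears on some edge of $G(\hw)$, because $w$ (and therefore $\hw$) is full-image; and all vertices of $G(\hw)$ lie in $d\Z$.

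For part (i), I would argue that since $w$ is full-image, each label $i \in I$ appears in $\hw$ and hence yields at least one edge of $G(\hw)$ with label $i$. This edge lies in a unique primitive circuit, which must be of type $(i,j)$ for some $j \in J$ since loops carry labels only from $K$. Therefore $f_{(i,j)} \neq 0$ and $(i,j) \in S$; the same reasoning applied to each $j \in J$ establishes double-fullness of $S$.

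For parts (ii) and (iii), I would analyze the extremal vertices $V_+ \coloneqq \max V(G(\hw))$ and $V_- \coloneqq \min V(G(\hw))$, both lying in $d\Z$. Any loop of type $k$ sits at a vertex $v$ with $V_- \leq v \leq V_+$, giving $\deg_-(\sum_k f_k) \geq V_-$ and $\deg_+(\sum_k f_k) \leq V_+$. On the other hand, an $(i,j)$-primitive circuit spans the vertices $s(C)$ and $s(C)+d$, so a circuit touching $V_+$ is forced to have $s(C) = V_+ - d$ and one touching $V_-$ must have $s(C) = V_-$, since the alternatives would exit $[V_-, V_+]$. When $V_+ > 0$, connectedness of $G(\hw)$ forces a non-loop edge incident to $V_+$; the primitive circuit it belongs to has $s(C) = V_+ - d$, yielding $\deg_+(\sum_{(i,j) \in S} f_{(i,j)}) + d \geq V_+ \geq \deg_+(\sum_k f_k)$. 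In the boundary case $V_+ = 0$, the assumption $I \neq \emptyset$ forces a right-going edge somewhere, hence $V_- \leq -d < 0$, and applying connectedness at vertex $0$ produces an $(i,j)$-circuit with $s(C) = -d$, so $\deg_+(\sum_{(i,j) \in S} f_{(i,j)}) + d \geq 0 \geq \deg_+(\sum_k f_k)$. Part (iii) follows by the symmetric analysis at $V_-$.

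The only genuine subtlety is ensuring that in each regime the required $(i,j)$-circuit actually exists in the decomposition; this is settled cleanly by the connectedness of $G(\hw)$ together with the assumption $I, J \neq \emptyset$, which prevents the pathological configurations in which all non-loop activity is confined away from the extremal vertex. Beyond this case split, the proof is purely combinatorial and requires no delicate estimation.
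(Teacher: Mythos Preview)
Your proposal is correct and follows essentially the same approach as the paper. The paper's proof is terser: it simply asserts that $\max V(G(\hw)) = \deg_{+}\big(\sum_{(i,j)\in S} f_{(i,j)}\big) + d$ and $\min V(G(\hw)) = \deg_{-}\big(\sum_{(i,j)\in S} f_{(i,j)}\big)$, leaving the reader to observe that the extremal vertex must be an endpoint of some $(i,j)$-circuit by connectedness. Your explicit case split at $V_+ = 0$ (and symmetrically at $V_- = 0$) fills in precisely the step the paper glosses over, so your argument is a strict elaboration of theirs rather than a different route.
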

\begin{proof}[Sketch of proof]
    For (i), $S$ is double full since $G(\hw)$ contains edges of each type of labels.
    For (ii) and (iii), it suffices to notice that $G(\hw)$ must be connected while $\max(V(G(\hw))) = \deg_{+}\big(\sum_{(i, j) \in S} f_{(i,j)}\big) + d$ and $\min(V(G(\hw))) = \deg_{-}\big(\sum_{(i, j) \in S} f_{(i,j)}\big)$.
\end{proof} 

\begin{proof}[Proof of ``only if'' part of Proposition~\ref{prop:wrtoeq}]
    If $\langle \mG \rangle$ is a group, then there exists a full-image word $w \in \mG^*$ that represents the neutral element.
    Consider $G(\hw)$ and let $f_{(i,j)}, f_k \in \NXds, (i,j) \in S, k \in K$, be as defined in \eqref{eq:defonlyiff}.
    The Conditions~(i)-(iii) of Proposition~\ref{prop:wrtoeq} follow directly from Equation~\eqref{eq:onlyif} and Lemma~\ref{lem:onlyif}.
\end{proof}

\subsection{Polynomial equation implies Group Problem}\label{subsec:polytogrp}
In this subsection, we prove the ``if'' part of Proposition~\ref{prop:wrtoeq}.
Given a double-full set $S \subset I \times J$ and positive polynomials $f_{(i,j)}, f_k \in \NXds$ for $(i, j) \in S, k \in K$ that satisfy Conditions (i)-(iii) of Proposition~\ref{prop:wrtoeq}, we will construct an Eulerian $\mG$-graph $G$ with product zero.
The main difficulty here is that the length of the edges of a $\mG$-graph are no longer identical, as opposed to $\hG$-graphs.
Therefore one can no longer decompose $\mG$-graphs into primitive circuits.
The key idea is a work-around that simulates primitive circuits using longer circuits.

For $(i, j) \in I \times J$, we define an \emph{elementary circuit of the type} $(i, j)$ to be a circuit that starts with $|b_j|$ edges of label $i$, followed by $b_i$ edges of label $j$.
See Figure~\ref{fig:elem} for an example.

\begin{figure}[ht]
    \centering
    \includegraphics[width=0.6\textwidth,height=0.5\textheight,keepaspectratio, trim={4.5cm 3cm 4cm 3cm},clip]{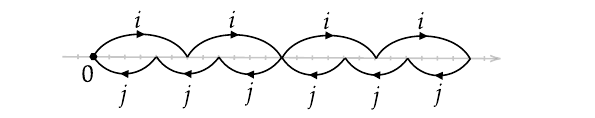}
    \caption{An elementary circuit of type $(i,j)$. Here, $b_i = 6, b_j = -4$.}
    \label{fig:elem}
\end{figure}
\begin{restatable}{lem}{lemA}\label{lem:A}
    Suppose $S \subset I \times J$ be double-full.
    There exists an Eulerian $\mG$-graph $A$ with $d \in V(A)$, obtained by attaching together elementary circuits of types in $S$.
\end{restatable}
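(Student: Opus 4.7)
The plan is to construct $A$ by gluing one elementary circuit of each type in $S$ at the common vertex $d$, exploiting the freedom to attach each circuit so that its starting vertex becomes $d$. The key observation is that if one attaches an elementary circuit (with canonical starting vertex $0$) at vertex $d$, then by the definition of the attaching operation the shifted copy has starting vertex $d$ and therefore contains $d$ in its vertex set. Doing this for every type in $S$ yields a graph in which $d$ is a common vertex of all the circuits, so the union is automatically connected and contains $d$.

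First I enumerate $S = \{(i_1, j_1), \ldots, (i_t, j_t)\}$; this is possible because the double-full condition, together with the standing assumption that $I$ and $J$ are both non-empty, forces $S \neq \emptyset$. For each $\ell$ I let $C_\ell$ denote the elementary circuit of type $(i_\ell, j_\ell)$, viewed as a canonical $\mG$-graph with starting vertex $0$. I then construct $A$ inductively. Set $A_0$ to be the edgeless graph consisting of the single vertex $d$. For $\ell = 1, \ldots, t$, obtain $A_\ell$ by attaching $C_\ell$ to $A_{\ell-1}$ at vertex $d$; this is valid because $d \in V(A_{\ell-1})$ throughout the induction. Take $A \coloneqq A_t$.

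By construction $A$ is a $\mG$-graph containing the vertex $d$ and is obtained by attaching together elementary circuits of types in $S$. Each $C_\ell$ is Eulerian as a single circuit, and attaching an Eulerian circuit to an Eulerian graph at a shared vertex preserves Eulerianness: the in- and out-degrees at the shared vertex both increase by the same amount, every other vertex is unaffected, and the graph stays connected through the shared vertex. Induction then gives that $A$ is Eulerian. There is no serious obstacle in this argument; the only real insight is that uniformly attaching every circuit at the single vertex $d$ resolves both the requirement $d \in V(A)$ and the connectedness of the attaching process at once, and the double-full assumption on $S$ is used only to ensure that $S$ is non-empty so that at least one circuit is available.
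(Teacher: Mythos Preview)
Your proof is correct for the lemma exactly as stated, but it takes a very different and much simpler route than the paper, and in doing so it bypasses what the paper actually needs from this lemma.

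The paper's construction begins at vertex $0$, and the whole point of the B\'ezout argument is to show that one can reach vertex $d$ starting from $0$ using only elementary circuits whose types lie in $S$. This is where the double-full hypothesis is genuinely used: the expression $b_{i_1} + \cdots + b_{i_p} - |b_{j_1}| - \cdots - |b_{j_q}| = d$ involves arbitrary indices from $I$ and from $J$, and double-fullness guarantees that each $i_k$ (resp.\ $j_k$) can be paired with some $S(i_k) \in J$ (resp.\ $S(j_k) \in I$) so that the required circuit type $(i_k, S(i_k))$ or $(S(j_k), j_k)$ lies in $S$. The resulting graph therefore contains both $0$ and $d$.

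Your construction instead anchors everything at the single vertex $d$, which makes the existence of an Eulerian $A$ with $d \in V(A)$ trivial and, as you correctly note, uses double-fullness only to ensure $S \neq \emptyset$. However, the graph you build has minimum vertex $d$ and in general does not contain $0$. This matters immediately afterwards: in Step~1 of the proof of Proposition~\ref{prop:if}, shifted copies of $A$ are attached at vertices $d, 2d, \ldots, N-d$, and the paper concludes that $0, d, \ldots, N-d \in V(A')$. Both the connectedness of $A'$ under these attachments and that conclusion rely on $0 \in V(A)$ (so that the $k$-th copy, shifted by $dk$, shares the vertex $dk$ with what has been built so far). With your $A$ this step would not go through. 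So while your argument settles the lemma as written, the lemma's stated hypothesis and conclusion are slightly weaker than what the surrounding proof actually requires, and the paper's B\'ezout construction is precisely what supplies the missing property $0 \in V(A)$.
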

\begin{figure}[ht]
    \centering
    \includegraphics[width=0.9\textwidth,height=1.0\textheight,keepaspectratio, trim={3.2cm 2.2cm 3.8cm 0.6cm},clip]{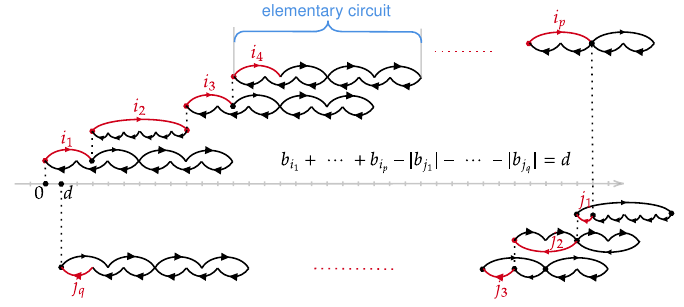}
        \caption{Graph $A$ from Lemma~\ref{lem:A}.}
        \label{fig:A}
\end{figure}

The idea of constructing $A$ is illustrated in Figure~\ref{fig:A}, with a detailed proof given in Appendix~\ref{app:proofs}.
We now characterize the product of $A$.
An elementary circuit of type $(i, j)$ attached at vertex $dv$ contributes $X^{dv} (1 + X^d + \cdots + X^{b_i |b_j| - d}) \cdot h_{(i,j)}$ to the product (see Equation~\eqref{eq:effectele}).
Since $A$ is a combination of elementary circuits, the product of $A$ can written as
$
\sum_{(i, j) \in S} a_{(i, j)} h_{(i,j)}
$
for some $a_{(i, j)} \in \N[X^{\pm d}]$.

Let $N \coloneqq \prod_{i \in I \cup J} |b_i|$.
Note that simultaneously multiplying all $f_{(i,j)}$ and $f_k$ by any polynomial $g \in \N[X^{\pm d}]^*$ does not change the fact that $f_{(i,j)}, f_k \in \N[X^{\pm d}]^*$, and they still satisfy Conditions~(i)-(iii) of Proposition~\ref{prop:wrtoeq}.
Also note that $1 + X^d + \cdots + X^{b_i |b_j| - d} \mid 1 + X^{d} + \cdots + X^{N - d}$.
Therefore, by simultaneously multiplying all $f_{(i,j)}$ and $f_k$ by $p \cdot (X^{-d} + 1 + \cdots + X^{N - 2d})^q$ for large enough $p, q \in \N$, we can suppose that for all $(i, j) \in S$,
\begin{equation}\label{eq:gij}
g_{(i,j)} \coloneqq \frac{f_{(i, j)} - a_{(i, j)} \cdot (1 + X^d + \cdots + X^{N - d})}{1 + X^d + \cdots + X^{b_i |b_j| - d}} \in \N[X^{\pm d}]^* \quad \text{is gap-free,}
\end{equation}
and
\begin{equation}\label{eq:degp}
\deg_{+} \left(f_{(i, j)}\right) > \deg_{+} (a_{(i, j)}) + N - d, \quad\quad \deg_{-} \left(f_{(i, j)}\right) < \deg_{-}(a_{(i, j)}).
\end{equation}

\begin{prop}\label{prop:if}
Suppose $S$ is double-full and $f_{(i,j)}, f_{k} \in \NXds, (i, j) \in S, k \in K$, satisfy Equations~\eqref{eq:gij}, \eqref{eq:degp} and Conditions~(ii)-(iii) of Proposition~\ref{prop:wrtoeq}, then there exists an Eulerian $\mG$-graph $G$ whose product is $\sum_{(i, j) \in S} f_{(i,j)} \cdot h_{(i,j)} + \sum_{k \in K} f_k \cdot y_{k}$.
\end{prop}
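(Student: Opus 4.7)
I would build the Eulerian $\mG$-graph $G$ in three successive phases, at every step attaching an Eulerian sub-graph (a copy of $A$, an elementary circuit, or a loop) at a vertex already belonging to the current graph. By Fact~\ref{fct:attach} each such attachment preserves the Eulerian property and modifies the product in a controlled way.

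The first phase produces a ``backbone'' consisting of $N/d$ translated copies of the graph $A$ from Lemma~\ref{lem:A}. I start with $A$ itself and successively attach fresh copies of $A$ at vertices $d, 2d, \ldots, N - d$; since $d \in V(A)$, each of these vertices lies in the current graph (as $kd$ belongs to the copy attached at $(k-1)d$), so the resulting graph $G_1$ is Eulerian and its product equals $(1 + X^d + \cdots + X^{N - d}) \sum_{(i,j) \in S} a_{(i,j)} h_{(i,j)}$. The second phase adds the remaining $h_{(i,j)}$-contributions: for each $(i,j) \in S$ and each monomial $c X^{dv}$ of $g_{(i,j)}$, I attach $c$ elementary circuits of type $(i,j)$ at vertex $dv$. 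By~\eqref{eq:effectele} the total contribution of these new circuits is $g_{(i,j)} (1 + X^d + \cdots + X^{b_i |b_j| - d}) h_{(i,j)}$, which by the definition~\eqref{eq:gij} of $g_{(i,j)}$ equals $\bigl( f_{(i,j)} - a_{(i,j)} (1 + X^d + \cdots + X^{N - d}) \bigr) h_{(i,j)}$; together with Phase~1 this yields exactly $f_{(i,j)} h_{(i,j)}$. Finally, in the third phase, for each $k \in K$ and each monomial $c X^{dv}$ of $f_k$, I attach $c$ loops of label $k$ at vertex $dv$, contributing $f_k y_k$ to the product. The total product of $G$ is therefore the desired $\sum_{(i,j) \in S} f_{(i,j)} h_{(i,j)} + \sum_{k \in K} f_k y_k$.

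The main obstacle is guaranteeing, at every attachment in Phases~2 and~3, that the chosen vertex $dv$ is already present in the current graph. For Phase~2 I would process the monomials of each $g_{(i,j)}$ outward, starting from a vertex already in $V(G_1)$: the gap-freeness in~\eqref{eq:gij} ensures that the support of $g_{(i,j)}$ is a contiguous block of multiples of $d$, so each outward attachment lands at a vertex newly created by the immediately preceding elementary circuit, while the strict degree inequalities~\eqref{eq:degp} ensure that this block contains the support of $a_{(i,j)}$ and extends strictly beyond it on both sides. For Phase~3, Conditions~(ii) and~(iii) of Proposition~\ref{prop:wrtoeq}, namely the inequalities~\eqref{eq:deg+} and~\eqref{eq:deg-}, together with $b_i |b_j| \geq d$ for $(i,j) \in S$, show that the support of each $f_k$ lies within the vertex range covered after Phase~2, so every loop attachment is at a vertex already present. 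Turning these degree conditions into an explicit, rigorous order of attachments, and treating boundary cases (for instance, ensuring that $V(A)$ is large enough to anchor the support of $a_{(i,j)}$), will constitute the technical heart of the argument.
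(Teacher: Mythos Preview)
Your three-phase construction is exactly the paper's, as is the product computation via Fact~\ref{fct:attach} and Equation~\eqref{eq:effectele}, and your Phase~3 reasoning matches the paper's. The genuine gap is in your Phase~2 connectivity mechanism. You plan to ``process the monomials of each $g_{(i,j)}$ outward'' so that ``each outward attachment lands at a vertex newly created by the immediately preceding elementary circuit''. This presumes that an elementary circuit of type $(i,j)$ attached at $dv$ contains the vertex $d(v\pm 1)$, so that you can then attach the next circuit there. But the vertex set of that circuit is $\{dv + k b_i : 0 \le k \le |b_j|\} \cup \{dv + k|b_j| : 0 \le k \le b_i\}$, and when both $b_i$ and $|b_j|$ are proper multiples of $d$ it need not contain $d(v\pm 1)$. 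Concretely, with $b_i = 6$, $b_j = -4$, $d = 2$, the circuit at $0$ has vertex set $\{0,4,6,8,12,16,18,20,24\}$, which misses $2$; you cannot attach the next circuit at $2$ in your sequential scheme. Gap-freeness of $g_{(i,j)}$ alone does not let you walk outward in steps of~$d$.

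The paper avoids this by arguing connectivity of the \emph{union} rather than maintaining it step by step, and by exploiting the full width of the backbone. The point of taking $N/d$ shifted copies of $A$ in Step~1 is that $V(A') \supseteq \{0,d,\ldots,N-d\}$, and since $b_i|b_j|$ divides $N$, this set meets every residue class of $d\Z$ modulo $b_i|b_j|$. A circuit attached at $ds$ contains both $ds$ and $ds + b_i|b_j|$, so circuits whose attachment points differ by $b_i|b_j|$ share a vertex; gap-freeness then ensures that within each residue class mod $b_i|b_j|$ the attachment points form an unbroken chain, and each such chain reaches its representative in $\{0,d,\ldots,N-d\}\subseteq V(A')$. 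This residue-class argument is also what yields the vertex-membership statement~\eqref{eq:conn}, which is exactly what your Phase~3 needs: that \emph{every} multiple of $d$ in the relevant range---not just attachment points---is already a vertex of $A''$.
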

\begin{proof}
    We construct $G$ in three steps. See Figure~\ref{fig:final} for an illustration.
    
    \textbf{Step 1: Constructing the foundation $A'$.}
    We start with the $\mG$-graph $A$ constructed in Lemma~\ref{lem:A}.
    We then attach to it another $N/d - 1$ copies of $A$, where the $k$-th copy is attached at vertex $dk$.
    The resulting graph is still Eulerian because $d \in V(A)$.
    We denote by $A'$ the $\mG$-graph obtained by this attachment.
    Then we have $0, d, \cdots, N-d \in V(A')$.
    
    \textbf{Step 2: Attaching elementary circuits of type $(i, j) \in S$.}
    For each pair $(i,j) \in S$, we want to attach elementary circuits of type $(i,j)$ to the graph $A'$, such that the total contribution of these circuits to the product is $g_{(i,j)} \cdot \left(1 + X^d + \cdots + X^{b_i |b_j| - d}\right) \cdot h_{(i, j)}$.
    Write $g_{(i,j)} = \sum_{t = p}^q \gamma_t X^{dt}$.
    We attach a total of $\sum_{t = p}^q \gamma_t$ elementary circuits of type $(i, j)$ to $A'$, where for $t = p, p+1, \ldots, q$, exactly $\gamma_t$ of these circuits are attached at the vertex $dt$.
    The resulting graph is connected (and Eulerian) because $g_{(i,j)}$ is gap-free.
    In fact, for each $t \in [\deg_-(g_{(i,j)})/d, \left(\deg_+(g_{(i,j)}) + b_i |b_j|\right)/d] \cap \Z$ and $u \in [0, N)$, such that $dt \equiv du \mod b_i |b_j|$, the vertex $dt$ is connected to $du \in V(A')$ by a chain of circuits of type $(i,j)$.
    Denote by $A''$ the resulting graph after doing the above attachments for all $(i,j) \in S$.
    Then
    \begin{equation}\label{eq:conn}
    dt \in V(A'') \; \text{ for all } \; t \in [\deg_-(g_{(i,j)})/d, \left(\deg_+(g_{(i,j)}) + b_i |b_j|\right)/d] \cap \Z, \; (i,j) \in S.
    \end{equation}
    
    \textbf{Step 3: Attaching loops of type $k \in K$.}
    For each $k \in K$, we want to attach loops of label $k \in K$ to $A''$, such that the total contribution of these loops to the product is $f_{k} \cdot y_{k}$.
    Write $f_k = \sum_{t = p}^q \beta_t X^{dt}$, we attach a total of $\sum_{t = p}^q \beta_t$ loops of label $k$ to $A''$, where for $t = p, p+1, \ldots, q$, exactly $\beta_t$ of these loops are attached at the vertex $dt$.

    We need to prove that the resulting graph $G$ is still connected (and hence Eulerian).
    In view of Property~\eqref{eq:conn} of the graph $A''$, it suffices to prove $\deg_-(f_k) \geq \min_{(i,j) \in S}\left\{\deg_-(g_{(i,j)})\right\}$ and $\deg_+(f_k) \leq \max_{(i,j) \in S}\left\{\deg_+(g_{(i,j)}) + b_i |b_j|\right\}$ for all $k \in K$.
    By Equations~\eqref{eq:gij} and \eqref{eq:degp}, we have
    $
    \deg_-(g_{(i,j)}) = \deg_-(f_{(i,j)})
    $
    and
    $
    \deg_+(g_{(i,j)})+ b_i |b_j| - d = \deg_+(f_{(i,j)})
    $
    for all $(i,j) \in S$.
    Then, by Conditions~(ii) and (iii) of Proposition~\ref{prop:wrtoeq}, we have 
    \begin{align*}
    & \deg_-(f_k) \geq \deg_-\left(\sum\nolimits_{k \in K} f_k\right) \geq \min_{(i,j) \in S}\{\deg_-(f_{(i,j)})\} = \min_{(i,j) \in S}\{\deg_-(g_{(i,j)})\}, \\
    & \deg_+(f_k) \leq \deg_+\left(\sum\nolimits_{k \in K} f_k\right) \leq \max_{(i,j) \in S}\{\deg_+(f_{(i,j)})\} + d = \max_{(i,j) \in S}\{\deg_-(g_{(i,j)}) + b_i |b_j|\},
    \end{align*}
    for all $k \in K$. Therefore, the resulting graph $G$ is still connected.

    Finally, we count the product of $G$.
    The product of $A'$ is $(1 + X^d + \cdots + X^{N - d}) \cdot \sum_{(i, j) \in S} a_{(i, j)} h_{(i, j)}$.
    The total contribution of elementary circuits in Step 2 is $\sum_{(i, j) \in S} g_{(i,j)} \cdot \left(1 + X^d + \cdots + X^{b_i |b_j| - d}\right) \cdot h_{(i, j)}$.
    The total contribution of loops in Step 3 is $\sum_{k \in K} f_{k} \cdot y_{k}$.
    Thus, by Equation~\eqref{eq:gij}, the product of $G$ is $\sum_{(i, j) \in S} f_{(i,j)} \cdot h_{(i,j)} + \sum_{k \in K} f_k \cdot y_{k}$.
\end{proof}

\begin{figure}[ht]
    \includegraphics[width=1.0\textwidth,height=1.0\textheight,keepaspectratio, trim={1.5cm 0.8cm 4.2cm 0.1cm},clip]{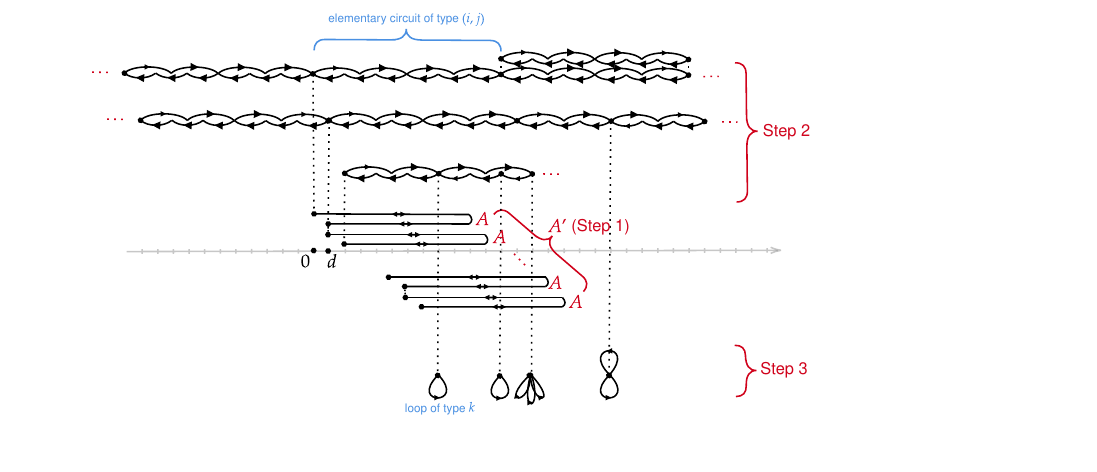}
        \caption{Graph $G$ from Proposition~\ref{prop:if}.}
        \label{fig:final}
\end{figure}

\begin{proof}[Proof of ``if'' part of Proposition~\ref{prop:wrtoeq}]
    We use Proposition~\ref{prop:if} to prove the ``if'' part of Proposition~\ref{prop:wrtoeq}.
    Suppose there exist a double-full set $S \subset I \times J$ and polynomials $f_{(i,j)}, f_k \in \NXds$ for $(i, j) \in S, k \in K$ that satisfy Conditions~(i)-(iii).
    Recall that by simultaneously multiplying all $f_{(i,j)}$ and $f_k$ by $p \cdot (X^{-d} + 1 + X^{d} + \cdots + X^{N - 2d})^q$ for large enough $p, q \in \N$, we can suppose Equations~\eqref{eq:gij} and \eqref{eq:degp} to be satisfied.
    Then Proposition~\ref{prop:if} gives an Eulerian $\mG$-graph $G$ whose product is $\left(\sum_{(i, j) \in S} f_{(i,j)} \cdot h_{(i,j)} + \sum_{k \in K} f_k \cdot y_{k}, 0\right)$.
    This product is equal to the neutral element due to Conditions~(i) of Proposition~\ref{prop:wrtoeq}.
    By following an Eulerian cycle of $G$, we obtain a word $w$ representing the neutral element. The word $w$ is full-image because $G$ contains elementary circuits of all types $(i,j) \in S$ and loops of all types $k \in K$, and because $S$ is double-full.
    Therefore $\langle \mG \rangle$ is a group by Lemma~\ref{lem:grpword}.
\end{proof}

\section{A local-global principle for polynomial equations}\label{sec:locglob}
In this section we prove Proposition~\ref{prop:locglob}.
We follow the line of proof for the original result of Einsiedler et al.~\cite{einsiedler2003does}, while introducing new elements concerning the degree constraints.
See Figure~\ref{fig:locglob} in Appendix~\ref{app:proofs} for an illustration of the proof.

\begin{restatable}{lem}{lemfend}\label{lem:fend}
    Suppose Conditions~(ii) and (iii) of Proposition~\ref{prop:locglob} hold.
    Then there exists $\bff_{end} = (f_{end, S}, f_{end, K}, \cdots) \in \mM$ such that 
    \begin{align}
        & \lc_+(\bff_{end}) \in \R_{>0}^{n}, \quad \deg_{+}\left(f_{end,S}\right) + 1 \geq \deg_{+}\left(f_{end,K}\right), \text{ and} \label{eq:endloc+} \\
        & \lc_-(\bff_{end}) \in \R_{>0}^{n}, \quad \deg_{-}\left(f_{end, S}\right) \leq \deg_{-}\left(f_{end, K}\right). \label{eq:endloc-}
    \end{align}
\end{restatable}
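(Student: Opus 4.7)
The plan is to build $\bff_{end}$ as a single element of $\mM$ that inherits its positive-end behaviour from $\bff_\infty$ and its negative-end behaviour from $\bff_0$. Since $\mM$ is an $\A$-submodule of $\A^n$ and $X^a, X^{-b} \in \A$ for all $a, b \in \Z$, every combination of the form
\[
\bff_{end} \coloneqq X^a \cdot \bff_\infty + X^{-b} \cdot \bff_0
\]
lies in $\mM$. I would fix $a, b \in \Z_{>0}$ large enough that the two summands do not interfere at either end, and then simply read off that the required leading-coefficient and degree conditions are satisfied.

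For the positive end, note that the hypothesis $\lc_+(\bff_\infty) \in \R_{>0}^n$ forces every coordinate of $\bff_\infty$ to be nonzero, hence to have a finite positive degree. Multiplication by $X^a$ shifts all of these positive degrees up by $a$, whereas the coordinates of $X^{-b} \bff_0$ have their positive degrees fixed once $b$ is fixed. Choosing $a$ (and hence $a+b$) large enough that $a + \deg_+\!\bigl((f_\infty)_i\bigr) > \deg_+\!\bigl((f_0)_i\bigr) - b$ for every coordinate $i$, the positive leading term of each coordinate of $\bff_{end}$ comes entirely from $X^a \bff_\infty$. Therefore $\lc_+(\bff_{end}) = \lc_+(\bff_\infty) \in \R_{>0}^n$ and $\deg_+\!\bigl((f_{end})_i\bigr) = a + \deg_+\!\bigl((f_\infty)_i\bigr)$ for every $i$. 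Specializing to $i = S, K$, the degree constraint $\deg_+(f_{end,S}) + 1 \geq \deg_+(f_{end,K})$ collapses, after cancelling $a$, to exactly Condition~(ii) of Proposition~\ref{prop:locglob}, which holds by assumption.

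By an entirely symmetric argument for the negative end, I would choose $b$ (equivalently $a+b$) large enough that $-b + \deg_-\!\bigl((f_0)_i\bigr) < a + \deg_-\!\bigl((f_\infty)_i\bigr)$ for every coordinate $i$; this is possible because every coordinate of $\bff_0$ has finite negative degree (again by the hypothesis $\lc_-(\bff_0) \in \R_{>0}^n$). Then the negative leading term of each coordinate of $\bff_{end}$ is furnished by $X^{-b} \bff_0$, giving $\lc_-(\bff_{end}) = \lc_-(\bff_0) \in \R_{>0}^n$ and $\deg_-\!\bigl((f_{end})_i\bigr) = -b + \deg_-\!\bigl((f_0)_i\bigr)$. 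The inequality $\deg_-(f_{end,S}) \leq \deg_-(f_{end,K})$ then reduces, after cancelling $-b$, to Condition~(iii). Picking a single pair $(a, b)$ satisfying both largeness requirements simultaneously yields an $\bff_{end} \in \mM$ for which \eqref{eq:endloc+} and \eqref{eq:endloc-} hold together.

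I do not anticipate any serious obstacle here: the lemma is a ``non-interference'' statement, and the only point that must be handled carefully is verifying that each coordinate of $\bff_\infty$ (resp.\ $\bff_0$) is nonzero, which is immediate from the strict positivity of $\lc_+(\bff_\infty)$ (resp.\ $\lc_-(\bff_0)$). The real work of Proposition~\ref{prop:locglob} lies elsewhere, in gluing the family $\{\bff_r\}_{r \in \R_{>0}}$ to this $\bff_{end}$ while preserving the degree constraints; the present lemma merely packages Conditions~(ii) and~(iii) into the convenient two-sided form that the subsequent gluing argument will consume.
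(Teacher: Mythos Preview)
Your proposal is correct and is essentially the same argument as the paper's: the paper sets $\bff_{end} \coloneqq \bff_0 + X^M \bff_\infty$ with a single shift $M$ chosen so that $\deg_-(f_{\infty,i}) + M > \deg_+(f_{0,i})$ for every coordinate, which is just your construction with $b = 0$ and $a = M$ (and a slightly stronger separation inequality). Your two-sided shift $X^a\bff_\infty + X^{-b}\bff_0$ achieves the identical non-interference, and the verification of \eqref{eq:endloc+}--\eqref{eq:endloc-} is the same in both cases.
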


Since $\lc_{\pm}(\bff_{end}) \in \R_{>0}^{n}$, there exists $c > 1$, such that $\bff_{end}(x) \in \R_{> 0}^{n}$ for all $x \in \R_{>0} \setminus [1/c, c]$.
Define the compact set $C \coloneqq [1/4c, 4c]$.

\begin{restatable}{lem}{lemfC}\label{lem:fC}
    Suppose Condition~(i) of Proposition~\ref{prop:locglob} hold.
    Let $C \subset \R_{>0}$ be a compact set, then there exists $\bff_{C} = (f_{C, S}, f_{C, K}, \cdots) \in \mM$ such that 
    $
        \bff_{C}(x) \in \R_{> 0}^{n}
    $
    for all $x \in C$.
\end{restatable}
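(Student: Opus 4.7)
The plan is to upgrade the pointwise positivity supplied by Condition~(i) to uniform positivity on the whole compact set $C$, by combining a standard compactness argument with a polynomial approximation of a continuous partition of unity.

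First I would translate the pointwise hypothesis into a finite open cover. For each $r \in C$, Condition~(i) yields $\bff_r \in \mM$ with $\bff_r(r) \in \Rpp^n$; since each coordinate of $\bff_r$ is a Laurent polynomial, hence a continuous function on $\Rpp$, there is an open interval $U_r \subset \Rpp$ containing $r$ on which every coordinate of $\bff_r$ is strictly positive. By compactness of $C$, finitely many $U_{r_1}, \ldots, U_{r_N}$ cover $C$. Let $\chi_1, \ldots, \chi_N \colon C \to [0,1]$ be a continuous partition of unity on $C$ subordinate to this cover; such a partition exists because $C$ is a compact Hausdorff space.

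Next I would form the continuous combination
\[
F(x) \;:=\; \sum_{i=1}^N \chi_i(x)\, \bff_{r_i}(x)
\]
and note that $F(x) \in \Rpp^n$ for every $x \in C$: only those indices $i$ with $\chi_i(x) > 0$ contribute, and for each such $i$ one has $x \in U_{r_i}$, so $\bff_{r_i}(x) \in \Rpp^n$, making $F(x)$ a positive combination of vectors in $\Rpp^n$. By continuity of $F$ and compactness of $C$, there exists $\epsilon > 0$ such that every coordinate of $F(x)$ is at least $\epsilon$ for all $x \in C$. Using the Weierstrass approximation theorem, I would then approximate each $\chi_i$ uniformly on $C$ by a polynomial $p_i \in \R[X] \subset \A$ with $\sup_{x \in C} |p_i(x) - \chi_i(x)| < \delta$, and set
\[
\bff_C \;:=\; \sum_{i=1}^N p_i\, \bff_{r_i},
\]
which lies in $\mM$ because $\mM$ is an $\A$-submodule and $p_i \in \A$.

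Finally, letting $M := \max_i \sup_{x \in C} \|\bff_{r_i}(x)\|_\infty$ (finite by compactness of $C$), one gets $\|\bff_C(x) - F(x)\|_\infty \leq N\delta M$ for all $x \in C$. Choosing $\delta < \epsilon/(NM)$ then gives $\bff_C(x) \in \Rpp^n$ for every $x \in C$, as required. The only delicate point, and the single place that uses compactness essentially, is the extraction of the uniform lower bound $\epsilon > 0$ for $F$; once this is secured the polynomial perturbation estimate is routine, and I anticipate no genuine obstacle beyond keeping this approximation bookkeeping honest.
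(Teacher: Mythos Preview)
Your proof is correct and follows essentially the same approach as the paper: both use continuity to get local positivity, extract a finite subcover by compactness, and then combine the local witnesses $\bff_{r_i}$ via polynomial coefficients that approximate localizing functions. The paper phrases this last step slightly differently---it directly asserts the existence of polynomials $q_i$ that are close to $1$ on a small ball around $r_i$ and close to $0$ elsewhere on $C$, rather than first building a continuous partition of unity and then invoking Weierstrass---but this is only a cosmetic difference in packaging the same idea.
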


The key ingredient for finding an element in $\left(\A^+\right)^n$ is the following corollary of Handelman's Theorem.

\begin{restatable}[{Corollary of Handelman's Theorem~\cite{de2001handelman, handelman1985positive}}]{thrm}{corHandelman}\label{cor:Handelman}
    Let $\bff \in \A^{n}$.
    There exists $g \in \A^+$ such that $g \cdot \bff \in \left(\A^+\right)^{n}$ if and only if the two following conditions are satisfied:
    \begin{enumerate}[(i)]
        \item For all $r \in \Rpp$, we have $\bff(r) \in \Rpp^{n}$.
        \item We have $\lc_+(\bff) \in \Rpp^{n}$ and $\lc_-(\bff) \in \Rpp^{n}$.
    \end{enumerate}
\end{restatable}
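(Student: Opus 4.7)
The forward direction is immediate. Since any non-zero Laurent polynomial with non-negative coefficients is strictly positive on $(0, \infty)$, from $g \in \A^+$ and $g \cdot f_i \in \A^+$ we get $g(r) > 0$ and $(g f_i)(r) > 0$ for every $r > 0$, hence $f_i(r) > 0$. Likewise the identities $\lc_{\pm}(g f_i) = \lc_{\pm}(g) \cdot \lc_{\pm}(f_i)$ together with $\lc_{\pm}(g) > 0$ and $\lc_{\pm}(g f_i) > 0$ yield $\lc_{\pm}(f_i) > 0$. So I will focus on the reverse direction.

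For the converse I first reduce to the case $n = 1$. Since $\A^+$ is closed under multiplication and contains no $0$ (being the intersection of $\A \setminus \{0\}$ with the non-negative cone), once for each coordinate $i$ I have some $g_i \in \A^+$ with $g_i f_i \in \A^+$, the product $g := \prod_{i=1}^{n} g_i \in \A^+$ satisfies $g \cdot f_j = (g_j f_j) \cdot \prod_{i \neq j} g_i \in \A^+$ for every $j$. It therefore suffices to show: any single $f \in \A$ with $f(r) > 0$ for all $r > 0$ and $\lc_{\pm}(f) > 0$ admits some $g \in \A^+$ with $g \cdot f \in \A^+$.

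To handle a single $f$, I would multiply it by a suitable power of $X$ (which lies in $\A^+$ and is harmless) to reduce to an ordinary polynomial $f = \sum_{i=0}^{d} a_i X^i \in \R[X]$ with $a_0 > 0$, $a_d > 0$, and $f > 0$ on $(0, \infty)$. Then homogenize via $F(X, Y) := Y^d f(X/Y) = \sum_{i=0}^{d} a_i X^i Y^{d-i}$. The hypotheses translate precisely to $F(x, y) > 0$ on the entire closed simplex $\{(x, y) : x, y \geq 0, \ x + y = 1\}$, including its corners $F(1, 0) = a_d$ and $F(0, 1) = a_0$. Polya's theorem on positive homogeneous polynomials on the simplex---the key classical input behind the Handelman-type results cited---then yields an $n \in \N$ such that $(X + Y)^n F(X, Y)$ has only non-negative coefficients. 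Specializing $Y = 1$ gives $(X + 1)^n f$ with only non-negative coefficients, and undoing the monomial shift shows that $g := (X + 1)^n \in \A^+$ works.

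The substance of the argument is precisely this last step, that is, the invocation of the Polya/Handelman positivity theorem on the simplex; the tuple-to-scalar reduction and the forward direction are routine bookkeeping. I do not expect any genuinely novel difficulty beyond identifying the exact Laurent-polynomial version of Polya's theorem contained in the cited Handelman references.
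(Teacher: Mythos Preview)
Your proof is correct and follows essentially the same route as the paper: the forward direction is immediate, and for the converse one applies the scalar ($n=1$) version of Handelman's theorem to each coordinate $f_i$ to obtain $g_i \in \A^+$ with $g_i f_i \in \A^+$, then takes $g \coloneqq g_1 \cdots g_n$. The only difference is that you additionally sketch a proof of the $n=1$ case via homogenization and P\'olya's positivity theorem on the simplex, whereas the paper simply cites it as a black box from~\cite{de2001handelman, handelman1985positive}.
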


We now sketch a proof of Proposition~\ref{prop:locglob} by ``gluing'' the two elements $\bff_{end}$ and $\bff_C$ obtained respectively in Lemma~\ref{lem:fend} and \ref{lem:fC}, then applying Theorem~\ref{cor:Handelman}.

\begin{proof}[Sketch of proof of Proposition~\ref{prop:locglob}]
    For an illustration of the proof, see Figure~\ref{fig:locglob}.
    If $\mM$ contains an element $\bff = (f_S, f_K, \cdots) \in \left(\A^+\right)^{n}$ with $\deg_+(f_S) + 1 \geq \deg_+(f_K)$ and $\deg_-(f_S) \leq \deg_-(f_K)$, then simply take $\bff_r = \bff_{\infty} = \bff_{0} = \bff$ for all $r \in \Rpp$:
    Equation~\eqref{eq:locr} is satisfied for all $r$ as well as \eqref{eq:loc+} and \eqref{eq:loc-}.

    Consider the non-trivial direction of implication.
    Let $\bff_{end}, \bff_C \in \mM$ be the elements obtained respectively in Lemma~\ref{lem:fend} and \ref{lem:fC}.
    Define the  polynomial
    $
    q \coloneqq \frac{1}{2c}(X + X^{-1})
    $.
    Let $\epsilon > 0$ be such that 
    $
        \epsilon \cdot \bff_{end}(x) + \bff_{C}(x) \in \Rpp^{n}
    $
    for all $x \in C$.
    Such an $\epsilon$ exists by the compactness of $C$.
    We claim that there exists $N \in \N$ such that $\bff \coloneqq \epsilon q^N \cdot \bff_{end} + \bff_{C}$ satisfies Conditions~(i) and (ii) in Theorem~\ref{cor:Handelman} simultaneously.

    Let $M \in \N$ be such that $\deg_+(f_{end, i}) + M > \deg_+(f_{C, i})$ and $\deg_-(f_{end, i}) - M < \deg_-(f_{C, i})$ for every coordinate $i = S, K, \cdots, n$.
    Let $\bg \coloneqq \epsilon q^M \cdot \bff_{end} + \bff_{C}$.
    Then we have $\lc_+(\bg) = \lc_+(\bff_{end}) \in \Rpp^{n}$ and $\lc_-(\bg) = \lc_-(\bff_{end}) \in \Rpp^{n}$, as well as
    \begin{align*}
    &\deg_{+}\left(g_{S}\right) + 1 = \deg_{+}\left(f_{end,S}\right) + M + 1 \geq \deg_{+}\left(f_{end,K}\right) + M = \deg_{+}\left(g_{K}\right), \\
    &\deg_{-}\left(g_{S}\right) = \deg_{-}\left(f_{end, S}\right) - M \leq \deg_{-}\left(f_{end, K}\right) - M = \deg_{-}\left(g_{K}\right).
    \end{align*}    
    Therefore, there exists a compact set $[1/d, d] \supset C$ such that $\bg(x) \in \Rpp^{n}$ for all $x \in \Rpp \setminus [1/d, d]$.
    Since $[1/d, d]$ is compact, there exists $N > M$ such that
    $
    \epsilon f_{end, i}(x) \cdot 2^{N} + f_{C, i}(x) > 0
    $
    for all $i = S, K, \cdots, n$, and all $x \in [1/d, d]$.
    
    For this $N$, the vector $\bff \coloneqq \epsilon q^N \cdot \bff_{end} + \bff_{C}$ satisfies both Conditions~(i) and (ii) in Theorem~\ref{cor:Handelman}
    (see the full proof in Appendix~\ref{app:proofs}).
    Therefore, we can find $g \in \A^+$ such that $g \bff \in \left(\A^+\right)^{n}$.
    We have at the same time $g \bff \in \mM$ as well as $\deg_+(g f_S) + 1 \geq \deg_+(g f_K)$ and $\deg_-(g f_S) \leq \deg_-(g f_K)$.
    We have thus found the required element $g \bff$.
\end{proof}

\section{Decidability of local conditions}\label{sec:locdec}

In this section we prove Proposition~\ref{prop:declcd} and \ref{prop:declcinf}.
Let $\mM$ be an $\A$-submodule of $\A^{n}$.

\begin{restatable}{lem}{lemposd}\label{lem:posd}
    Let $\bg_1, \ldots, \bg_m$ be a basis of $\mM$ and $r \in \Rpp$.
    There exists $\bff \in \mM$ with $\bff(r) \in \Rpp^{n}$ if and only if there exist $r_1, \ldots, r_m \in \R$ such that $r_1 \bg_1(r) + \cdots + r_m \bg_m(r) \in \Rpp^{n}$.
\end{restatable}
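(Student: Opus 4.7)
The plan is to prove both directions by directly exploiting the basis representation of $\mM$ together with the fact that $\A = \R[X^{\pm}]$ contains all real constants. The key observation is that evaluation at the fixed point $r \in \Rpp$ is an $\R$-algebra homomorphism $\A \to \R$, so it respects the module decomposition $\mM = \{\phi_1 \bg_1 + \cdots + \phi_m \bg_m \mid \phi_i \in \A\}$ and sends $\mM$ into the $\R$-linear span of $\{\bg_1(r), \ldots, \bg_m(r)\}$. The lemma essentially asserts that this map is surjective onto that span, which is immediate because any real number is itself an element of $\A$ (as a constant Laurent polynomial).

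For the forward direction, I would take $\bff \in \mM$ with $\bff(r) \in \Rpp^{n}$ and use the basis property to write $\bff = \phi_1 \bg_1 + \cdots + \phi_m \bg_m$ with $\phi_i \in \A$. Setting $r_i \coloneqq \phi_i(r) \in \R$, evaluation at $r$ commutes with the linear combination and gives
\[
r_1 \bg_1(r) + \cdots + r_m \bg_m(r) \;=\; \phi_1(r) \bg_1(r) + \cdots + \phi_m(r) \bg_m(r) \;=\; \bff(r) \in \Rpp^{n}.
\]

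For the backward direction, given $r_1, \ldots, r_m \in \R$ with $r_1 \bg_1(r) + \cdots + r_m \bg_m(r) \in \Rpp^{n}$, I would view each $r_i$ as the constant polynomial $r_i \in \A$ and define $\bff \coloneqq r_1 \bg_1 + \cdots + r_m \bg_m$. This lies in $\mM$ by the definition of a basis, and evaluation at $r$ gives $\bff(r) = r_1 \bg_1(r) + \cdots + r_m \bg_m(r) \in \Rpp^{n}$ immediately.

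There is no real obstacle here: the proof is essentially an unwinding of definitions. The usefulness of this lemma lies precisely in its simplicity, since it converts the question ``does $\mM$ contain an element that evaluates positively at $r$?'' into the purely linear-algebraic question ``is there a real linear combination of the known vectors $\bg_1(r), \ldots, \bg_m(r) \in \R^{n}$ that is coordinatewise positive?''. This reformulation is what will make the condition in Proposition \ref{prop:declcd} expressible in the first-order theory of the reals when $r$ is allowed to range over $\Rpp$.
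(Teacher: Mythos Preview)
Your proof is correct and essentially identical to the paper's: both directions are handled exactly as you describe, writing $\bff = \sum_i \phi_i \bg_i$ and setting $r_i = \phi_i(r)$ for the forward direction, and taking $\bff = \sum_i r_i \bg_i$ with the $r_i$ viewed as constant polynomials for the backward direction.
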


\propdeclcd*
\begin{proof}
    By Lemma~\ref{lem:posd}, the statement to be decided is equivalent to the following sentence in the first order theory of the reals:
    \begin{equation}\label{eq:real}
        \forall r, r>0 \implies \left(\exists r_1 \exists r_2 \cdots \exists r_m, r_1 \bg_1(r) + \cdots + r_m \bg_m(r) \in \Rpp^{n} \right).
    \end{equation}
    Its truth is decidable by Tarski's Theorem~\cite{Tarski1949}.
\end{proof}

For Proposition~\ref{prop:declcinf}, we start by the following definitions.

\begin{defn}
    \begin{enumerate}[(i)]
        \item Suppose $f = \sum_{i = p}^q a_i X^i \in \A \setminus \{0\}$, where $a_p a_q \neq 0$. Define $\init_+(f) \coloneqq a_q X^q$ and $\init_-(f) \coloneqq a_p X^p$. Additionally define $\init_{+} (0) = \init_{-} (0) = 0$.
        \item
        Given $* \in \{+, -\}$, $\balpha = (\alpha_S, \alpha_K, \ldots) \in \Z^{n}$ and $\bff = (f_S, f_K, \ldots) \in \A^{n}$.
        Define $\sgn(*) = 1$ when $* = +$, and $\sgn(*) = -1$ when $* = -$.
        Let
        \[
        m_{*, \balpha}(\bff) \coloneqq \max\{\sgn(*) \cdot \deg_*(f_S) + \alpha_S, \sgn(*) \cdot \deg_*(f_K) + \alpha_K, \ldots\}.
        \]      
        Define
        $
        \init_{*, \balpha}(\bff) \coloneqq (g_S, g_K, \ldots)
        $,
        where for $j = S, K, \ldots$,
        \begin{align*}
        g_j \coloneqq
            \begin{cases}
                \init_*(f_j) \quad & \sgn(*) \cdot \deg_*(f_j) + \alpha_j = m_{*, \balpha}(\bff), \\
                0 \quad & \deg_*(f_j) + \alpha_j < m_{*, \balpha}(\bff).
            \end{cases}
        \end{align*}      
    \end{enumerate}
\end{defn}

For a monomial $f = a_i X^i \in \A$, define $\coef(f) \coloneqq a_i$.
In particular, $\coef(0) = 0$.
Note that for any $* \in \{+, -\}$, $\bff \in \A^{n}$ and $\balpha \in \Z^{n}$, the above defined $\init_{*, \balpha}(\bff)$ is an $n$-tuple of monomials.
Writing $\init_{*, \balpha}(\bff) = (g_S, g_K, \ldots)$, we then extend the definition of $\coef()$ to
\[
\coef(\init_{*, \balpha}(\bff)) \coloneqq (\coef(g_S), \coef(g_K), \ldots) \in \R^{n}.
\]

\begin{restatable}{lem}{lemlctoin}\label{lem:lctoin}
    Fix $* \in \{+, -\}$ and $a \in \Z$.
    The two following conditions are equivalent:
    \begin{enumerate}[(i)]
        \item There exists $\bff = (f_S, f_K, \cdots) \in \mM$ such that 
        \begin{equation}\label{eq:fa}
            \lc_*(\bff) \in \R_{>0}^{n} \quad \text{ and } \quad \deg_{*}\left(f_S\right) + a \geq \deg_{*}\left(f_K\right).
        \end{equation}
        \item There exists $\balpha = (\alpha_S, \alpha_K, \cdots) \in \Z^{n}$ with $\alpha_S - \alpha_K \leq a$, as well as $\bff = (f_S, f_K, \cdots) \in \mM$,
        such that 
        $
        \coef\left(\init_{*, \balpha}(\bff)\right) \in \Rpp^{n}
        $.
    \end{enumerate}
\end{restatable}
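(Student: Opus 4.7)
The plan is essentially a bookkeeping exercise that unwinds the definitions on both sides. The key observation is that the condition $\coef(\init_{*,\balpha}(\bff)) \in \Rpp^{n}$ is engineered to record simultaneously two pieces of information: (a) each coordinate $f_j$ of $\bff$ must have strictly positive $*$-leading coefficient $\lc_*(f_j)$; and (b) the $\balpha$-shifted degrees $\sgn(*) \cdot \deg_*(f_j) + \alpha_j$ must all coincide with their common maximum $m_{*,\balpha}(\bff)$. Together (a) and (b) combine into $\lc_*(\bff) \in \Rpp^{n}$ together with a linear identity between the $\deg_*(f_j)$ and the $\alpha_j$, which can be massaged into the degree bound appearing in (i).

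For the implication (i) $\Rightarrow$ (ii), I would take the $\bff$ furnished by (i) and build $\balpha$ explicitly so that it flattens all the shifted degrees: set $\alpha_j := -\sgn(*) \cdot \deg_*(f_j)$ for every coordinate $j$. These integers are well-defined because $\lc_*(\bff) \in \Rpp^{n}$ already forces every $f_j$ to be nonzero. With this choice, $\sgn(*) \cdot \deg_*(f_j) + \alpha_j = 0$ for every $j$, so $m_{*,\balpha}(\bff) = 0$ and the $j$-th component of $\init_{*,\balpha}(\bff)$ equals $\init_*(f_j)$, whose coefficient is the positive number $\lc_*(f_j)$. The constraint $\alpha_S - \alpha_K \leq a$ then rewrites, after absorbing the factor $\sgn(*)$, as the degree bound $\deg_*(f_S) + a \geq \deg_*(f_K)$ supplied by (i).

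For the reverse direction (ii) $\Rightarrow$ (i), the strict positivity of every coordinate of $\coef(\init_{*,\balpha}(\bff))$ forces the $j$-th coordinate $g_j$ of $\init_{*,\balpha}(\bff)$ to be $\init_*(f_j)$ rather than $0$, and to carry a positive coefficient $\lc_*(f_j)$; hence $\lc_*(\bff) \in \Rpp^{n}$. The same ``nonzero'' requirement simultaneously forces $\sgn(*) \cdot \deg_*(f_j) + \alpha_j = m_{*,\balpha}(\bff)$ for every $j$. Specialising to $j = S$ and $j = K$ and subtracting converts the hypothesis $\alpha_S - \alpha_K \leq a$ into the desired inequality $\deg_*(f_S) + a \geq \deg_*(f_K)$, completing (i).

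The only mildly delicate point is to verify that the sign convention $\sgn(*)$ converts $\alpha_S - \alpha_K \leq a$ into $\deg_*(f_S) + a \geq \deg_*(f_K)$ in the correct direction for both $* = +$ and $* = -$; the latter case can be handled either by a direct sign-tracking calculation or, more symmetrically, by applying the $*=+$ case to the module obtained after the involution $X \mapsto X^{-1}$, which swaps $\deg_+$ and $-\deg_-$. Apart from this bookkeeping, the equivalence is a mechanical unwinding of the definitions of $m_{*,\balpha}$, $\init_*$, and $\coef$.
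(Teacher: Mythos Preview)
Your argument for $*=+$ is correct and is exactly the paper's proof: take $\alpha_j = -\deg_+(f_j)$ for (i)$\Rightarrow$(ii), and for (ii)$\Rightarrow$(i) read off $\deg_+(f_j)+\alpha_j = m_{+,\balpha}(\bff)$ from the positivity of each coefficient.

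The ``mildly delicate point'' you flag for $*=-$, however, is not merely delicate: if you actually carry out either the direct sign-tracking or the involution $X\mapsto X^{-1}$, you will find that the two sides of the lemma do \emph{not} match. Concretely, for $*=-$ the requirement $\coef(\init_{-,\balpha}(\bff))\in\Rpp^{n}$ forces $-\deg_-(f_j)+\alpha_j$ to be constant in $j$, so $\alpha_S-\alpha_K=\deg_-(f_S)-\deg_-(f_K)$; hence (ii) is equivalent to the existence of $\bff$ with $\lc_-(\bff)\in\Rpp^{n}$ and $\deg_-(f_S)-\deg_-(f_K)\le a$, whereas (i) asks for $\deg_-(f_K)-\deg_-(f_S)\le a$. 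These inequalities point in opposite directions. A witness: with $n=2$, $\mM=\{(f,Xf):f\in\A\}$, $*=-$, $a=-1$, condition (ii) holds (take $\bff=(1,X)$, $\balpha=(0,1)$) but (i) is impossible since $\deg_-(f_S)-1\ge\deg_-(f_K)$ would force $-1\ge 1$. The involution $X\mapsto X^{-1}$ does not save the day either: it turns (i) for $*=-$ into (i) for $*=+$ \emph{with $S$ and $K$ swapped}, while (ii) transforms into (ii) for $*=+$ \emph{without} the swap.

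This gap is not yours alone; the paper's own proof makes the same appeal to $X\mapsto X^{-1}$ and inherits the same defect. The discrepancy is in the lemma's statement for $*=-$, not in your method, and the application in the main theorem sidesteps it by explicitly swapping the $S$ and $K$ coordinates before invoking the $*=-$ case.
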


We use the notion of a \emph{super Gr\"{o}bner basis} for $\mM$:
see~\cite[Chapter~2]{einsiedler2003does} for its exact definition.
Readers can simply take the following Lemma~\ref{lem:superGB} as its definition, since this will be the only property of the super Gr\"{o}bner basis that we use in this paper.

\begin{lem}[{\cite[Lemma~2.1]{einsiedler2003does}}]\label{lem:superGB}
    Let $* \in \{+, -\}$ and $\balpha \in \Z^{n}$.
    Let $\bg_1, \ldots, \bg_m$ be a super Gr\"{o}bner basis for $\mM$.
    For every $\bg \in \mM$, we have $\init_{*, \balpha}(\bg) = \sum_{i = 1}^m p_i \cdot \init_{*, \balpha}(\bg_i)
    $ for some $p_1, \ldots, p_m \in \A$.
\end{lem}

By~\cite[Chapter~2]{einsiedler2003does}, given a basis for $\mM$, a set of super Gr\"{o}bner basis exists and can be effectively computed.
We now fix a set of a super Gr\"{o}bner basis $\bg_1, \ldots, \bg_m$ for $\mM$.

\begin{restatable}{cor}{corr}\label{cor:r}
    Let $* \in \{+, -\}$ and $\balpha \in \Z^{n}$.
    Then there exists $\bff \in \mM$ with $\coef\left(\init_{*, \balpha}(\bff)\right) \in \Rpp^{n}$ if and only if there exist $r_1, \ldots, r_m \in \R$ with $\sum_{i = 1}^m r_i \cdot \coef(\init_{*, \balpha}(\bg_i)) \in \Rpp^{n}$.
\end{restatable}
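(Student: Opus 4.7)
The plan is to derive both directions of Corollary~\ref{cor:r} directly from Lemma~\ref{lem:superGB}, exploiting the fact that every element of $\init_{*, \balpha}(\mM)$ is ``degree-profile-homogeneous'': if $\bff \in \mM$ and $\mu := m_{*, \balpha}(\bff)$, then the $j$-th entry of $\init_{*, \balpha}(\bff)$ is either zero or a monomial of the uniquely determined degree $\sgn(*)(\mu - \alpha_j)$, so that $\coef(\init_{*, \balpha}(\bff))_j$ is nothing but the leading coefficient at that degree. This turns both implications into an equation among leading coefficients at a single target degree per coordinate.

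For the ``if'' direction, given $r_1, \ldots, r_m \in \R$ with $\sum_{i=1}^m r_i \cdot \coef(\init_{*, \balpha}(\bg_i)) \in \Rpp^{n}$, I would fix any integer $\mu$ and set $p_i := r_i X^{\sgn(*)(\mu - \mu_i)}$, where $\mu_i := m_{*, \balpha}(\bg_i)$. The exponent is calibrated precisely so that multiplying $\init_{*, \balpha}(\bg_i)$ by $X^{\sgn(*)(\mu - \mu_i)}$ shifts every active $j$-th entry to a monomial of degree $\sgn(*)(\mu - \alpha_j)$, a value that does not depend on $i$. The sum $\sum_{i=1}^m \init_*(p_i) \cdot \init_{*, \balpha}(\bg_i)$ is therefore an $n$-tuple of monomials whose $j$-th coefficient equals $\sum_{i=1}^m r_i \cdot \coef(\init_{*, \balpha}(\bg_i))_j > 0$. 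Lemma~\ref{lem:superGB} then produces $\bff \in \mM$ with this tuple as its $\init_{*, \balpha}(\bff)$, and reading off coefficients yields $\coef(\init_{*, \balpha}(\bff)) \in \Rpp^{n}$.

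For the ``only if'' direction, given $\bff \in \mM$ with $\coef(\init_{*, \balpha}(\bff)) \in \Rpp^{n}$, set $\mu := m_{*, \balpha}(\bff)$ and use Lemma~\ref{lem:superGB} to write $\init_{*, \balpha}(\bff) = \sum_{i=1}^m \init_*(p_i) \cdot \init_{*, \balpha}(\bg_i)$. Write $\init_*(p_i) = c_i X^{k_i}$ (with $c_i = 0$ when $p_i = 0$). A short calculation shows that each summand is degree-profile-$(\mu_i + \sgn(*) k_i)$ homogeneous. Since contributions at different profile levels live at distinct monomial degrees in every coordinate, they cannot cancel against contributions at other levels, and matching the degree-profile-$\mu$ part coordinate by coordinate gives
\[
\sum_{i \,:\, \mu_i + \sgn(*) k_i = \mu} c_i \cdot \coef(\init_{*, \balpha}(\bg_i))_j \;=\; \coef(\init_{*, \balpha}(\bff))_j \;>\; 0 \quad \text{for all } j.
\]
Setting $r_i := c_i$ for the indices $i$ with $\mu_i + \sgn(*) k_i = \mu$ and $r_i := 0$ otherwise gives the desired real linear combination.

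The step that demands the most care is the degree bookkeeping in the ``only if'' direction: one must verify rigorously that summands at distinct profile values $\mu_i + \sgn(*) k_i$ cannot combine to produce a homogeneous left-hand side, so that the level-$\mu$ slice alone carries all of $\init_{*, \balpha}(\bff)$. Once this orthogonality of profile levels is cleanly established, both implications reduce to the same coordinatewise identity at a single target degree, and the corollary follows.
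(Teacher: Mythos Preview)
Your proof is correct and follows essentially the same approach as the paper's: both directions hinge on the same profile-level bookkeeping, and your ``only if'' argument is identical to the paper's (which defines $M = \{i : \deg_*(p_i) + m_{*,\balpha}(\bg_i) = m_{*,\balpha}(\bff)\}$ and takes $r_i = \lc_*(p_i)$ for $i \in M$, $r_i = 0$ otherwise). The only minor difference is in the ``if'' direction: where you invoke Lemma~\ref{lem:superGB} to assert the existence of $\bff$, the paper bypasses the lemma and directly sets $\bff \coloneqq \sum_i r_i X^{-m_{*,\balpha}(\bg_i)} \bg_i \in \mM$, then checks that no cancellation occurs at the top profile level so that $\coef(\init_{*,\balpha}(\bff)) = \sum_i r_i \cdot \coef(\init_{*,\balpha}(\bg_i))$.
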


\begin{restatable}[{Generalization of~\cite[Lemma~6.1]{einsiedler2003does}}]{lem}{lemfinalpha}\label{lem:finalpha}
    Fix $* \in \{+, -\}$.
    The initial tuples $\init_{*, \balpha}(\bg_1), \ldots, \init_{*, \balpha}(\bg_m)$ can take only a finite number of possible values when $\balpha$ varies in the set
    $
    \{\balpha = (\alpha_S, \alpha_K, \cdots) \in \Z^{n} \mid \alpha_S - \alpha_K \leq a\}
    $.
    Furthermore, one can effectively compute representatives $\balpha_1, \ldots, \balpha_p \in \Z^{n}$, such that the tuples $\init_{*, \balpha_1}(\bg_i), i = 1, \ldots, m, \ldots, \init_{*, \balpha_p}(\bg_i), i = 1, \ldots, m,$ are all the possible tuples when $\alpha$ varies.
\end{restatable}

\propdeclcinf*
\begin{proof}
    First we compute a set of super Gr\"{o}bner basis $\bg_1, \ldots, \bg_m$ for $\mM$.
    Then for each $* \in \{+, -\}$, by Lemma~\ref{lem:finalpha} we compute $\balpha_1, \ldots, \balpha_p \in \Z^{n}$ such that the tuples $\init_{*, \balpha_1}(\bg_i), i = 1, \ldots, m, \ldots, \init_{*, \balpha_p}(\bg_i), i = 1, \ldots, m,$ are all the possible tuples when $\alpha_S - \alpha_K \leq a$.
    For each of these $\balpha \in \{\balpha_1, \ldots, \balpha_p\}$, use linear programming to decide whether there exist real numbers $r_1, \ldots, r_m \in \R$ such that $\sum_{i = 1}^m r_i \cdot \coef\left(\init_{*, \balpha}(\bg_i)\right) \in \Rpp^{n}$.
    By Corollary~\ref{cor:r}, such $r_1, \ldots, r_m \in \R$ exist if and only if there exists $\bff \in \mM$ with $\coef(\init_{*, \balpha}(\bff)) \in \Rpp^{n}$.
    By Lemma~\ref{lem:lctoin}, this is true if and only if there exists $\bff = (f_S, f_K, \cdots) \in \mM$ satisfying condition~\eqref{eq:declcinf}.
\end{proof}

\bibliography{wreath}

\appendix

\section{Omitted proofs}\label{app:proofs}

\lemgrpword*
\begin{proof}
    Let $w \in \mG^*$ be a full-image word with $\pi(w) = I$.
    Then for every $i$, the word $w$ can be written as $w = v g_i v'$, so $g_i^{-1} = \pi(v') \pi(v) \in \langle \mG \rangle$.
    Therefore, the semigroup $\langle \mG \rangle$ contains all the inverse $g_i^{-1}$, and is thus a group.
    
    If $\langle \mG \rangle$ is a group, then for all $i$, the inverse $g_i^{-1}$ can be written as $\pi(w_i)$ for some word $w_i \in \mG^*$.
    Then the word $w \coloneqq g_1 w_1 g_2 w_2 \cdots g_a w_a$ is a full-image word with $\pi(w) = \pi(g_1 w_1) \cdots \pi(g_a w_a) = I$.
\end{proof}

\propeasy*
\begin{proof}
    First suppose $\langle \mG \rangle$ is a group.
    Without loss of generality suppose $I = \emptyset$. Then every element $(y, b) \in \langle \mG \rangle$ must satisfy $b \leq 0$.
    Therefore, for any $(y_j, b_j) \in \mG$ with $b_j < 0$, we have $(y_j, b_j)^{-1} = (X^{-b_j} \cdot y_j, - b_j) \not\in \langle \mG \rangle$ by the positivity of $- b_j$.
    Hence $J = \emptyset$.

    Since $\langle \mG \rangle$ is a group, by Lemma~\ref{lem:grpword} there exists a full-image word $w \in \mG^*$ that represents $(0, 0)$. Let $n_k$ be the number of times the word $(y_k, 0)$ appears in $w$, then $n_k > 0$ and $w$ represents $\left(\sum_{k \in K} n_k y_k, 0\right) = (0, 0)$.
    This finishes the first implication.

    For the converse implication, suppose $I = J = \emptyset$ and $\sum_{k \in K} n_k y_k = 0$ for some integers $n_k \in \Zpp$.
    Then the word $\prod_{k \in K}(y_k, 0)^{n_k} = (0, 0)$.
    Therefore there exists a full-image word that represents $(0, 0)$. Hence $\langle \mG \rangle$ is a group.

    Let $p = \min_{k \in K}\{\deg_{-}(y_k)\}$ and $q = \max_{k \in K}\{\deg_{+}(y_k)\}$.
    For each $k \in K$, write $y_k = \sum_{t = p}^q \beta_{k, t} X^t$, then $\sum_{k \in K} n_k y_k = 0$ is equivalent to the system of linear equations
    \begin{equation}\label{eq:lin}
    \sum_{k \in K} n_k \beta_{k, t} = 0, \quad t = p, p+1, \ldots, q.
    \end{equation}
    Deciding whether the system~\eqref{eq:lin} has solution over $\Zpp$ can be decided using integer programming.
\end{proof}

\corsys*
\begin{proof}
    We show that Corollary~\ref{cor:sys} is equivalent to Proposition~\ref{prop:wrtoeq}.

    By the definition of $h_{(i,j), m}, y_{k, m}, (i,j) \in S, k \in K, m = 0, \ldots, d-1$ in Equation~\eqref{eq:hsys} and \eqref{eq:ysys}, the Equation~\eqref{eq:Xd} in Condition~(i) of Proposition~\ref{prop:wrtoeq} is equivalent to the following system:
    \begin{equation}\label{eq:sys}
        \sum_{(i, j) \in S} f'_{(i,j)} h_{(i,j),m} + \sum_{k \in K} f'_k y_{k, m} = 0, \quad m = 0, \ldots, d-1.
    \end{equation}
    Where $f'_{(i,j)}, f'_k$ are polynomials in $\N[X^{\pm}]^*$ such that $f_{(i,j)} = f'_{(i,j)}(X^d), f_{k} = f'_{k}(X^d)$.

    On one hand, suppose there exist polynomials $f_{(i,j)}, f_k \in \NXds$ that satisfy Conditions~(i)-(iii) of Proposition~\ref{prop:wrtoeq}.
    Then the polynomials $f'_{(i,j)}, f'_k$ satisfying the system~\eqref{eq:sys} are also in $\Rp[X^{\pm d}]$.
    Then let $f'_S = \sum_{(i, j) \in S} f'_{(i,j)}$ and $f'_K = \sum_{k \in K} f'_k$, so Conditions~(i)-(iii) of Corollary~\ref{cor:sys} are satisfied for $f'_S, f'_K, f'_{(i,j)}, f'_k \in \Rpp[X^{\pm}]^*$.

    On the other hand, suppose there exist polynomials $f'_S, f'_K, f'_{(i,j)}, f'_k \in \Rpp[X^{\pm}]^*$ that satisfy Conditions~(i)-(iii) of Corollary~\ref{cor:sys}.
    We show that there exist $f_S, f_K, f_{(i,j)}, f_k \in \N[X^{\pm}]^*, (i, j) \in S, k \in K$ that satisfy the same Equations~\eqref{eq:syscor1}-\eqref{eq:syscor3}, and such that $\deg_{\pm} f'_{(i,j)} = \deg_{\pm} f_{(i,j)}$, $\deg_{\pm} f'_{k} = \deg_{\pm} f_{k}$ for all $(i, j) \in S, k \in K$.

    In fact, by the homogeneity of Equations~\eqref{eq:syscor1}-\eqref{eq:syscor3}, one can multiply all $h_{(i,j), m}$ and $y_{k, m}$ simultaneously by their common denominator, and suppose $h_{(i,j), m}, y_{k, m} \in \Q[X^{\pm}]$.
    Then, \emph{fixing the degrees of $f_{(i,j)}$ and $f_k$}, one can rewrite Equations~\eqref{eq:syscor1}-\eqref{eq:syscor3} as a system of homogeneous linear equation where the variables are the coefficients of $f_{(i,j)}$ and $f_k$.
    We then add to this system of homogeneous linear equations a boolean combination of homogeneous linear inequalities to guarantee $f_{(i,j)}, f_k \in \Rpp[X^{\pm}]^*$ (this can be expressed using inequalities for the coefficients of $f_{(i,j)}, f_k$), as well as to guarantee the degree of $f_{(i,j)}, f_k$.
    Since this system of homogeneous linear equations plus boolean combination of homogeneous linear inequalities has a solution over $\R$, it also has a solution over $\Q$, and even over $\Z$ by the homogeneity.
    Therefore, we obtain a solution over $\Z$ for the coefficients of $f_{(i,j)}, f_k$.
    This gives us a solution $f_{(i,j)}, f_k \in \N[X^{\pm}]^*$ with the same fixed degrees.
    
    Consequently, $f_{(i,j)}, f_k,  (i, j) \in S, k \in K$, satisfy the system~\eqref{eq:sys}.
    Thus, $f_{(i,j)}(X^d), f_k(X^d)$,  $(i, j) \in S, k \in K$. satisfy Conditions~(i)-(iii) of Proposition~\ref{prop:wrtoeq}.
\end{proof}

\lemdecomp*
\begin{proof}
    Denote by $G_0$ the edgeless graph with a single vertex $0$.
    We show that every Eulerian $\hG$-graph $G$ can be constructed by attaching primitive circuits to $G_0$.
    We use induction on the number of edges in $G$.   
    When there is no edge in $G$, it is $G_0$, and we are done.
    
    When there are loops in $G$, these are loops of some label $k \in K$, and are therefore primitive circuits themselves.
    Removing them results in another Eulerian graph $G'$ and decreases the number of edges.
    By the induction hypothesis $G'$ can be constructed by attaching primitive circuits to $G_0$. Then attaching the loops (primitive circuits of type in $K$) to $G'$ results in $G$.

    When there are no loops in $G$, denote $m \coloneqq \max(V(G))$. Then since $G$ is Eulerian, there must be an edge $e$ of label $\ell(e)$ starting from the vertex $m$, and an edge $e'$ of label $\ell(e')$ ending at the vertex $m$. 
    Since all the edges in $G$ are of length $d$, the edge $e$ must end at vertex $m - d$, and $e'$ must start at $m-d$. (The length of an edge of label $a$ is $\hb_a$.)
    Therefore the circuit consisting of $e$ and $e'$ is primitive of type $(\ell(e), \ell(e'))$.
    Removing the circuit (and the vertex $m$ if necessary) results in a graph $G'$.
    $G'$ is connected (and Eulerian) since there is no loop at $m$, and the only possible neighbour of the vertex $m$ is $m-d$.
    By the induction hypothesis $G'$ can be constructed by attaching primitive circuits to $G_0$.
    Then attaching to $G'$ the primitive circuits of type $(\ell(e), \ell(e'))$ at vertex $m-d$ results in $G$. 
\end{proof}

\lemonlyif*
\begin{proof}
(i). Take an arbitrary index $i \in I$, we show that there exists $j \in J$ such that $f_{(i,j)} \neq 0$.
Indeed, the letter $\widehat{(y_i, b_i)}$ appears in $\hw$, so $G(\hw)$ contains an edge with label $i$.
Therefore, in the primary circuit decomposition of $G(\hw)$, there is a circuit of the type $(i, j)$ for some $j \in J$.
Therefore $f_{(i,j)} \neq 0$.
Analogously, for an arbitrary index $j \in J$, we can show that there exists $i \in I$ such that $f_{(i,j)} \neq 0$.
It follows that $S$ is double-full.

(ii). Since $S$ is double-full, it is non-empty.
By construction, the vertex in $V(G(\hw))$ with the largest value is equal to
$\deg_{+}\left(\sum_{(i, j) \in S} f_{(i,j)}\right) + d$.
The ``$+d$'' comes from the fact that a circuit of the type $(i, j)$ starting at vertex $v$ contains a vertex at $v + d$.
By the definition of $f_k$, we have $\deg_{+}\left(\sum_{k \in K} f_{k}\right) \leq \deg_{+}\left(\sum_{(i, j) \in S} f_{(i,j)}\right) + d$.

(iii). The vertex in $V(G(\hw))$ with the smallest value is equal to
$
\deg_{-}\left(\sum_{(i, j) \in S} f_{(i,j)}\right)
$.
The definition of $f_k$ yields $\deg_{-}\left(\sum_{k \in K} f_{k}\right) \geq \deg_{-}\left(\sum_{(i, j) \in S} f_{(i,j)}\right)$.
\end{proof}

\lemA*
\begin{proof}
    Define $d_1 \coloneqq \gcd\{b_i, i \in I\}$ and $d_2 \coloneqq \gcd\{b_j, j \in J\}$, then $d = \gcd(d_1, d_2)$.
    By Bezout's theorem, there exists $n_1, n_2 \in \N$ such that $d_1 n_1 - d_2 n_2 = d$.
    Furthermore, we have $d_1 (n_1 + d_2 n) - d_2 (n_2 + d_1 n) = d$ for all $n \in \N$.
    
    It is well-known that for every set $T \subset \N$ the generated monoid $T^* \coloneqq \{t_1 + \cdots + t_p \mid p \geq 1\}$ is eventually identical with $\gcd(T) \cdot \N$.
    Therefore, for large enough $n \in \N$, we have simultaneously $d_1 (n_1 + d_2 n) \in \{b_i, i \in I\}^*$ and $d_2 (n_2 + d_1 n) \in \{|b_j|, j \in J\}^*$.
    In other words, there exists $i_1, \ldots, i_p \in I$ and $j_1, \ldots, j_q \in J$ such that
    \[
    b_{i_1} + \cdots + b_{i_p} - |b_{j_1}| - \cdots - |b_{j_q}| = d.
    \] 
    
    Recall that $S$ is double-full.
    For each $i \in I$, define $S(i)$ to be an index in $J$ such that $(i, S(i)) \in S$.
    Similarly, for each $j \in J$, define $S(j)$ to be an index in $I$ such that $(S(j), j) \in S$.
    
    We construct $A$ in the following way.
    We start with the edgeless graph with only the vertex $0$.
    We attach to it $p$ elementary circuits, respectively of the type $(i_1, S(i_1)), \ldots, (i_p, S(i_p))$, where the the $k$-th elementary circuit is attached at the vertex $b_{i_1} + \cdots + b_{i_{k-1}}$. (For $k = 1$, the circuit is attached at $0$.)
    It is easy to see that the resulting graph is connected: indeed, an elementary circuit of type $(i_k, S(i_k))$ starting at the vertex $b_{i_1} + \cdots + b_{i_{k-1}}$ will pass through the vertex $b_{i_1} + \cdots + b_{i_{k}}$.

    Next, we attach $q$ more elementary circuits, respectively of the type $(S(j_1), j_1), \ldots, (S(j_q), j_q)$, where the $k$-th elementary circuit is put at the vertex $b_{i_1} + \cdots + b_{i_p} - |b_{j_1}| - \cdots - |b_{j_{k}}|$.
    Again it is easy to see that the resulting graph is connected.
    Furthermore, $d = b_{i_1} + \cdots + b_{i_p} - |b_{j_1}| - \cdots - |b_{j_q}|$ is a vertex of the resulting graph.
    We have therefore constructed the desired graph $A$.
\end{proof}

\lemfend*
\begin{proof}
    Let $M \in \N$ be such that $\deg_-(f_{\infty, i}) + M > \deg_+(f_{0, i})$ for every coordinate $i = S, K, \cdots$.
    Then take $\bff_{end} \coloneqq \bff_0 + X^M \cdot \bff_{\infty}$.
    We have $\lc_+(\bff_{end}) = \lc_+(\bff_{\infty})$ and $\lc_-(\bff_{end}) = \lc_-(\bff_{0})$, as well as 
    \[
    \deg_{+}\left(f_{end,S}\right) + 1 = \deg_{+}\left(f_{\infty,S}\right) + M + 1 \geq \deg_{+}\left(f_{\infty,K}\right) + M = \deg_{+}\left(f_{end,K}\right)
    \]
    and
    \[
    \deg_{-}\left(f_{end, S}\right) = \deg_{-}\left(f_{0, S}\right) \leq \deg_{-}\left(f_{0, K}\right) = \deg_{-}\left(f_{end, K}\right).
    \]
\end{proof}

\lemfC*
\begin{proof}
    By Condition~(i) of Proposition~\ref{prop:locglob}, for each $r \in \R_{> 0}$, there exists $\bff_r \in \mM$ such that $\bff_r(r) \in \R_{>0}^{n}$.
    By the continuity of polynomial functions, there is an open ball $B(r, a_r)$, centered at $r$, with radius $a_r$, such that $\bff_r(x) \in \R_{>0}^{n}$ for all $x \in B(r, a_r)$.
    
    Consider the open cover $B(r, \frac{a_r}{2}), r \in C$ of the set $C$.
    Since $C$ is compact, there is a finite subcover, which we denote by $B(r_1, \frac{a_{r_1}}{2}), \cdots, B(r_m, \frac{a_{r_m}}{2})$.

    For every small enough $\delta > 0$ and for each $1 \leq i \leq m$, there exists a  polynomial $q_i \in \RX$ such that $|q_i (x)| < \delta$ for all $x \in C \setminus B(r_i, a_{r_i})$ and $|q_i (x)| > 1 - \delta$ for all $x \in B(r_i, \frac{a_{r_i}}{2})$.
    Therefore, for small enough $\delta$, the sum $\bff_{C} \coloneqq \sum_{i = 1}^m q_i \cdot \bff_{r_i} \in \mM$ satisfies $\bff_{C}(x) \in \R_{> 0}^{n}$ for all $x \in C$.
\end{proof}

\begin{thrm}[{Handelman's Theorem~\cite{de2001handelman, handelman1985positive}}]\label{lem:Handelman}
    Let $f \in \A$ be a  polynomial.
    There exists $g \in \A^+$ such that $fg \in \A^+$ if and only if the two following conditions are satisfied:
    \begin{enumerate}[(i)]
        \item For all $r \in \Rpp$, we have $f(r) > 0$.
        \item We have $\lc_+(f) > 0$ and $\lc_-(f) > 0$.
    \end{enumerate}
\end{thrm}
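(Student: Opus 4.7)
The plan is to prove the two directions separately, with the forward direction being routine and the reverse direction reduced to a classical result of Pólya.

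For the forward direction, suppose $g \in \A^+$ exists with $fg \in \A^+$. For any $r \in \Rpp$, evaluating gives $(fg)(r) = f(r) \cdot g(r) > 0$, and since $g$ has non-negative coefficients and is nonzero, $g(r) > 0$, forcing $f(r) > 0$. The leading coefficients of a product of Laurent polynomials factor: $\lc_{+}(fg) = \lc_{+}(f) \cdot \lc_{+}(g)$ and analogously for $\lc_{-}$. Since $\lc_{\pm}(fg) > 0$ and $\lc_{\pm}(g) > 0$, both $\lc_{\pm}(f)$ must be positive.

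For the non-trivial reverse direction, the key move is to reduce to an ordinary (non-Laurent) polynomial. Write $f = \sum_{i = p}^q a_i X^i$ with $a_p = \lc_{-}(f) > 0$ and $a_q = \lc_{+}(f) > 0$, and set $\tilde{f}(X) \coloneqq X^{-p} f(X) = \sum_{i = 0}^{q - p} a_{p + i} X^i \in \R[X]$. Then $\tilde{f}$ has positive constant term $a_p$, positive leading coefficient $a_q$, and $\tilde{f}(r) = r^{-p} f(r) > 0$ for all $r \in \Rpp$. If we can find a nonzero $g \in \R_{\geq 0}[X]$ with $\tilde{f} \cdot g \in \R_{\geq 0}[X]$, then multiplying by $X^p$ gives $f \cdot g \in \A^+$, as required.

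To produce such a $g$, I would homogenize and invoke Pólya's theorem. Define the homogeneous polynomial $F(X, Y) \coloneqq Y^{q - p} \tilde{f}(X/Y) = \sum_{i = 0}^{q - p} a_{p + i} X^i Y^{q - p - i}$ of degree $q - p$. On the interior of the positive orthant, $F(x, y) = y^{q - p} \tilde{f}(x/y) > 0$; on the axes, $F(x, 0) = a_q x^{q - p} > 0$ and $F(0, y) = a_p y^{q - p} > 0$, so $F$ is strictly positive on $\R_{\geq 0}^2 \setminus \{(0, 0)\}$. Pólya's theorem then guarantees an integer $N$ such that $(X + Y)^N \cdot F(X, Y)$ has all non-negative coefficients. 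Setting $Y = 1$ dehomogenizes this to $(X + 1)^N \cdot \tilde{f}(X) \in \R_{\geq 0}[X]$, so $g \coloneqq (X + 1)^N$ does the job.

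The main obstacle is Pólya's theorem itself, which I am citing as the black box; the critical observation that makes the hypotheses of Handelman's theorem fit Pólya exactly is that the positivity of $F$ on the two coordinate axes of $\R_{\geq 0}^2$ corresponds precisely to the positivity of $\lc_{+}(f)$ and $\lc_{-}(f)$, so the ``leading coefficient'' hypothesis is neither superfluous nor overly strong. If a self-contained argument is desired, Pólya's theorem in two variables can be proved by expanding $(X + Y)^N F(X, Y)$ via the multinomial theorem and using uniform positivity of $F$ on the compact simplex $\{(x, y) : x, y \geq 0, x + y = 1\}$ to dominate the potentially negative contributions when $N$ is large.
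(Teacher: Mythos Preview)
Your argument is correct. Note, however, that the paper does not prove this statement at all: it is quoted as Handelman's Theorem with citations to \cite{de2001handelman, handelman1985positive} and used as a black box to derive the vector-valued corollary (Theorem~\ref{cor:Handelman}). So there is no ``paper's own proof'' to compare against.

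That said, your route via P\'olya's theorem is a clean and standard way to obtain the univariate Laurent case. The reduction is exactly right: shifting by $X^{-p}$ turns the Laurent polynomial into an ordinary polynomial, and homogenizing to $F(X,Y)$ converts the three hypotheses --- positivity on $\Rpp$, $\lc_+(f)>0$, $\lc_-(f)>0$ --- into strict positivity of $F$ on the closed first quadrant minus the origin, which is precisely the hypothesis P\'olya needs. Your observation that the two leading-coefficient conditions correspond exactly to positivity of $F$ on the two coordinate axes is the crux of why the equivalence is tight. Dehomogenizing by $Y=1$ is harmless because $(X+Y)^N F(X,Y)$ is homogeneous, so each monomial in $X$ after substitution comes from a unique bihomogeneous monomial and no cancellation of signs can occur.

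One minor remark: you conclude $(X+1)^N \tilde f \in \R_{\geq 0}[X]$, hence $g=(X+1)^N$ works. Strictly, $\A^+$ requires the product to be nonzero, but this is immediate since neither factor vanishes. Also, P\'olya in fact gives \emph{strictly} positive coefficients for all monomials of the correct degree, which is stronger than you need; your weaker phrasing is fine.
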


\corHandelman*
\begin{proof}
    If there exists $g \in \A^+$ such that $g \cdot \bff \in \left(\A^+\right)^{n}$, then obviously (i) and (ii) are satisfied.

    One the other hand, let $\bff = (f_1, \ldots, f_{n}) \in \A^{n}$ satisfying (i) and (ii).
    By Handelman's theorem (Theorem~\ref{lem:Handelman}), there exist $g_1, \ldots, g_{n} \in \A^{+}$ such that $f_1 g_1 \in \A^+, \ldots, f_{n} g_{n} \in \A^+$.
    Let $g \coloneqq g_1 g_2 \cdots g_{n}$, then $g \cdot \bff \in \left(\A^+\right)^{n}$.
\end{proof}

\proplocglob*
\begin{proof}
    If $\mM$ contains an element $\bff = (f_S, f_K, \cdots) \in \left(\A^+\right)^{n}$ with $\deg_+(f_S) + 1 \geq \deg_+(f_K)$ and $\deg_-(f_S) \leq \deg_-(f_K)$, then simply take $\bff_r = \bff_{\infty} = \bff_{0} = \bff$ for all $r$:
    Equation~\eqref{eq:locr} is satisfied for all $r$ as well as \eqref{eq:loc+} and \eqref{eq:loc-}; hence all three conditions are satisfied.

    Consider the non-trivial direction of implication: let $\bff_{end}, \bff_C \in \mM$ be the elements obtained respectively in Lemma~\ref{lem:fend} and \ref{lem:fC}.
    Define the  polynomial
    \[
    q \coloneqq \frac{1}{2c}(X + X^{-1}) \in \RX.
    \]
    Let $\epsilon > 0$ be such that 
    \begin{equation}\label{eq:ineqepapp}
        \epsilon \cdot \bff_{end}(x) + \bff_{C}(x) \in \Rpp^{n}
    \end{equation}
    for all $x \in C$.
    Such a $\epsilon$ exists by the compactness of $C$.
    We claim that there exists $N \in \N$, the vector $\bff \coloneqq \epsilon q^N \cdot \bff_{end} + \bff_{C}$ satisfies Conditions~(i) and (ii) in Corollary~\ref{cor:Handelman} simultaneously.

    Let $M \in \N$ be such that $\deg_+(f_{end, i}) + M > \deg_+(f_{C, i})$ and $\deg_-(f_{end, i}) - M < \deg_-(f_{C, i})$ for every coordinate $i = S, K, \cdots$.
    Let $\bg \coloneqq \epsilon q^M \cdot \bff_{end} + \bff_{C}$.
    Then we have $\lc_+(\bg) = \lc_+(\bff_{end}) \in \Rpp^{n}$ and $\lc_-(\bg) = \lc_-(\bff_{end}) \in \Rpp^{n}$, as well as
    \[
    \deg_{+}\left(g_{S}\right) + 1 = \deg_{+}\left(f_{end,S}\right) + M + 1 \geq \deg_{+}\left(f_{end,K}\right) + M = \deg_{+}\left(g_{K}\right),
    \]
    \[
    \deg_{-}\left(g_{S}\right) = \deg_{-}\left(f_{end, S}\right) - M \leq \deg_{-}\left(f_{end, K}\right) - M = \deg_{-}\left(g_{K}\right).
    \]    
    Therefore, there exists another compact set $[1/d, d] \supset C$ such that $\bg(x) \in \Rpp^{n}$ for all $x \in \Rpp \setminus [1/d, d]$.
    Since $[1/d, d]$ is compact, there exists $N > M$ such that
    \begin{equation}\label{eq:ineq2Napp}
    \epsilon f_{end, i}(x) \cdot 2^{N} + f_{C, i}(x) > 0
    \end{equation}
    for all $i = S, K, \cdots,$ and all $x \in [1/d, d]$.
    We prove that for this $N$, the vector $\bff \coloneqq \epsilon q^N \cdot \bff_{end} + \bff_{C}$ satisfies Equations Conditions~(i) and (ii) in Corollary~\ref{cor:Handelman}  simultaneously.

    Fix any coordinate $i = S, K, \cdots$.
    For every $x \in \Rpp \setminus [1/d, d]$, we have
    \[
    f_{i}(x) \coloneqq \epsilon q(x)^N \cdot f_{end, i}(x) + f_{C, i}(x) \geq \epsilon q(x)^M \cdot f_{end, i}(x) + f_{C, i}(x) = g_{i}(x) > 0.
    \]
    
    For every $x \in [1/d, d] \setminus C$,
    \[
    f_{i}(x) = \epsilon f_{end, i}(x) \cdot \left(\frac{x + x^{-1}}{2c}\right)^N + f_{C, i}(x) \geq \epsilon f_{end, i}(x) \cdot 2^N + f_{C, i}(x) > 0
    \]
    by Inequality~\eqref{eq:ineq2Napp} and $x + x^{-1} > 4c$.
    
    For every $x \in C \setminus [1/c, c]$,
    \[
    f_{i}(x) = \epsilon q(x)^N \cdot f_{end, i}(x) + f_{C, i}(x) > 0
    \]
    since $f_{end, i}(x) > 0$ and $f_{C, i}(x) > 0$.
    
    For every $x \in [1/c, c]$,
    \[
    f_{i}(x) = \epsilon f_{end, i}(x) \cdot \left(\frac{x + x^{-1}}{2c}\right)^N + f_{C, i}(x) \geq \min\{\epsilon f_{end, i}(x), 0\} + f_{C, i}(x) > 0
    \]
    by Inequality~\eqref{eq:ineqepapp} and $x + x^{-1} < 2c$.
    Therefore, for every $x \in \Rpp$, we have $f_i(x) > 0$.
    In other words, $\bff$ satisfies Conditions~(i) in Corollary~\ref{cor:Handelman} .
    
    Furthermore, since $N > M$ we have $\deg_+(f_{end, i}) + N > \deg_+(f_{C, i})$ and $\deg_-(f_{end, i}) - N < \deg_-(f_{C, i})$ for every coordinate $i = S, K, \cdots$.
    Hence $\lc_+(\bff) = \lc_+(\bff_{end}) \in \Rpp^{n}$ and $\lc_-(\bff) = \lc_-(\bff_{end}) \in \Rpp^{n}$.
    Therefore, $\bff$ satisfies Conditions~(ii) in Corollary~\ref{cor:Handelman}, and
    \[
    \deg_{+}\left(f_{S}\right) + 1 = \deg_{+}\left(f_{end,S}\right) + 1 \geq \deg_{+}\left(f_{end,K}\right) = \deg_{+}\left(f_{K}\right)
    \]
    and
    \[
    \deg_{-}\left(f_{S}\right) = \deg_{-}\left(f_{end, S}\right) \leq \deg_{-}\left(f_{end, K}\right) = \deg_{-}\left(f_{K}\right).
    \]    
    
    Therefore, by Corollary~\ref{cor:Handelman}, we have find $g \in \A^+$ such that $g \bff \in \left(\A^+\right)^{n}$.
    We have at the same time $g \bff \in \mM$ as well as $\deg_+(g f_S) + 1 \geq \deg_+(g f_K)$ and $\deg_-(g f_S) \leq \deg_-(g f_K)$.
    We have thus found the required element $g \bff$.
\end{proof}

\begin{figure}[ht]
    \includegraphics[width=1.0\textwidth,height=1.0\textheight,keepaspectratio, trim={0cm 0cm 0cm 0cm},clip]{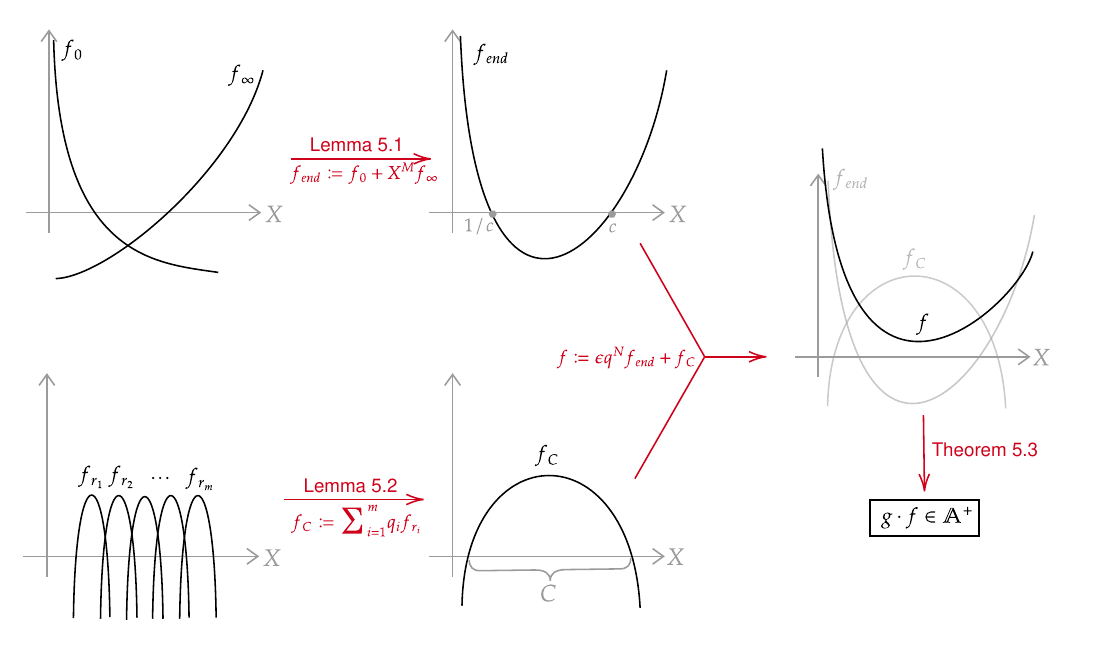}
        \caption{Proof of Proposition~\ref{prop:locglob}, illustrated at dimension $n = 1$.}
        \label{fig:locglob}
\end{figure}

\lemposd*
\begin{proof}
    Suppose there exists $\bff \in \mM$ with $\bff(r) \in \Rpp^{n}$.
    Write $\bff = \phi_1 \bg_1 + \cdots + \phi_m \bg_m$ with $\phi_1, \ldots, \phi_m \in \A$.
    Take $r_i \coloneqq \phi_i(r), i = 1, \ldots, m$, then $r_1 \bg_1(r) + \cdots + r_m \bg_m(r) = \phi_1(r) \bg_1(r) + \cdots + \phi_m(r) \bg_m(r) \in \Rpp^{n}$.

    Suppose there exists real numbers $r_1, \ldots, r_m \in \R$ such that $r_1 \bg_1(r) + \cdots + r_m \bg_m(r) \in \Rpp^{n}$.
    Then taking $\bff \coloneqq r_1 \bg_1 + \cdots + r_m \bg_m \in \mM$, we have $\bff(r) \in \Rpp^{n}$.
\end{proof}

\lemlctoin*
\begin{proof}
    Without loss of generality suppose $*$ is the $+$ sign, otherwise simply change the variable $X$ to $X^{-1}$ and the same argument stays valid.
    
    \textbf{(i) $\implies$ (ii).}
    Suppose $\bff = (f_S, f_K, f_3, f_4, \cdots, f_{n}) \in \mM$ satisfy Condition~\eqref{eq:fa}.
    Let 
    \[
    \balpha = (\alpha_S, \alpha_K, \cdots) \coloneqq \left(- \deg_+(f_S), - \deg_+(f_K), - \deg_+(f_3), \ldots, - \deg_+(f_n) \right).
    \]
    Then $\alpha_S - \alpha_K = - \deg_+(f_S) + \deg_+(f_K) \leq a$, and $m_{*, \balpha}(\bff) = \deg_+(f_S) + \alpha_S = \deg_+(f_K) + \alpha_K = \cdots = \deg_+(f_n) + \alpha_n = 0$, so
    \[
    \init_{+, \balpha}(\bff) = \left(\init_+(f_S), \ldots, \init_+(f_K) \right).
    \]
    Therefore $\coef\left(\init_{+, \balpha}(\bff)\right) = \lc_+(\bff) \in \Rpp^{n}$.

    \textbf{(ii) $\implies$ (i).}
    Suppose there exists $\balpha = (\alpha_S, \alpha_K, \cdots) \in \Z^{n}$ with $\alpha_S - \alpha_K \leq a$, as well as $\bff = (f_S, f_K, f_3, f_4, \cdots, f_n) \in \mM$ that satisfy
    \[
    \coef\left(\init_{*, \balpha}(\bff)\right) \in \Rpp^{n}.
    \]
    In other words, we have
    \[
    \deg_+(f_S) + \alpha_S = \deg_+(f_K) + \alpha_K = \cdots = \deg_+(f_n) + \alpha_n = m_{*, \balpha}(\bff)
    \]
    and
    \[
    \lc_+(\bff) \in \R_{>0}^{n}.
    \]
    The inequality $\deg_{+}\left(f_S\right) + a \geq \deg_{+}\left(f_K\right)$ follows from $\deg_+(f_S) + \alpha_S = \deg_+(f_K) + \alpha_K$ and $\alpha_S - \alpha_K \leq a$.
\end{proof}

\corr*
\begin{proof}
    Let $\bff \in \mM$ with $\coef \left(\init_{*, \balpha}(\bff)\right) \in \Rpp^n$.
    By Lemma~\ref{lem:superGB}, $\init_{*, \balpha}(\bff) = \sum_{i = 1}^m p_i \cdot \init_{*, \balpha}(\bg_i)$ for some $p_1, \ldots, p_m \in \A$.
    Then let $M \coloneqq \{i \mid \deg_*(p_i) + m_{*, \balpha}(\bg_i) = m_{*, \balpha}(\bff)\}$.
    Without loss of generality suppose the initial terms in the sum does not vanish, that is,
    \[
    \sum_{i \in M} \init_*(p_i) \cdot \init_{*, \balpha}(\bg_i) \neq 0.
    \]
    Otherwise we can replace $p_i$ with $p_i - \init_*(p_i)$.
    Since $\sum_{i \in M} \init_*(p_i) \cdot \init_{*, \balpha}(\bg_i)$ does not vanish, we have
    \[
    \init_{*, \balpha}(\bff) = \sum_{i \in M} \init_*(p_i) \cdot \init_{*, \balpha}(\bg_i).
    \]
    Taking respective coefficient of each entry yields 
    \[
    \sum_{i \in M} \lc_*(p_i) \cdot \coef\left(\init_{*, \balpha}(\bg_i) \right) = \coef \left(\init_{*, \balpha}(\bff)\right) \in \Rpp^n.
    \]
    We then take $r_i \coloneqq \lc_*(p_i)$ for $i \in M$ and $r_i \coloneqq 0$ for $i \not\in M$ to conclude for the first implication.

    For the second implication, let $r_1, \ldots, r_m \in \R$ be such that $\sum_{i = 1}^m r_i \cdot \coef(\init_{*, \balpha}(\bg_i)) \in \Rpp^{n}$.
    Take 
    \begin{equation}\label{eq:ff}
    \bff \coloneqq \sum_{i = 1}^m r_i X^{- m_{*, \balpha}(\bg_i)} \cdot \bg_i.
    \end{equation}
    We have, for all $i = 1, \ldots, m$,
    \[
    m_{*, \balpha}\left(r_i X^{- m_{*, \balpha}(\bg_i)} \cdot \bg_{i}\right) = 0.
    \]
    Since $\sum_{i = 1}^m r_i \cdot \coef(\init_{*, \balpha}(\bg_i)) \in \Rpp^{n}$, we have
    \[
    \sum_{i = 1}^m \init_{*, \balpha}\left(r_i X^{- m_{*, \balpha}(\bg_i)} \cdot \bg_i\right) \neq 0.
    \]
    Therefore
    \[
    \sum_{i = 1}^m \init_{*, \balpha}\left(r_i X^{- m_{*, \balpha}(\bg_i)} \cdot \bg_i\right) = \init_{*, \balpha}\left( \sum_{i = 1}^m  r_i X^{- m_{*, \balpha}(\bg_i)} \cdot \bg_i\right) = \init_{*, \balpha}(\bff).
    \]
    Furthermore, at each coordinate $\init_{*, \balpha}(\bff)$ and $\init_{*, \balpha}\left(r_i X^{- m_{*, \balpha}(\bg_i)} \cdot \bg_i\right)$ have the same degree for all $i$.
    Taking the coefficient yields $\coef(\init_{*, \balpha}(\bff)) = \sum_{i = 1}^m r_i \cdot \coef(\init_{*, \balpha}(\bg_i)) \in \Rpp^{n}$.
\end{proof}

\lemfinalpha*
\begin{proof}
    We show that for each $i = 1, \ldots, m$, there is only a finite number of possible initial elements $\init_{*, \balpha}(\bg_i)$ when $\alpha$ varies.
    Without loss of generality, suppose $*$ is the $+$ sign.
    Write $\bg_i = (g_{i, S}, g_{i, K}, \cdots, g_{i, n})$ and $\balpha = (\alpha_S, \alpha_K, \cdots, \alpha_n)$.
    Then $\init_{*, \balpha}(\bg_i)$ depends solely on the order of the integers $\alpha_S + \deg_+(g_{i, S}), \alpha_K + \deg_+(g_{i, K}), \ldots, \alpha_n + \deg_+(g_{i, n})$.
    There is only a finite number of possible orders (including equality).
    For each such order one can compute a polyhedron $\mC \subset \R^{n}$ with rational slopes that contains all $\balpha$ leading to this order.
    Doing this for all $i = 1, \ldots, m$ and taking all possible intersections of these polyhedra yields a finite number of polyhedra.
    For each one of these polyhedra $\mC'$, all $\balpha \in \mC'$ lead to the same $\init_{*, \balpha}(\bg_1), \ldots, \init_{*, \balpha}(\bg_m)$.
    Consider all polyhedra containing an integer point, choosing an integer point $\balpha_{\mC'}$ in each of these polyhedra yields Lemma~\ref{lem:finalpha}.
\end{proof}

\section{Procedure for deciding Group Problem in $\Z \wr \Z$.}\label{app:alg}
In this section of the appendix we give the procedure for deciding Group Problem in $\Z \wr \Z$.
The justification of each step is given in parentheses with reference to the corresponding lemmas or propositions.
\begin{algorithm}[!ht]
\caption{Algorithm for deciding Group Problem in $\Z \wr \Z$}
\label{alg:gp}
\begin{description} 
\item[Input:] 
A finite set $\mG = \{(y_a, b_a) \mid a \in A\}$ in $\Z \wr \Z$.
\item[Output:] \textbf{True} or \textbf{False}.
\end{description}
\begin{enumerate}[Step 1:]
    \item Define the index sets $I, J, K$ as in \eqref{eq:defijk}.
    \item If $I = \emptyset$ or $J = \emptyset$, perform the following.
    (c.f.\ Proposition~\ref{prop:easy})
    \begin{enumerate}
        \item If $I$ and $J$ are not both empty, return \textbf{False}.
        \item If $I = J  = \emptyset$.
        Using integer programming, decide whether $\sum_{k \in K} n_k y_k = 0$ for some $n_k \in \Zpp$. If yes, return \textbf{True}, otherwise return \textbf{False}.
    \end{enumerate}
    \item If $I \neq \emptyset$ and $J \neq \emptyset$, find all double-full sets $S \subset I \times J$. Le $\mS$ be the set of all such $S$.
    \item\label{step:eachS} For each $S \in \mS$, perform the following.
    \begin{enumerate}
        \item Initialize three boolean variables $B_r, B_{\infty}, B_{0}$ as $false$.
        \item For each $(i, j) \in S$, compute $h_{(i,j)}$ as defined in \eqref{eq:defhij}.
        Then, compute $h_{(i,j), m}, y_{k, m}$ for all $(i, j) \in S, k \in K, m = 0, \ldots, d-1$ as defined in \eqref{eq:hsys} and \eqref{eq:ysys}. (c.f.\ Proposition~\ref{prop:wrtoeq}, Corollary~\ref{cor:sys})
        \item Compute a basis $\bg_1, \ldots, \bg_{m'}$ for the module $\mM$ defined in \eqref{eq:defM}.
        \item Decide whether the sentence~\eqref{eq:real} is true using Tarski's theorem. If true, set $B_r \coloneqq true$. (c.f.\ Proposition~\ref{prop:declcd})
        \item Compute a super Gr\"{o}bner basis $\bg_1, \ldots, \bg_m$ for $\mM$.
        \item Let $* = +$, compute $\balpha_1, \ldots, \balpha_p \in \Z^{n}$ defined in Lemma~\ref{lem:finalpha}.
        \item For each of these $\balpha$, decide whether there exist $r_1, \ldots, r_m \in \R$ such that $\sum_{i = 1}^m r_i \cdot \coef\left(\init_{+, \balpha}(\bg_i)\right) \in \Rpp^{n}$. If this is true for some $\balpha$, set $B_{\infty} = true$. (c.f.\ Proposition~\ref{prop:declcinf})
        \item\label{substep:-} Let $* = -$, compute $\balpha_1, \ldots, \balpha_p \in \Z^{n}$ defined in Lemma~\ref{lem:finalpha}.
        \item For each of these $\balpha$, decide whether there exist $r_1, \ldots, r_m \in \R$ such that $\sum_{i = 1}^m r_i \cdot \coef\left(\init_{-, \balpha}(\bg_i)\right) \in \Rpp^{n}$. If this is true for some $\balpha$, set $B_{0} = true$. (c.f.\ Proposition~\ref{prop:declcinf})
        \item If $B_r, B_{\infty}, B_{0}$ are all $true$ at this moment, return \textbf{True}. Otherwise continue to the next $S$. (c.f.\ Proposition~\ref{prop:locglob})
    \end{enumerate}
    \item If the procedure did not return true for any $S$ in Step~\ref{step:eachS}, then return \textbf{False}.
\end{enumerate}
\end{algorithm}

\end{document}